\def\epsilon{\varepsilon}
\def\bp{\pi}
\def\Rf{{\cal R}_f}
\def\br^2{{\Bbb R}^2}
\def\gr{}
\def\cT{{\cal T}}
\newcommand{\doubletilde}[1]{\accentset{\approx}{#1}}
\renewcommand*{\VertexBallColor}{green!60}
\renewcommand*{\VertexTextColor}{black}
\tikzset{
% LabelStyle/.style = { rectangle, rounded corners, draw, minimum width = 2em, fill = yellow!50, text = black, font = \bfseries },
 LabelStyle/.style = { rectangle, 
            rounded corners, 
            draw, 
            minimum width = 10em, 
            fill = yellow!50, 
            text = black, },
            %font = \bfseries },
 VertexStyle/.style = { inner sep=3pt, 
             minimum size = 5pt,
             shape     = \VertexShape,
             ball color   = \VertexBallColor,
             color     = \VertexLineColor,
             inner sep   = \VertexInnerSep,
%            outer sep   = \VertexOuterSep,
             outer sep   = 0.5\pgflinewidth,
             text      = \VertexTextColor,
             minimum size  = \VertexSmallMinSize,
             line width   = \VertexLineWidth},
% VertexStyle/.append style = { inner sep=1pt, minimum size = 13pt},
% EdgeStyle/.append style = {->, bend left=45,double=yellow}
 EdgeStyle/.append style = {->, 
               double=yellow, 
               color=orange}
}
\newcommand{\inter}[1]{%
 {\kern0pt#1}^{\mathrm{o}}%
}
\newcommand{\blue}{\textcolor{blue}}
\newcommand{\red}{\textcolor{red}}
\newcommand{\violet}{\textcolor{black}}
\newcommand{\magenta}{\textcolor{black}}
\newcommand{\teal}{\textcolor{black}}
\newcommand{\olive}{\textcolor{black}}
\definecolor{jim}{rgb}{1,.4,0}
\definecolor{jim2}{rgb}{0,.6,0}
\newcommand{\allred}{\color{red}{}}
\newcommand{\allblack}{\color{black}{}}
\newcommand{\allviolet}{\color{black}{}}
\newcommand{\allteal}{\color{black}{}}
\newcommand{\allmagenta}{\color{black}{}}
\newcommand{\eg}{{\it{e.g.}}}
\def\J{J}
\def\Jall{\J_{all}}
\def\Jall{J^{int}}
\newlength{\dhatheight}
\newcommand{\beq}{\begin{linenomath}\begin{equation}} %begin align
\newcommand{\eeq}{\end{equation}\end{linenomath}} %end align
\def\bydef{{\buildrel \rm def \over =}}
\def\phi{\varphi}
\def\cK{{\cal K}}
\def\cR{{\cal R}}
\def\cS{{\cal S}}
\newtheorem{definition}{Definition}
\newtheorem{notation}{Notation}
\newtheorem{theorem}{Theorem}
\newtheorem*{thmA}{Theorem A}
\newtheorem{plc}{Plc}[section]
\newtheorem{theoremX}{Theorem}
\newtheorem{lemma}[plc]{Lemma}
\newtheorem{corollary}[plc]{Corollary}
\newtheorem{example}{Example}[section]
\newtheorem{proposition}[plc]{Proposition}
\title{
 Graph and backward asymptotics\\ of the tent map
}
\author{Ana Anu\v{s}i\'{c}\\ \small Nipissing University, North Bay, ON (Canada)\\ \\ Roberto De Leo\\ \small Howard University, Washington DC 20059 (USA)}
\pgfplotsset{compat=1.16} 
\begin{document}
\maketitle

\begin{abstract}
    The tent \teal{map family} is arguably the simplest \teal{1-parametric} family of maps with non-trivial dynamics and it is still an active subject of research. 
%    (e.g. see~\cite{BCH17}).
    In recent works
%    ~\cite{DLY20,DL22} 
    the second author, jointly with J. Yorke, studied the graph and backward limits of S-unimodal maps. In this article we generalize those results to tent-like unimodal maps. By tent-like here we mean maps that share fundamental properties that characterize tent maps, namely \teal{unimodal maps without wandering intervals nor attracting cycles and whose graph has a finite number of nodes.} 
%    \blue{A thought: should we say "not infinitely renormalizable" in the abstract, since that is probably wider known than the "nodes of a graph"?} 
%    \red{There are two reasons why I'd rather not. The first is that this is, ideally, the fourth paper in a series dedicated to the graph of a dynamical system and so the reader is kind of supposed to know about graphs \& nodes :) . 
%   The second is that I am not fond of the whole 'renormalization' concept and I'd rather avoid mentioning it unless strictly necessary.} \blue{Okay :)}
\end{abstract}
\section{Introduction}
%

%As explained in length in the introduction of~\cite{DLY20}, 
The qualitative dynamics of continuous and discrete dynamical systems can be encoded into a graph. 
The original idea goes back to S. Smale. 
In~\cite{Sma67}, he proved 
%a Spectral Decomposition of Diffeomorphisms Theorem where he showed 
that the non-wandering set $\Omega_f$ of a Axiom-A diffeomorphism $f$ is the union of disjoint, closed, invariant  indecomposable %\blue{what is indecomposable here?} 
subsets $\Omega_i$ on each of which $f$ is transitive.
Based on this decomposition, he associated to each such $f$ a directed graph whose nodes are the $\Omega_i$ and there is an edge from $\Omega_i$ to $\Omega_j$ if and only if $W^s(\Omega_i)\cap W^u(\Omega_j)\neq\emptyset$, where $W^s$ and $W^u$ denote, respectively, the stable and unstable manifolds.

A shortcoming of Smale's construction above is that it does not extends well to more general settings. 
In particular, when the non-wandering set is not hyperbolic, the $\Omega_i$ are not necessarily disjoint and therefore the graph is not well-defined. 
This happens, for instance, in case of the logistic map at the endpoints of any window of the bifurcation diagram.
Perhaps this is the reason why, while a Spectral Decomposition Theorem for unimodal maps was established long ago in several versions by several authors~\cite{JR80,vS81,HW84,BL91,Blo95}, the graph component of Smale's construction was not pursued.

A natural fix for this problem is replacing the non-wandering set by the chain-recurrent set $\cR_f$ introduced by C.~Conley in~\cite{Con78}. 
The set $\cR_f$ has two strong advantages over $\Omega_f$. 
First, $\cR_f$ has a built-in natural equivalence relation that decomposes it naturally into equivalence classes $N_i$ (that we call nodes) that are closed and invariant under $f$.
This is the analogue of the Smale's decomposition of $\Omega_f$ in the $\Omega_i$, with the advantage that the $N_i$ are always pairwise disjoint. 
Moreover, chain-recurrence is the widest possible definition of recurrence, as proved first by Conley himself in case of continuous compact dynamical systems and later by several other authors in more general cases, including discrete~\cite{Nor95b} and infinite-dimensional systems~\cite{Ryb87}: either a point is chain-recurrent, so that its orbit is contained within one of the nodes $N_i$, or its orbit asymptotes forward to some $N_i$ and, if it has backward orbits, each of its backward orbits asymptotes to some node $N_j$ with $j\neq i$. 

In~\cite{DLY20}, J. Yorke and the second author associated to any given continuous map $f$ a directed graph $\Gamma_f$ whose nodes are the equivalence classes $N_i$ of its chain-recurrent set and such that there is an edge from node $N_i$ to node $N_j$ if, arbitrarily close to $N_i$, there are points that asymptote forward to $N_j$ under $f$.
Notice that $\Gamma_f$ is always acyclic.
%define the graph of a dynamical system as 
%is an acyclic directed graph, namely the graph has no loops and each of its edges is defined by an ordered pair of nodes. 
%Each node of the graph is an equivalence class of chain-recurrent points %(see~\cite{DLY20} for details) 
%and there is an edge from node $A$ to node $B$ if, arbitrarily close to $A$, there are points that asymptote forward to $B$.
In the same article, 
%In~\cite{DLY20}, J. Yorke and the second author 
the authors proved that the graph of a S-unimodal map is a tower, namely that all $p+1$ nodes $N_i$ of its graph (possibly countably infinitely many) can be sorted in a linear order so that, for each $j>i$, there are points arbitrarily close to node $N_i$ that asymptote to $N_j$. 
Notice that, in this notation, $N_0$ is the fixed boundary point and $N_p$ is the attractor. 
\allblack

In ~\cite{DL22}, the second author used the results of~\cite{DLY20} to study the $s\alpha$-limits in S-unimodal maps. The most relevant result for our goals is the following theorem, that we state below for maps whose attractor is not of type $A_5$ (see Prop.~7 in~\cite{DLY20} and Prop.~\ref{prop:A}). In case of the logistic map family, for instance, this means that the map is not at the right endpoint of any window in the bifurcation diagram. %theorem, stated here in a slightly simplified form (see Thm.~D. 
\begin{theoremX}[$s\alpha$-limits of S-unimodal maps~\cite{DL22}]\label{thm:A}
%  \blue{Should it maybe be $V_0=[0,f^2(c))$? What about case $A_5$?} 
    Let $f$ be a S-unimodal map with $p+1$ nodes \violet{$N_i$}, $1\leq p<\infty$, and whose attractor $N_p$ is not of type $A_5$.
    Then there are $p+1$ closed sets $V_i$ such that:
    \begin{enumerate}
    \item $V_0=[0,f(c)]$;
    \item $V_p=N_p$;
    \item if $x\in V_p$, then $s\alpha(x)=\cup_{k=0}^p N_k$;
    \item for $i<p$,
      \begin{itemize}
        \item $V_{i+1}\subset V_{i}$;
        \item $N_i\subset V_i\setminus V_{i+1}$;
        \item if $x\in V_i\setminus V_{i+1}$, then $s\alpha(x)=\cup_{k=0}^i N_k$.
      \end{itemize}
    \end{enumerate}
\end{theoremX}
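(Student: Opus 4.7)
The plan is to exploit the renormalization structure of S-unimodal maps with finitely many nodes to build the filtration $\{V_i\}$ explicitly, then use the tower structure of $\Gamma_f$ established in \cite{DLY20} to compute $s\alpha(x)$ on each stratum.

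For the construction, I would invoke the spectral decomposition to obtain a nested sequence of restrictive intervals $J_0\supset J_1\supset\cdots\supset J_p$ with $J_0=[0,f(c)]$ and periods $1=n_0\mid n_1\mid\cdots\mid n_p$ satisfying $f^{n_i}(J_i)\subset J_i$, and set $V_i=\bigcup_{k=0}^{n_i-1}f^k(J_i)$. This gives closed forward-invariant sets; nesting $V_{i+1}\subset V_i$ follows from $J_{i+1}\subset J_i$ combined with $f^{n_i}(J_i)\subset J_i$; and the endpoints $V_0=[0,f(c)]$ and $V_p=N_p$ come from $J_0=[0,f(c)]$ and from the fact that the innermost (non-renormalizable) level has its attractor equal to the orbit of $J_p$. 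Each node $N_i$ for $i<p$ is associated with the repelling periodic orbit on $\partial J_i$, hence $N_i\subset V_i\setminus V_{i+1}$.

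For the $s\alpha$-limits, the inclusion $s\alpha(x)\subset\bigcup_{k=0}^iN_k$ for $x\in V_i\setminus V_{i+1}$ is the easy direction: forward invariance of $V_{i+1}$ implies that no backward orbit of $x$ can ever enter $V_{i+1}$, so its $\alpha$-limit is disjoint from the higher nodes $N_{i+1},\ldots,N_p\subset V_{i+1}$. The reverse inclusion requires a pullback argument. The tower property of \cite{DLY20} says that for each $k\leq i$ there are points arbitrarily close to $N_k$ whose forward orbit asymptotes to $N_j$ for any $j>k$; dualizing this via appropriately chosen inverse branches of $f$, one extracts, for any $x\in V_i\setminus V_{i+1}$, preimages accumulating at $N_k$, which assemble into a backward orbit contributing $N_k$ to $s\alpha(x)$. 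Item~(3) is the special case $i=p$: here one adds to the above picture the fact that $f$ is surjective on $N_p$, which yields backward orbits staying inside $N_p$ and thus giving $N_p\subset s\alpha(x)$ as well.

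The hardest step is the lower bound. The tower property is only a local forward statement near each node, and turning it into a single coherent backward orbit from $x$ that accumulates at every lower $N_k$ demands careful control of how the inverse branches of $f$ fit together along the strata of the filtration. The exclusion of $A_5$-type attractors (Prop.~7 of \cite{DLY20}) is precisely what makes these inverse branches behave well at the top of the tower: for an $A_5$ attractor the backward dynamics near $N_p$ degenerates and backward orbits from $V_p$ fail to accumulate at every lower node, so item~(3) would require a different formulation in that case.
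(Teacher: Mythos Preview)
Note first that Theorem~A is not proved in this paper: it is quoted from \cite{DL22} as background. The paper instead proves the T-unimodal analogue (Theorem~\ref{thm:main}) and then remarks that the $V_i$ of Theorem~A arise as $V_i=\bigcup_{j\geq i}U_j$, i.e.\ $V_i=K(N_{i-1})$. So any comparison is against that argument.

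Your proposal has a structural error that makes the construction fail. You identify each repelling node $N_i$ with ``the repelling periodic orbit on $\partial J_i$''. For S-unimodal maps this is false in general: a repelling node can be a Cantor set (the logistic map at any chaotic parameter inside the period-3 window is already an example). When $N_i$ is a Cantor set, it lies in $\Jall(N_{i-1})\setminus\Jall(N_i)$; only the boundary periodic orbit $\gamma(\cT(N_i))$ sits on $\partial\big(\bigcup_k f^k(J_i)\big)$, while the rest of the Cantor set is outside. Hence your $V_i=\bigcup_{k=0}^{n_i-1}f^k(J_i)$ does \emph{not} contain $N_i$, and the clause $N_i\subset V_i\setminus V_{i+1}$ fails. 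The paper avoids this by using the cores $K(N_{i-1})$ (tight trapping regions) rather than the cyclic trapping regions themselves.

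Two further gaps. First, ``dualizing the tower property through inverse branches'' is a heuristic, not an argument: the tower statement is about forward orbits near $N_k$, and concatenating inverse branches into a single backward orbit of $x$ that accumulates on $N_k$ needs a concrete mechanism. The paper supplies one via the absence of homtervals: any one-sided neighbourhood of the periodic boundary point of $J_1(N_k)$ eventually covers $c$, hence the whole next level, which manufactures the needed preimages (the two lemmas preceding Theorem~\ref{thm:main}). Second, for item~(3) surjectivity of $f$ on $N_p$ only gives \emph{some} backward orbit, not one whose accumulation set is all of $N_p$; the paper proves separately (Theorem~\ref{thm:dense}, using topological exactness via the Milnor--Thurston semiconjugacy) that every point of the attractor has a backward orbit dense in it. Your reading of the $A_5$ exclusion is also off: the problem there is not that backward orbits miss the lower nodes, but that $N_p$ then contains wandering points of the basin for which $s\alpha$ cannot recover the attractor $A$ itself.
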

In short, this means that points close enough to $N_i$ have $s\alpha$-limit equal to the union of all nodes from $N_0$ to $N_i$. 

\medskip
\olive{
In the present article we extend the results above to a wide class of less regular unimodal maps.
We drop smoothness and only require the following topological properties: continuity; absence of wandering intervals and attracting cycles; finitely many nodes.
We call these maps {\em T-unimodal}.
}

\olive{
The model we have in mind is the tent map $T_s$, \teal{whose graph in $[0,1/2]$ is a segment joining $(0,0)$ with $(1/2,s/2)$ and is symmetric with respect to $x=1/2$ (see Figure~\ref{fig:tent})}.
%$T_s(x)=s\frac{1-|1-2x|}{2},\; x\in[0,1],\; s\in(1,2),$
This map 
is a natural piecewise-linear version of the logistic map and it has been widely studied since the seminal work by Milnor and Thurston~\cite{MT88}.
Since it is not smooth, results from~\cite{DLY20,DL22} do not apply to it.
}
%Since tent maps are not differentiable at their maximum $c=0.5$, though, results on S-unimodal maps do not necessarily apply to them. 

\begin{figure}[!ht]
 	\centering
 	\begin{tikzpicture}[scale=8]
 	\draw[thin] (0,0)--(0,1)--(1,1)--(1,0)--(0,0);
 	\draw[thick] (0,0)--(0.5,1.8/2)--(1,0);
 	\node[below] at (0,0) {\small $0$};
 	\node[below] at (0.5,0) {\small $c=\frac{1}{2}$};  
 	\draw[dashed] (0.5,0)--(0.5,0.9);
 	\node[below] at (1,0) {\small $1$};
 	
 	\node[left] at (0,0) {\small $0$};
 	\node[left] at (0,1.8/2) {\small $\frac{s}{2}$};
 	\draw[dashed] (0.5,0.9)--(0,0.9);
 	\node[left] at (0,1) {\small $1$};
 	
 	\draw[very thin] (0.18,0.18)--(0.18,0.9)--(0.9,0.9)--(0.9,0.18)--(0.18,0.18);
 	\draw[dashed] (0,0)--(1,1);
 	\end{tikzpicture}
 	
 	\caption{The graph of $T_{1.8}$ and its core.}
 	\label{fig:tent}
 \end{figure}
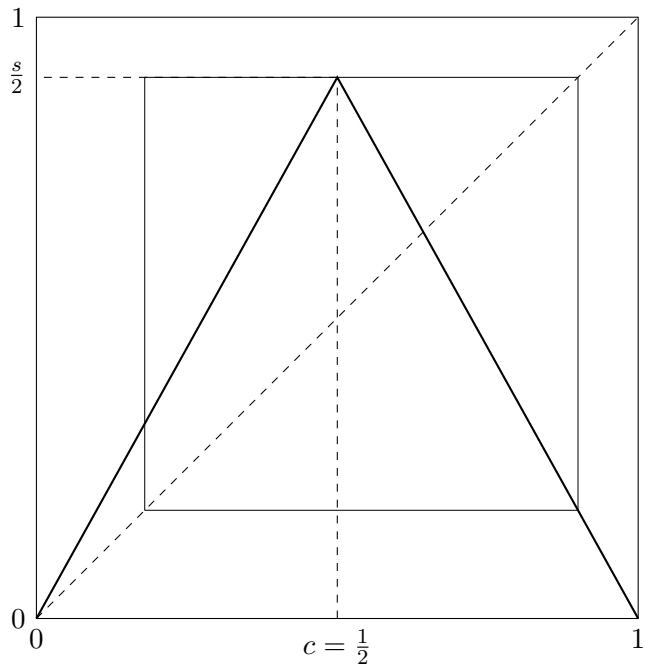

\medskip \noindent
The main results of the present article are the following:
\begin{enumerate}
    \item The graph of a T-unimodal map is a tower (Thm~\ref{thm:tower});
    \item Each point in the attractor $A$ of a T-unimodal map has a backward orbit dense in $A$ (Thm~\ref{thm:dense});
    \item Theorem A holds for T-unimodal maps (Thm.~\ref{thm:main}).
\end{enumerate}
In particular, in case of the tent map, we get the following result.
%
%\begin{enumerate}
    For any $s$ such that  $\log_2 s\in[2^{-p},2^{1-p})$, $p\geq1$, the graph of $T_s$ is a tower of $p+1$ nodes 
%    \olive{The first $p$ nodes are a cascade segment; the last node is equal to the attractor, which is the core of the cyclic trapping region of the last repelling node (Thm~\ref{thm:ttower}).}
    \olive{$N_0, N_1,\dots, N_p$, where: 
    \begin{enumerate}
    \item $N_0$ is the fixed endpoint; 
    \item $N_k$, $1\leq k<p$, is a repelling $2^{k-1}$-cycle; 
    \item the attractor $N_p$ is the core of $N_{p-1}$ (see Def.~\ref{def:Ncore}), namely a cycle of $2^{{p-1}}$ intervals. 
%    \blue{The core is one interval, $N_p$ is rather the core of all renormalizations. Maybe we can just say ``$N_p$ is the attractor of $T_s$, which is a cycle of $2^k$ intervals."}
    \end{enumerate}}
    % \blue{Perhaps too technical for the introduction?}
    % \red{Is this version better? It seems a good idea to include in the intro a statement about the structure of the graph of the tent map, given the title of the paper :)} \blue{It is looking good like this!}
    
%    \item The repelling node $N_i$, $1<i<n-1$, is a cycle of period $2^{n-1}$.
%    \item The attractor $N_{n}$ is the core of ...
%    \item Thm A holds for each tent map $T_s$ with $s>1$, namely with at least two nodes.
%\end{enumerate}

% \medskip
% Notice that all general result of this article actually hold for any $C^0$ unimodal map $f$ with the following properties:
% \begin{enumerate}
%     \item $f$ has no wandering intervals;
%     \item $f$ has no attracting cycles;
%     \item $f$ has a finite number of nodes.
% %    \item $f$ has a single attractor;
% %    \item the restriction of $f$ to the attractor is topologically exact.
% \end{enumerate}
% We call such maps {\em T-unimodal.}
%

%\blue{We apply this to characterize s$\alpha$-limit sets of T-unimodal maps, ... define and state the main theorem.}
%\red{Actually I did that at line 60 here, should we be more emphatic about it?} \blue{Sorry, I somehow missed it... It is okay like this.}

\section{Definitions and basic results}
\label{sec:dr}
A discrete dynamical system on a metric space $(X, d)$ is given by the iterations of a continuous map $f : X \to X$. In this article, we are interested in
the case where $X$ is a closed interval and $f$ is a unimodal map, as defined below.
\begin{definition}
    A $C^0$ map $f:[a,b]\to[a,b]$ is {\em unimodal} if:
    \begin{enumerate}
        \item $f(a)=f(b)=a$ or $f(a)=f(b)=b$;
        \item there is a point $c\in(a,b)$ such that $f$ is strictly increasing (or decreasing) in $[a,c]$ and strictly decreasing (or, respectively, increasing) in $[c,b]$. 
    \end{enumerate}
\end{definition}
In all statements and examples throughout the
article, we will assume that $c$ is a maximum. Of
course, the same proofs hold also when $c$ is a minimum after trivial modifications that we leave to the reader.

One of the simplest examples of unimodal map families is the \violet{family of tent maps}
$$
T_s(x) = s\frac{1-|1-2x|}{2},
$$
with $s\in[0,2]$ and $x\in[0,1]$.
\olive{A way of thinking about it is as  a piece-wise linear version of the logistic map family $\ell_\mu(x)=\mu x(1-x)$, a family of central importance in unimodal dynamics because every S-unimodal map is conjugated to a logistic one. 
The tent map family is the main motivation of this article, since it is not granted a priori that the results for S-unimodal maps mentioned in the introduction extend to this family too.}

\begin{figure}
 \centering
 \includegraphics[width=14cm]{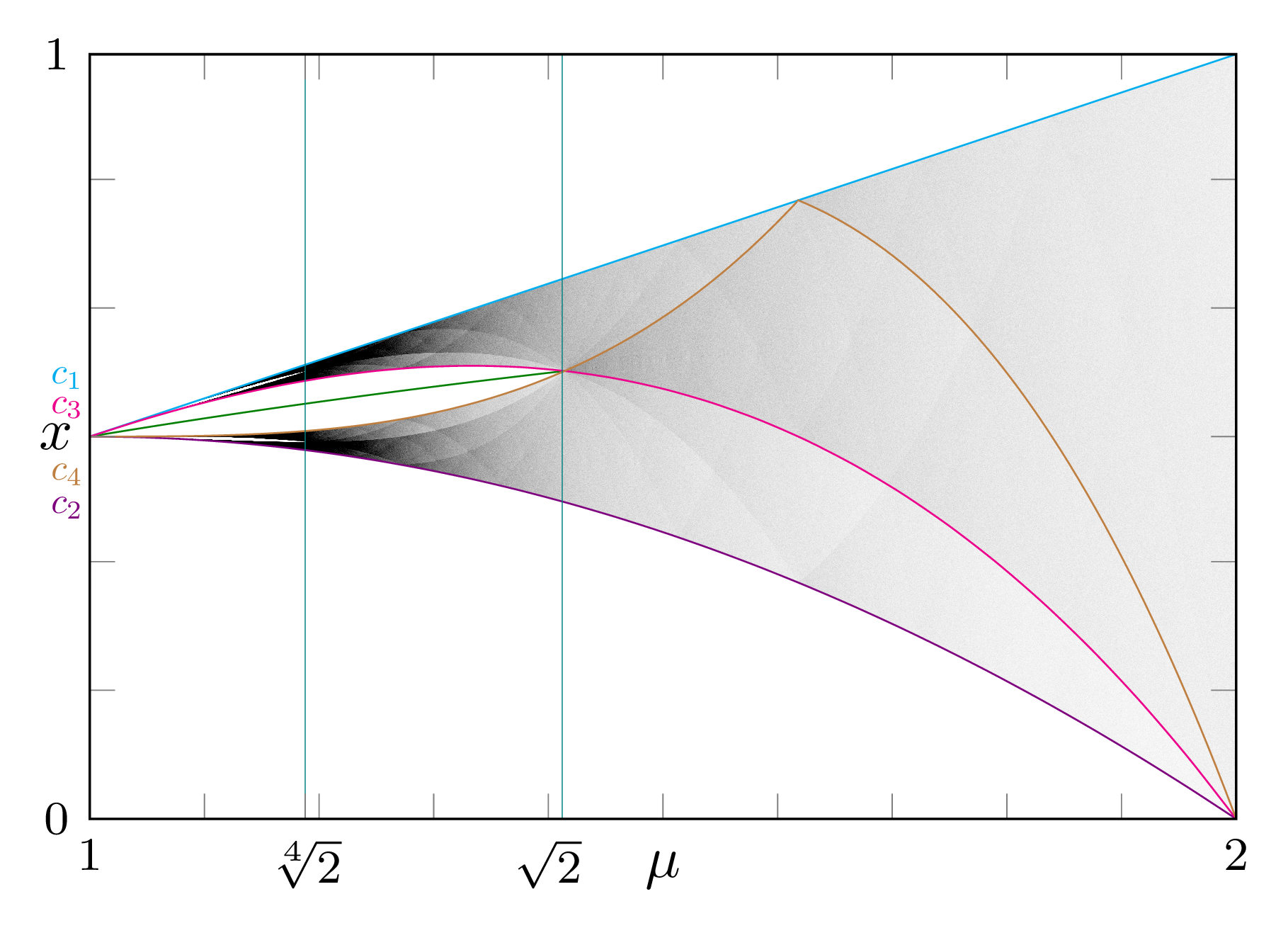}
 \caption{{\em Bifurcation diagram of the tent map.} 
 This picture shows the bifurcation diagram of the tent map $T_s$ in the parameter range $s\in(1,2]$.
 Attractors are painted in shades of gray (depending on the density) and repelling periodic orbits in green. The colored lines labeled by $c_k$ are the lines $T^k_s(c)$, where $c = 0.5$ is the critical point of $T_s$. 
 For $s\in[\sqrt{2},2)$, the attractor is the core of $T_s$, namely the tight trapping region $[c_2,c_1]$, and there is no repelling node besides $N_0=\{0\}$ (for $s=2$, the fixed endpoint merges into the attractor and there is a single node); for $s\in[\sqrt[4]{2},\sqrt{2})$, there is a second repelling node $N_1=\{\bp_s=\frac{s}{s+1}\}$, the internal fixed point of $T_s$, and the attractor is the core of $N_1$, namely the tight trapping region $[c_3,c_1]\cup[c_2,c_4]$.
 %and there are two repelling nodes: the fixed points $x=0$ and $x=s/(s+1)$.
 At non-rigorous level, one can think of this bifurcation diagram as what we get from the one of the logistic map (e.g. see~\cite{DLY20}) by first collapsing to the single point $x=c$ the Cantor set attractor at the Feigenbaum-Myrberg point and then collapsing to a single point in parameter space all windows. The ``ghosts'' of the windows in the tent map diagram are the parameter values for which the critical point is periodic.
}
 \label{fig:tm}
\end{figure} 
\medskip
Notice that, given any unimodal map $f$, the equation $f(x) = r$ has two distinct solutions for each $r<f(c)$. This justifies the following:
\begin{notation}
    For each $p\neq c$ in the domain of $f$, we denote by $\hat p$ the solution different from $p$ of the equation $f(x) = f(p)$. \teal{We say that $p$ and $\hat p$ are conjugated with respect to $f$.}
\end{notation}

\medskip\noindent
{\bf Chain-recurrence and nodes.} \allviolet
The idea of encoding the qualitative behaviour of a dynamical system into a graph goes back to Smale~\cite{Sma67}, in case of Morse functions on a compact manifold. 
In that case, and in general in case of {\em Axiom-A} diffeomorphisms $f$, the nodes of the graph are {\em closed, disjoint, invariant, and indecomposable} subsets of the set of non-wandering points of $f$, that we denote by $\Omega_f$. Recall that a point $x$ is non-wandering for $f$ if, for every neighbourhood $U$ of $x$, there is an integer $n>0$ such that $f^n(U)\cap U\neq\emptyset$. 
%- just to be a bit more clear} (arbitrarily close to $x$, there are points $y$ that come arbitrarily close to themselves under $f$.)

%Even though, in case of T-unimodal maps, the set of non-wandering points coincides with the set of chain-recurrent ones, it is nevertheless useful using chain-recurrence because it makes proofs easier. 
%\allviolet
%We denote by $\Omega_f$ the set of all non-wandering points $f$. For this article it is central the weaker notion of chain-recurrence defined below. 
A decade later, Conley~\cite{Con78} introduced the more general concept of {\em chain-recurrence} and showed that it is more suitable than that of non-wandering point to extend the original idea of Smale to general discrete and continuous dynamical systems.
One of the main reasons for this is the existence of a natural equivalence relation among chain-recurrent points.
\allblack
\begin{definition}[Bowen, 1975~\cite{Bow75}]
  An {\em $\epsilon$-chain} from $x$ to $y$ is a sequence of points $x_0=x,x_1,\dots,x_n,x_{n+1}=y$
  such that $d(f(x_i),x_{i+1})<\epsilon$ for all $i=0,\dots,n$. We say that $x$ is {\bf downstream} (resp. {\bf upstream}
  from $y$) if, for every $\epsilon>0$, there is an $\epsilon$-chain from $y$ to $x$ (resp. from $x$ to $y$).
  A point $x\in X$ is {\bf chain-recurrent} if it is downstream from itself.
\end{definition}
We denote the set of all chain-recurrent points of $X$ under $f$ by $\Rf$ . The relation 
$$
\hbox{$x \sim y$ if and only if $x$ is
both upstream and downstream from $y$} 
$$
is an equivalence relation in $\Rf$ (e.g. see \cite{Nor95b}). 
%We call {\bf nodes }
%
%\begin{proposition}
%    $\Omega_f\subset\Rf$
%  If $x$ is non-wandering, then it is chain-recurrent.
%\end{proposition}
Note that $\Omega_f\subset\Rf$ but, in general, the inverse does not hold, as shown by the example below. 
\allviolet
\begin{example}
  Let $\mu_0\simeq3.857$ be the parameter value of the right endpoint of the period-3 window of the logistic map.
  Set $c_k=\ell_{\mu_0}^k(c)$. 
  Then every point in $[c_2,c_1]$ is chain-recurrent (indeed, $\ell_{\mu_0}$ has only two nodes: $N_0=\{0\}$ and $N_1=[c_2,c_1]$) but the only non-wandering points within $[c_2,c_1]$ are the points of the attractor $A=[c_2,c_5]\cup[c_3,c_6]\cup[c_4,c_1]$ and the points of a repelling Cantor set (see Fig.~5 in~\cite{DL22}, where the Cantor set is painted in red). 
  Notice that, at $\mu=\mu_0$, the attractor is a {\em cyclic} trapping region, at the boundary of which lie the unstable 3-cycle $\{c_4,c_5,c_6\}$. The full intersection between the attractor and the repelling Cantor set is this 3-cycle plus the pre-periodic point $c_3=1-c_6$.
\end{example}
\begin{notation}
  For any $x\in X$, we denote by $\omega_f(x)$ the set of all accumulation points of the orbit of $x$ under $f$.
\end{notation}
\begin{proposition}[Norton, 1995~\cite{Nor95b}]
  Given any $x\in X$, $\omega_f(x)\subset\cR_f$ and $y\sim z$ for each $y,z\in\omega_f(x)$.
%  there exist an equivalence class $N\in\cR_f/\sim$ such that $\omega_f(x)\subset N$.
\end{proposition}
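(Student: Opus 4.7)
The plan is to handle both assertions with a single construction: use a long segment of the orbit of $x$ as a zero-cost ``backbone'' of an $\epsilon$-chain, and control only the opening and closing jumps, using (i) the continuity of $f$ at the target $\omega$-limit point and (ii) the defining property of $\omega_f(x)$, namely that the iterates of $x$ visit every neighbourhood of every $y\in\omega_f(x)$ infinitely often.

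For the first claim, I would fix $y\in\omega_f(x)$ and $\epsilon>0$, and use continuity of $f$ at $y$ to pick $\delta\in(0,\epsilon)$ with $d(w,y)<\delta\Rightarrow d(f(w),f(y))<\epsilon$. Since $y\in\omega_f(x)$, I can then choose indices $n_k<n_\ell$ with $n_\ell-n_k\ge 2$ and both $f^{n_k}(x),f^{n_\ell}(x)$ within $\delta$ of $y$, and form the sequence
\[
y,\ f^{n_k+1}(x),\ f^{n_k+2}(x),\ \dots,\ f^{n_\ell-1}(x),\ y.
\]
This is an $\epsilon$-chain from $y$ to itself: every internal transition has zero gap because $f(f^j(x))=f^{j+1}(x)$; the opening jump equals $d(f(y),f(f^{n_k}(x)))<\epsilon$ by the choice of $\delta$; and the closing jump equals $d(f^{n_\ell}(x),y)<\delta<\epsilon$. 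This shows $y\in\cR_f$.

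For the second claim, given $y,z\in\omega_f(x)$ and $\epsilon>0$, I would again pick $\delta$ via continuity at $y$, then choose $n_k$ with $d(f^{n_k}(x),y)<\delta$ and $n_\ell\ge n_k+2$ with $d(f^{n_\ell}(x),z)<\epsilon$, which is possible since both $y$ and $z$ are accumulation points of the orbit. The sequence $y,\,f^{n_k+1}(x),\,\dots,\,f^{n_\ell-1}(x),\,z$ is then an $\epsilon$-chain from $y$ to $z$, so $y$ is upstream from $z$. Exchanging the roles of $y$ and $z$ in the same construction produces an $\epsilon$-chain from $z$ to $y$, and hence $y\sim z$.

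I do not expect a substantive obstacle; the only mild point of care will be to fix $\delta$ before committing to the orbit indices, and to ensure $n_\ell-n_k\ge 2$ so that the backbone contributes at least one exact orbit step, both of which are automatic from the fact that $\omega_f(x)$ is visited infinitely often.
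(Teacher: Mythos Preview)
The paper does not supply its own proof of this proposition; it simply states the result with a citation to Norton~\cite{Nor95b}. So there is nothing to compare against on the paper's side.

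Your argument is the standard one and is correct. The construction---using a long exact orbit segment as the backbone with only the opening and closing jumps controlled by continuity and the defining property of $\omega_f(x)$---is exactly how this is usually done. The care you take with picking $\delta$ before the orbit indices, and ensuring $n_\ell-n_k\ge 2$ so the chain has the required minimum length, is appropriate and sufficient.
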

\begin{definition}[Conley, 1978~\cite{Con78}]
  We call graph $\Gamma_f$ of a dynamical system $f$ on $X$ the directed graph having as nodes the elements of $\cR_f/\sim$ and having an edge from node $N_1$ to node $N_2$ if, arbitrarily close to $N_1$, there are points $x$
  such that $\omega_f(x)\subset N_2$.
\end{definition}
Because of the definition above, from now on we will refer to chain-recurrent equivalence classes simply as {\bf nodes}.
%The points of $\Rf / \sim$ will be the nodes of the graph associated to $f$ and so we refer to them as {\bf nodes}. 
Notice that, as a subset of $X$, each node is closed and invariant under $f$ \cite{Nor95b}.

\allviolet
\medskip\noindent
%{\bf Attractors and repellors.} We recall below a few fundamental properties of closed invariant sets of unimodal maps.
%
\begin{definition}[Milnor, 1985~\cite{Mil85}]
  A closed invariant set $A\subset[a,b]$ is an {\bf attractor} if it satisfies the following conditions:
  \begin{enumerate}
        \item the basin of attraction of $A$, namely the set of all $x\in[a,b]$ such that $\omega(x)\subset A$, has strictly positive measure;
        \item there is no strictly smaller invariant closed subset $A'\subset A$ whose basin differs from the basin of $A$ by just a zero-measure set.
    \end{enumerate}
\end{definition}
\begin{definition}
    We call a node an {\bf attracting node} if it contains an attractor, otherwise we call it a {\bf repelling node}. 
\end{definition}
%
%Throughout the article we will use $p$
%The edges of the graph will be defined through the asymptotics of the system’s bitrajectories, as explained below.
\allblack

\medskip\noindent
{\bf T-unimodal maps.} 
\allviolet 
We highlight here the main dynamical properties that hold for the tent map family. Our main results are true for every map with such property. 
%We call such maps {\em T-unimodal}.  

\begin{definition}
    Let $f$ be a unimodal map and $J\subset[a,b]$ a closed interval. We say that $J$ is a {\bf wandering interval} if:
   \begin{enumerate}
       \item the intervals $f^k(J)$, $k=0,1,\dots$, are mutually disjoint;
       \item $J$ is not in the basin of a periodic orbit.
   \end{enumerate}
   We say that $J$ is a {\bf homterval} for $f$ if $c$ is not in the interior of $f^k(J)$ for any integer $k\geq0$,
    namely if the restriction of $f$ to each $f^k(J)$ is monotonic.
%  A closed interval $J\subset [0,1]$ is a {\bf homterval} for $\ell_\mu$ if, for each integer $k\geq0$, $c$ is not in the interior of $\ell^k_\mu(J)$.
\end{definition}
The two definitions above are related by the following well-known result (see Lemma 3.1 in Ch.~2 of~\cite{dMvS93}).
\begin{proposition}
  \label{prop:homtervals}
  Let $J$ be a homterval for $f$. Then either one of the following holds:
  \begin{enumerate}
  \item $J$ is a wandering interval;
  \item for each $x\in J$,   $\omega_f(x)$ is a periodic orbit. 
  \end{enumerate}
\end{proposition}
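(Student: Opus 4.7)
The plan is to branch on the negation of the wandering-interval definition. Suppose $J$ is a homterval but not wandering. Then either (i) $J$ lies in the basin of some periodic orbit $O$, in which case $\omega_f(x)=O$ for every $x\in J$ and the conclusion is immediate, or (ii) there exist $m<n$ with $f^m(J)\cap f^n(J)\neq\emptyset$. The content of the proposition is showing that (ii) also forces every $\omega_f(x)$ to be a periodic orbit, so I would spend the rest of the argument in this case.

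In case (ii), set $k=n-m$, $I=f^m(J)$, and $g=f^k$. The homterval hypothesis says that every $f^j$ is monotonic on $J$, and writing $f^{m+jk}=g^j\circ f^m$ and using that $f^m|_J$ is a monotone homeomorphism onto $I$ transfers the monotonicity of each $f^{m+jk}|_J$ into monotonicity of $g^j|_I$; in particular $g$ itself is monotonic on each $g^j(I)$. I would then build inductively the nested union $I_n=\bigcup_{j=0}^n g^j(I)$ and check that $I_n$ is an interval on which $g$ is monotonic. Overlap of $I$ with $g(I)$ propagates under $g$ to overlap of $g^j(I)$ with $g^{j+1}(I)$, so $I_n$ stays connected at every step; and two monotone restrictions of $g$ to overlapping subintervals must share their direction and glue into a single monotone function on the union. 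The limit $I_\infty=\bigcup_n I_n$ is therefore an interval on which $g$ is monotonic and which $g$ maps into itself.

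From there I would invoke the classical property of monotone self-maps of an interval. If $g$ is increasing on $I_\infty$, then for every $y\in I_\infty$ the bounded sequence $g^j(y)$ is monotone in $j$ (increasing or decreasing according to the sign of $g(y)-y$) and hence converges to a fixed point $p$ of $g$; if $g$ is decreasing, then $g^2$ is increasing and the same argument applied to $g^2$ confines $\omega_g(y)$ to at most a $2$-cycle of $g$. Either way $\omega_g(y)$ is a periodic orbit of $g=f^k$ and, by continuity of $f$, $\omega_f(x)$ is the full $f$-orbit through that cycle for any $x\in J$ (take $y=f^m(x)\in I$).

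The main obstacle in the whole argument is the monotone-extension step of the second paragraph: the homterval hypothesis only gives monotonicity of $g$ on each individual iterate $g^j(I)$, and one has to exploit the overlap $I\cap g(I)\neq\emptyset$ to knit these together into monotonicity on the full invariant interval $I_\infty$. Once that is in place, the rest of the proof is the textbook monotone-convergence calculation, and the dichotomy in the statement matches exactly the dichotomy in the definition of wandering interval.
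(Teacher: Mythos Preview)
Your proof is correct and follows essentially the same route as the paper: assume $J$ is not wandering, use the overlap of iterates to build a forward-invariant interval $I_\infty$ (the paper's $H$) for the iterate $g=f^k$, observe that $g$ restricts to a monotone self-map of this interval, and finish with the standard monotone-convergence argument. The only differences are cosmetic---you make the trivial basin branch of the negation explicit and spell out the increasing/decreasing endgame, while the paper compresses both into ``$f^m|_H$ is a homeomorphism and the claim follows.''
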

\begin{proof}
    \olive{
    Set $J_k=f^k(J)$.
    Notice that $f^{k'}(J_k)=J_{k+k'}$.
    If $J$ is not a wandering interval,
    then there are $n\geq0$ and $m>0$ such that $J_n\cap J_{n+m}\neq\emptyset$, so that
    $J_n\cup J_{n+m}$ is connected.
    Then, similarly, $J_{n+m}\cap J_{n+2m}\supset f^m(J_n\cap J_{n+m})\neq\emptyset$ and so 
    $J_n\cup J_{n+m}\cup J_{n+2m}$ is connected.
    Ultimately, this argument shows that the closure of $\cup_{k=0}^\infty J_{n+km}$, denoted by $H$, is an interval.
    This interval is forward invariant under $f^m$ by construction and does not contain $c$ since $J$ is a homterval, so $f^m|_H$ is a homeomorphism and the claim follows.
    } 
\end{proof}
\begin{corollary}
  A map without wandering intervals and attracting periodic orbits cannot have homtervals.
\end{corollary}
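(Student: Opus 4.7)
The plan is to argue by contrapositive, applying Proposition~\ref{prop:homtervals} essentially directly. Suppose, for contradiction, that $f$ has a homterval $J$. Proposition~\ref{prop:homtervals} leaves exactly two possibilities, and I would simply rule each of them out.

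The first possibility is that $J$ is itself a wandering interval, which is immediately forbidden by the hypothesis that $f$ has no wandering intervals. The second possibility is that for every $x\in J$ the $\omega$-limit $\omega_f(x)$ is a periodic orbit, and the goal is to show that this forces $f$ to have an attracting periodic orbit. To do this I would reuse the construction inside the proof of Proposition~\ref{prop:homtervals}: it produces an integer $m>0$ and a (nondegenerate) interval $H\supset f^n(J)$ that is forward-invariant under $f^m$ and contains no critical point, so that $f^m|_H$ is a monotone homeomorphism of $H$ into itself.

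For a monotone homeomorphism of an interval into itself, every orbit converges to a fixed point of the map (either to an interior fixed point attracting a one-sided neighborhood or monotonically to an endpoint of $H$, which is itself fixed by $f^m$). In particular, a whole one-sided subinterval of $H$ lies in the basin of a single fixed point $q$ of $f^m$, i.e.\ in the basin of a periodic orbit of $f$ of period dividing $m$. This subinterval has positive Lebesgue measure, so by Milnor's definition this periodic orbit is an attractor, which by the definition of attracting node makes $f$ have an attracting periodic orbit and contradicts the hypothesis.

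The only mild subtlety I anticipate is making sure one genuinely extracts a \emph{single} periodic orbit attracting a set of positive measure, rather than different periodic orbits attracting different portions of $H$; but monotonicity of $f^m|_H$ immediately gives this, because the set of points whose forward orbit under $f^m$ converges to a given fixed point is an interval. Everything else is bookkeeping from Proposition~\ref{prop:homtervals}.
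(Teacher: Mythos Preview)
Your argument is correct and matches the paper's intent: the corollary is stated there without proof, as an immediate consequence of Proposition~\ref{prop:homtervals}, and you are simply spelling out why alternative~(2) of that proposition yields an attracting cycle. One small point to patch: the interval $H$ in the proof of Proposition~\ref{prop:homtervals} is only constructed under the assumption that some iterates $f^n(J)$ and $f^{n+m}(J)$ overlap, whereas by the definition of wandering interval $J$ could fail to be wandering solely because it already lies in the basin of a single periodic orbit while all the $f^k(J)$ remain pairwise disjoint; in that sub-case you cannot build $H$, but the conclusion is then immediate since that periodic orbit attracts the whole interval $J$ and is therefore an attracting cycle.
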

\begin{example}
  If $f$ is unimodal and $f(c)<c$, every interval $H\subset \violet{[a,b]}$ is trivially a homterval. In case of the tent map $T_s$, this happens when $s\in[0,1)$.
\end{example}
\begin{example}
 \olive{
  Consider the logistic map $\ell_\mu$ for $\mu\in(1,2)$. Then the internal fixed point $\pi_\mu=1-1/\mu$ is attracting and regular, namely $\ell'_\mu(\pi_\mu)>0$.
  In particular, $\pi_\mu<c=1/2$.
  The interval $H=[0,\pi_\mu]$ is forward invariant and does not contain $c$, so  is a homterval. 
  Every point of $H$ except $x=0$ converges to $\pi_\mu$.
  }
\end{example}%
\olive{
In fact, the homterval of the example above is the only type of homterval that can arise in S-unimodal maps (e.g. see the proof of Prop.~3 in~\cite{DLY20}).}
%\blue{Guckenheimer 1979 is a reference for the fact that S-unimodal maps do not have wandering intervals. And every attracting cycle of a S-unimodal map contains $c$ in its basin according to Singer's theorem.}}
%\red{I actually wanted to provide a source where it is shown in concrete that the only homterval one can have in S-unimodal maps is of the type above, I do not know any other source of this specific result (although I'm sure there are)} 
%\blue{Okay! But there is no Proposition~3 in \cite{DLY20} :)}
%\red{There is! At page 17, see \url{https://deleo.website/papers/34.pdf}} \blue{Hm, I was probably looking at a wrong paper! Thanks!}
%\bigskip

\medskip
Throughout this article, we focus on the subset of unimodal maps $f$ satisfying the following assumption.

\medskip\noindent
{\bf Assumption (T).} The map $f$ has:
\begin{enumerate}
    \item \violet{no wandering intervals};
    \item no attracting cycles;
    \item a finite number of nodes.
%    \item $f$ has a single attractor;
%    \item the restriction of some power of $f$ to any component of the attractor is topologically exact.
\end{enumerate}
\begin{definition}
    We say that a unimodal map is {\em T-unimodal} if it satisfies Assumption (T). 
\end{definition}
Below we show that, like in case of S-unimodal maps, also T-unimodal maps have a single attractor.
%
%\allteal
% \begin{definition}
%     \label{def:trap}
%     \allteal
%     Let $f$ be a unimodal map. 
%     By a {\bf period-$k$ trapping region} for $f$ we mean a collection $\cT$ of $k$ intervals with disjoint interiors $J_1,\dots,J_k$ 
%     such that:
%     %
%     \begin{enumerate}
%     \item $c\in J_1$;
%     \item $f^k(\partial J_1)\subset\partial J_1$;
%     \item $f(J_i)\subset J_{i+1}$ for each $i=1,\dots,k$, where we set $J_{k+1}\bydef J_1$.
%     \end{enumerate}
%     %
%     We say that $\cT$ is {\bf tight} when $f^k(J_1)=J_1$.
%     \magenta{We use the notation $J_i(\cT)$ to denote the $i$-th interval of the trapping region $\cT$ and $\Jall(\cT)$ for the union of the interiors of all $\J_i(\cT)$. Finally, we denote by $\gamma(\cT)$ the periodic orbit at the boundary of the $J_i(\cT)$ (see the remark below).}
% %    the inclusions in point (3) above are actually all equalities: $J_i=f^i(J_1)$ for all $i=1,\dots,k$.
% %    Finally, we denote 
%     %by $J_i$ the sets $f^{i-1}(J_1)$, $1\leq i\leq k$, and 
% %    by
% %    $\Jall=\Jall(\cT)$ the union of the interiors of all $\J_i$.
%  %    and $\gamma(\cT)$ to denote the periodic orbit ....}
% \end{definition}
%
\begin{definition}
    \label{def:trap}
    %\allmagenta
    Let $f$ be a unimodal map. 
    By a {\bf period-$k$ trapping region} for $f$ we mean a collection $\cT$ of $k$ intervals with disjoint interiors $J_1,\dots,J_k$ 
    such that:
    \begin{enumerate}
    \item $c\in J_1$;
    \item $f(J_i)\subset J_{i+1}$ for each $i=1,\dots,k$, where we set $J_{k+1}\bydef J_1$.
    \end{enumerate}
    We use the notation $J_i(\cT)$ to denote the $i$-th interval of the trapping region $\cT$ and $\Jall(\cT)$ for the union of the interiors of all $\J_i(\cT)$.
    We say that $\cT$ is {\bf tight} when $f^k(J_1(\cT))=J_1(\cT)$.
    We say that it is {\bf cyclic} when the endpoints of $J_1(\cT)$ are conjugated and one of them is periodic.
%    $f^k(\partial J_1)\subset\partial J_1$ and that is $\cT$
    In that case, we denote by $\gamma(\cT)$ the periodic orbit to which the periodic endpoint of $J_1(\cT)$ belongs to.
%    the inclusions in point (3) above are actually all equalities: $J_i=f^i(J_1)$ for all $i=1,\dots,k$.
%    Finally, we denote 
    %by $J_i$ the sets $f^{i-1}(J_1)$, $1\leq i\leq k$, and 
%    by
%    $\Jall=\Jall(\cT)$ the union of the interiors of all $\J_i$.
 %    and $\gamma(\cT)$ to denote the periodic orbit ....}
\end{definition}
\begin{example}
  %\allmagenta
  Consider the tent map $T_s$. For $s\in(1,2]$,
  the interval 
  %period-1 trapping region 
  $J_1=[T^2_s(c),T_s(c)]$
  is a period-1 tight trapping region for $T_s$. 
  Indeed 
  $$
  T_s(c)=s/2>c,\;T^2_s(c)=(2-s)T_s(c)<c,\;T^3_s(c)=sT^2_s(c)>T^2_s(c),
  $$
  so that $c\in J_1$ and
  $$
  T_s(J_1) = T_s([T^2_s(c),c]\cup[c,T_s(c)])=[T^3_s(c),T_s(c)]\cup[T^2_s(c),T_s(c)]=J_1.
  $$
  Notice that this trapping region  
  %of Example~\ref{ex:core} 
  is cyclic only for $s=2$, since $T_s(c)$ is fixed only for $s=1$ and $T^2_s(c)$ is fixed only for $s=1,2$. 
\end{example}
\begin{example}
    \label{ex:cyclic}
    \allteal
    Consider the tent map $T_s$.
    For $\sqrt{2}\leq s<2$, $T_2$ has a single repelling node, $N_0=\{0\}$. 
    In this case, there is only one cyclic trapping region: 
%    \blue{This is somewhat out of place here, we don't know the connection between repelling nodes and trapping regions yet.} 
    $$\cT_0=\{[0,1]\}.$$ 
    For all $1<s<\sqrt{2}$, $T_s$ has also a second repelling node $N_1=\{\bp_s\}$, the internal fixed point, painted in green and labeled $p_1$ in Fig.~\ref{fig:tr}. 
    Correspondingly, $T_s$ has a second, period-2, trapping region 
    $$
    \cT_1=\{J_1=[q_1,p_1],J_2=[p_1,q_2]\},
    $$ 
    painted in red, where $q_1=\hat p_1$ and $q_2$ is the unique solution to $f^2(x)=p_1$ such that $x>p_1$. Since the only roots of $f(x)=p_1$ are $p_1$ itself and $q_1$, it follows that $f(q_2)=q_1$
    
    For all $1<s<\sqrt[4]{2}$, $T_s$ has also a third repelling node $N_2$ equal to the period-2 orbit whose points are labeled by $p'_1,p'_2$ in Fig.~\ref{fig:tr}.
    Notice that we labeled by $p'_1$ the closest to $c$ of the two points.
    Correspondingly, $T_s$ has a third, period-4, trapping region
    $$
    \cT_2=\{J'_1=[p'_1,q'_1],J'_2=[p'_2,q'_2],J'_3=[q'_3,p'_1],J'_4=[q'_4,p'_2]\},
    $$ where $q'_1=\widehat{p'_1}$, $q'_4=f(q'_3)$, $q'_3=f(q'_2)$ and $q'_2$ is the unique solution to $f^4(x)=p'_2$ such that $x>p'_2$. Note that it follows that $f(q'_4)=q'_1$.
%   Consider the tent map $T_s$. For $s\in(1,2]$, its internal fixed point is $\bp_s=\frac{s}{s+1}$ and its conjugate is $\hat\bp_s=\frac{1}{s+1}$. The collection
%   $$
%   \cT_s=\left\{J_1=[\hat{\bp}_s,\bp_s]
%   %=\left[\frac{1}{s+1},\frac{s}{s+1}\right]
%   ,
% %  J_2=[\bp_s,T_s(c)]=\left[\frac{s}{s+1},\frac{s}{2}\right]
%   J_2=[\bp_s,q_s]
%   %=\left[\frac{s}{s+1},1-\frac{1}{s(s+1)}\right]
%   \right\},
%   $$
%   where $q_s=1-\frac{1}{s(s+1)}$ is the solution of $T_s(x)=\hat\pi_s$ at the right of $\bp_s$,
% %  \red{to myself: update $J_2$ and add lines on when this cyclic trapping region is tight.}
%   is a period-2 cyclic trapping region for $s\in(1,\sqrt{2}]$.
% %  Indeed $T_s(J_1)=J_2$ for every $s\in(1,2]$ while $T_s(J_2)\subset J_1$ is equivalent to $T^2_s(c)\geq\hat{\bp}$ which, for positive $s$, holds iff $s\in(1,\sqrt{2}]$. 
%   Indeed $T_s(J_2)=J_1$ for $s>1$ while $T_s(J_1)=[\pi_s,s/2]\subset J_2$ for $s\leq\sqrt{2}$. Notice that this trapping region is tight, namely $T^2_s(J_1)=J_1$, only for $s=\sqrt{2}$.
%   At that parameter value there is a crisis~\cite{GOY82}: an unstable orbit plunges into the attractor and the node containing it merges with the attracting node.
\end{example}
% \begin{remark}
%     When $f^k$ is not surjective on $J_1$, the choice of $J_2$ is not unique. 
%     The minimal choice is $J^m_2=f(J_1)$. With this choice, $J_i=f^i(J_1)$ for all $i=1,\dots,k-1$.
%     The maximal choice is 
% \end{remark}
%
\begin{figure}
 \centering
 \includegraphics[width=12cm]{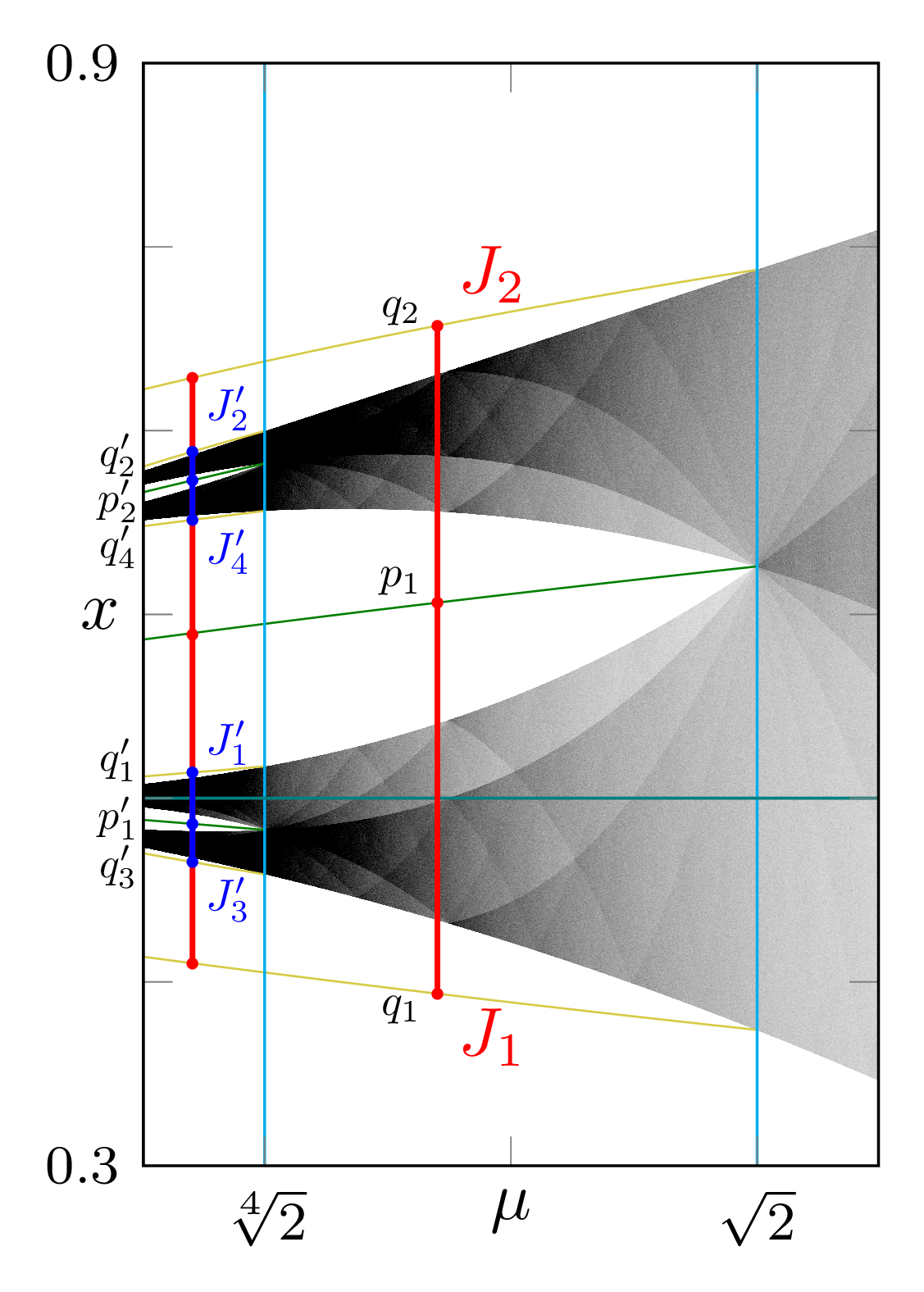}
 \caption{{\em Examples of cyclic trapping regions for the tent map.}
 This picture shows a part (not in scale) of the bifurcation diagram of the tent map family. See Ex.~\ref{ex:cyclic} for a thorough description of the picture.
% For $s\geq\sqrt{2}$ there is the single repelling node $N_0=\{0\}$ and so the only cyclic trapping region is $\cT(N_0)=\{[0,1]\}$. For all $1<s<\sqrt{2}$ there is also a node $N_1=\{\bp_s\}$, the internal fixed point, displayed in green and labeled $p_1$. The corresponding period-2 trapping region is $\cT(N_1)=\{J_1=[q_1,p_1],J_2=[p_1,q_2]\}$, displayed in red, where $q_1=\hat p_1$ and $f(q_2)=q_1$.
}
 \label{fig:tr}
\end{figure} 

\allblack
%
% \allteal
% \begin{definition}
%       \label{def:trap}
%     By a {\bf trapping region} we mean a collection $\cT$ of $k$ intervals with disjoint interiors $J_1,\ell_\mu(J_1),\dots,\ell_\mu^{k-1}(J_1)$ 
%     such that:
%     %
%     \begin{enumerate}
%     \item $c\in\text{int}(\J_1)$;
%     \item $\ell_\mu^k(J_1)\subset J_1$.
%     \end{enumerate}
%     %
%     We denote by $J_i$ the sets $\ell_\mu^{i-1}(J_1)$, $1\leq i\leq k$, and by
%     $\Jall=\Jall(\cT)$ the union of the interiors of the $\J_i$.
% \end{definition}
% \allblack
%

\noindent
{\bf Attractor of a T-unimodal map.} The uniqueness of the attractor of a T-unimodal map follows at once from the fundamental result below.
\begin{thmA}[Jonker and Rand, 1980~\cite{JR80}; see also Thm~4.1 in~\cite{dMvS93}]\label{thm:JonkerRand}
  Let $f$ be a unimodal map with critical point $c$. Then its attractors can be only of the following three types:
  \begin{enumerate}
      \item a periodic orbit;
      \item a trapping region with a dense orbit; % and containing $c$;
      \item a Cantor set on which $f$ acts as an adding machine. This attractor contains $c$ and the orbit of $c$ is dense in it. \teal{In this case, and only in this case, $f$ has infinitely many nodes}.
  \end{enumerate}
  \allteal
  If there is no attractor of type 2 or 3, then one of the attracting periodic orbits has $c$ in its basin of attraction.
\end{thmA}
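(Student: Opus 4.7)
The plan is to study attractors of a unimodal map $f$ through the orbit and $\omega$-limit of its critical point $c$, exploiting the fact that in a unimodal map every attractor is forced to interact with $c$ in a highly constrained way. First I would prove the auxiliary claim that for any attractor $A$, one has $\omega_f(c)\subset A$: since preimages of open sets pass through $c$ because of the two-to-one folding, any set of positive measure that is forward-asymptotic to $A$ pulls back to a sequence of intervals that eventually accumulate at $c$, and by Milnor's definition this forces $c$'s forward orbit to accumulate on $A$. This reduces the classification of attractors to classifying the possible $\omega_f(c)$.

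The next step is to invoke the renormalization structure around $c$. Using Definition \ref{def:trap}, consider the collection of tight periodic trapping regions $\cT$ that contain $c$; one always has at least the ambient interval $\{[a,b]\}$ (or the core $[f^2(c),f(c)]$). Either there is a \emph{smallest} such $\cT$, of some period $k$, or there is an infinite strictly decreasing sequence $\cT_1 \supsetneq \cT_2 \supsetneq \cdots$ with periods $k_1 \mid k_2 \mid \cdots$.

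In the first case, the restriction $f^k|_{J_1(\cT)}$ is itself a unimodal map that admits no proper periodic subinterval around $c$. Two sub-cases arise. If $f^k|_{J_1(\cT)}$ has an attracting periodic point, that cycle together with its $f$-translates forms the attractor, yielding case 1. Otherwise, the absence of a further renormalization combined with the corollary on absence of homtervals (applicable under no wandering intervals and no attracting cycles) implies that $f^k|_{J_1(\cT)}$ is topologically transitive, so the orbit of $c$ is dense in $\Jall(\cT)$, yielding case 2 with the attractor equal to the closure of $\Jall(\cT)$.

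In the second case, set $K = \bigcap_n \overline{\Jall(\cT_n)}$. It is closed, $f$-invariant and contains $\omega_f(c)$. Since wandering intervals are excluded and the components of $\Jall(\cT_n)$ containing $c$ are strictly nested with increasing periods, their diameters must tend to zero (otherwise a common subinterval would be a homterval), so $K$ is totally disconnected; every point of $K$ is accumulated by $c$-translates coming from deeper levels, so $K$ is perfect, hence a Cantor set containing $c$ and with $\overline{\{f^n(c)\}_{n\geq 0}}=K$. The combinatorial action of $f$ permutes the $k_n$ pieces of $\cT_n$ in a single cycle, compatibly with the nesting $k_n \mid k_{n+1}$, so $f|_K$ is conjugate to the adding machine on $\varprojlim \bZ/k_n\bZ$, giving case 3. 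The final sentence, that in the remaining situation $c$ lies in the basin of an attracting periodic orbit, follows because the only way no attractor of type 2 or 3 exists is that the first case above terminates with the attracting-cycle sub-case, and by construction $c \in J_1(\cT)$ is in that cycle's basin.

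The main obstacle is the transitivity step in the terminal non-renormalizable case, since without smoothness one cannot invoke the usual Schwarzian-derivative or $C^2$ machinery. One must instead extract transitivity purely from the combinatorics: no proper invariant subinterval around $c$, no homtervals, and finiteness of the node set together imply that the complement of $\omega_f(c)$ inside $\Jall(\cT)$ would itself be a union of wandering or pre-periodic intervals, contradicting Assumption (T).
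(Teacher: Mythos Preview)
The paper does not prove this statement; Theorem~A is quoted from the literature (Jonker--Rand~\cite{JR80}, de~Melo--van~Strien~\cite{dMvS93}) and used as a black box, so there is no in-paper proof to compare against.

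That said, your sketch has two real gaps worth flagging. First, the opening claim that $\omega_f(c)\subset A$ for \emph{every} Milnor attractor $A$ is false as stated: a general $C^0$ unimodal map can have several attracting periodic orbits, and $c$ lies in the basin of at most one of them. Your heuristic (``preimages of the basin accumulate at $c$'') only shows that $c$ lies in the closure of the basin, not that the forward orbit of $c$ enters $A$; take a map with two attracting fixed points for a concrete counterexample. The correct reduction is rather that any attractor which is \emph{not} a periodic orbit must contain $c$ in its basin (hence contain $\omega_f(c)$), which is what one actually proves in the cited references.

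Second, at several points you invoke ingredients of Assumption~(T)---absence of wandering intervals, absence of attracting cycles, finiteness of the node set---but Theorem~A is stated for an arbitrary unimodal map, before Assumption~(T) is introduced. In particular you cannot appeal to ``no homtervals'' in the non-renormalizable transitivity step, nor to finiteness of nodes, since case~3 of the theorem is precisely the situation where infinitely many nodes occur. The standard proof handles the terminal non-renormalizable case by a direct combinatorial/kneading argument (or, in the smooth category, via bounded distortion), and when wandering intervals are present the attractor can still be described but the analysis is more delicate; your outline as written does not cover that.
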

\begin{corollary}
    \allteal
    Every unimodal map without attracting periodic orbits has a single attractor.
\end{corollary}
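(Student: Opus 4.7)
The plan is to invoke the Jonker--Rand theorem (Theorem A just stated) to restrict the list of possible attractors, and then to observe that each of the remaining types is uniquely determined by the forward orbit of the critical point~$c$.

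First I would rule out type~1 attractors. A type~1 attractor is a periodic orbit; the hypothesis that $f$ has no attracting periodic orbits excludes this case, since a periodic orbit that is a Milnor attractor of a unimodal map is, in particular, topologically attracting on at least one side (its basin has positive measure, forcing a nondegenerate one-sided neighbourhood to be attracted). Moreover, the last sentence of Theorem~A says that if \emph{no} attractor of type~2 or type~3 exists, then $c$ lies in the basin of some attracting periodic orbit, which is again forbidden by hypothesis. Hence $f$ admits at least one attractor, and every attractor is of type~2 or type~3.

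Next I would show that any such attractor~$A$ is equal to $K:=\overline{\{f^n(c):n\ge 0\}}$. For type~3 this is stated in Theorem~A: $A$ contains $c$ and the orbit of $c$ is dense in it, so $A=K$. For type~2, the attractor is a period-$k$ trapping region $J_1\cup\cdots\cup J_k$; by Definition~\ref{def:trap} we have $c\in J_1$, and since the orbit of some point of $J_1$ is dense in $J_1$ under $f^k$, topological transitivity of $f^k|_{J_1}$ together with the fact that $c$ is an extreme point of the image $f^k(J_1)\subseteq J_1$ forces the orbit of $c$ to be dense in $J_1$ as well; iterating by $f$ then gives density of $\{f^n(c)\}$ in $J_1\cup\cdots\cup J_k=A$, so again $A=K$.

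Since $K$ depends only on $f$ and not on the chosen attractor, any two attractors coincide, proving uniqueness. The main (minor) obstacle is the verification that the orbit of $c$ is dense in a type~2 attractor; this is folklore in unimodal dynamics and could alternatively be handled by citing the relevant statement in \cite{dMvS93}, but I think the sketch above (transitivity of $f^k|_{J_1}$ plus the extremality of $c$ in $f^k(J_1)$) is the cleanest in-text justification.
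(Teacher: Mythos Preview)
The paper gives no explicit proof of this corollary; it is stated immediately after Theorem~A and treated as self-evident. Your argument supplies exactly the details the paper suppresses, and the approach---exclude type~1 by hypothesis, then identify any type~2 or type~3 attractor with the closure of the critical orbit---is the natural one and is correct.

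One small comment on your type~2 sketch: the phrase ``extremality of $c$ in $f^k(J_1)$ forces the orbit of $c$ to be dense'' is not quite a complete argument as stated (extremality of the \emph{image} of $c$ alone does not immediately give density of the forward orbit). A cleaner in-text justification is that transitivity of $f^k|_{J_1}$ rules out homtervals, so every subinterval of $J_1$ eventually covers $c$; hence preimages of $c$ are dense, and since the trapping region is tight its boundary consists of iterates of $c$, forcing $\overline{\{f^{kn}(c)\}}=J_1$. Alternatively, and perhaps more in the spirit of the paper's one-line corollary, one can simply note that any attractor of type~2 or~3 contains $c$ and hence contains $\omega_f(c)$; two distinct Milnor attractors with this property would violate the minimality clause in Milnor's definition. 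Either route closes the gap you flagged.
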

Throughout the article, we refer informally to attractors of type (2) as {\em chaotic attractors}, since the dynamics of $f$ on this type of attractor is chaotic according to both Li-Yorke and Devaney (e.g. see~\cite{AK01}).
\begin{corollary}\label{cor:Tunim}
    Every T-unimodal map $f$ has a single attractor and this single attractor is chaotic.
\end{corollary}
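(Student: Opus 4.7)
The plan is to combine the immediately preceding corollary with Theorem~A (Jonker--Rand), eliminating two of its three alternatives by means of the conditions in Assumption~(T).

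First, I would invoke the corollary stated just above: since a T-unimodal map has no attracting periodic orbits by Assumption~(T.2), it has a single attractor. This settles the uniqueness part of the claim without further work, so I only need to identify the type of this unique attractor $A$.

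For the chaoticity part, I would apply Theorem~A to $A$, which asserts that $A$ is either a periodic orbit, a trapping region carrying a dense orbit, or a Cantor set on which $f$ acts as an adding machine. The first alternative is ruled out by Assumption~(T.2), since a periodic attractor is in particular an attracting cycle. The third alternative is ruled out by Assumption~(T.3): the concluding remark in item~(3) of Theorem~A states that the adding-machine case arises precisely when $f$ has infinitely many nodes, whereas a T-unimodal map has only finitely many nodes by assumption. Therefore $A$ must be of type~(2), that is, a trapping region with a dense orbit, which is exactly what we call a chaotic attractor.

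I do not foresee any real obstacle here; the statement is essentially a bookkeeping corollary that records which condition in Assumption~(T) kills which Jonker--Rand alternative. The only subtlety worth double-checking is that Assumption~(T.1) (no wandering intervals) plays no role in this particular corollary, but rather in subsequent arguments where the absence of homtervals (guaranteed by the earlier corollary on homtervals, which uses both (T.1) and (T.2)) becomes crucial.
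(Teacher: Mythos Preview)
Your proposal is correct and matches the paper's intended reasoning exactly. The paper states this corollary without explicit proof, treating it as immediate from Theorem~A together with Assumption~(T); your write-up simply spells out which hypothesis of~(T) eliminates which Jonker--Rand alternative, which is precisely the bookkeeping the paper leaves implicit.
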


\allviolet

\noindent
{\bf Basic properties of the tent map family.}

\begin{definition}
    \label{def:core}
  We call {\bf core} of a unimodal map $f$ the interval $[f^2(c),f(c)]$.
\end{definition}
\begin{notation}
    We denote the attractor of a T-unimodal map $f$ by $A_f$, the core of $f$ by $K(f)$ and we use the notation $c_k$ for the point $f^k(c)$. In this notation, $K(f)=[c_2,c_1]$ (see Fig.~\ref{fig:tent}).
\end{notation}
%
%\allteal
%From now on, throughout this article 
%
%\allmagenta
\begin{proposition}
  \label{prop:core}
  The core of any T-unimodal map $f$ is a period-1 tight trapping region.
\end{proposition}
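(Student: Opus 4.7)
The plan is to verify the conditions in Definition~\ref{def:trap} for $\cT=\{[c_2,c_1]\}$: namely $c\in[c_2,c_1]$, forward invariance $f([c_2,c_1])\subset[c_2,c_1]$, and tightness $f([c_2,c_1])=[c_2,c_1]$. Since T-unimodal maps have no homtervals (the corollary after Proposition~\ref{prop:homtervals}), this is the main weapon for two of the three conditions.

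For $c\in[c_2,c_1]$, I would rule out $c_1\le c$ by observing it forces $f([a,b])\subset[a,c]$, making $[a,c]$ a forward-invariant interval with $c$ at its boundary, hence a homterval. And I would rule out $c_2>c$ by observing that then $[c_2,c_1]\subset(c,b]$, $f$ is strictly monotone decreasing on it, and $f([c_2,c_1])=[c_2,c_3]\subset[c_2,c_1]$, so all iterates of $[c_2,c_1]$ remain in $(c,b]$ and $[c_2,c_1]$ itself would be a homterval. Both contradict the corollary, yielding $c_2\le c<c_1$.

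For invariance and tightness, I would split $[c_2,c_1]=[c_2,c]\cup[c,c_1]$ and use monotonicity of $f$ on each side of $c$. The strictly decreasing branch gives $f([c,c_1])=[c_2,c_1]$, which already yields $[c_2,c_1]\subset f([c_2,c_1])$; the strictly increasing branch gives $f([c_2,c])=[c_3,c_1]$. Together $f([c_2,c_1])=[\min(c_2,c_3),c_1]$, so both conditions reduce to the single inequality $c_3\ge c_2$.

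This is the main obstacle, and it is where I would bring in the ``no attracting cycles'' assumption. Arguing by contradiction, suppose $c_3<c_2$. Since $f$ is strictly increasing on $[a,c]$ and $c_2\le c$, a one-line induction gives $c_{k+1}<c_k$ for every $k\ge 2$, so the tail of the critical orbit is strictly decreasing in $(a,c)$ and converges to some fixed point $L\in[a,c_2)$ with $f(L)=L$. Using strict monotonicity of $f|_{[a,c]}$ once more, every $x\in(c_3,c_2)$ satisfies $f^k(x)\in(c_{k+3},c_{k+2})$ by induction on $k$, and hence also converges to $L$. Thus the open interval $(c_3,c_2)$ lies entirely in the basin of $\{L\}$, which therefore has positive Lebesgue measure, so $\{L\}$ is a Milnor attractor and, being a single fixed point, an attracting cycle --- contradicting the T-unimodal hypothesis. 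Therefore $c_3\ge c_2$, and the proposition follows.
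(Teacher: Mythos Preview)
Your proof is correct and follows essentially the same three-step structure as the paper's: first establish $c\in[c_2,c_1]$, then show $c_3\ge c_2$, then compute $f([c_2,c_1])=[c_2,c_1]$ by splitting at $c$. The only differences are local: for $c_2>c$ you invoke the no-homterval corollary where the paper observes directly that the internal fixed point would be attracting, and for $c_3<c_2$ you spell out why the limit $L$ of the decreasing critical orbit is a Milnor attractor (via the open interval $(c_3,c_2)$ in its basin), whereas the paper simply asserts that convergence of the critical orbit to a fixed point contradicts the absence of attracting periodic orbits. You also explicitly dispose of the case $c_1\le c$, which the paper leaves implicit.
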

\begin{proof}\ 
    Notice first that we must have $\min\{c,c_3\}\geq c_2$.

    \allteal
    Assume indeed first that $c_2>c$. 
    Then $f$ would be injective and order-reversing on its core, so that the internal fixed point of $f$ would be inside $f([c_2,c_1])=[c_2,c_3]$ and so necessarily attracting.
    %Since $c\not\in [c_2,c_1]$, $f$ is monotone on its core and so $\pi$ would be attracting. 
    This is against the hypothesis that $f$ is T-unimodal.
    % $$
    % f([c_2,c_1]) = [c_2,c_3] \subset[c_2,c_1],
    % $$
    % namely the core would be forward-invariant but not a trapping region since it does not contain $c$.
    % This is not compatible with the fact that the attractor has a dense orbit and that it contains $c$: the dense orbit necessarily would enter the core and so could not get arbitrarily close to $c$. \blue{The core is invariant so it contains a fixed point for $f$, and since $f$ is monotone on the core, the fixed point is attracting. Maybe this argument is more straight-forward.}

    \allblack
    Assume now that $c_2>c_3$. By our previous argument, we have that $[c_3,c_2]$ lies at the left of $c$, so that $f$ is injective and order-preserving on it. Hence 
    $$
    f([c_3,c_2]) = [c_4,c_3]
    $$
    and so on. This means that the orbit of $c$ is converging to a fixed point, contradicting the fact that $f$ has no attracting periodic orbits. 
    
    The argument above shows that, in case of a T-unimodal, map we always have $c_1\geq \min\{c,c_3\}\geq c_2$. 
    Hence
    $$
    f([c_2,c_1])=f([c_2,c]\cup[c,c_1])=[c_3,c_1]\cup[c_2,c_1]=[c_2,c_1],
    $$
    namely $[c_2,c_1]$ is a period-1 tight trapping region.
%   Notice first of all that, when $c_3<c_2<c$, we have that the restriction of $f$ to $[c_3,c_2]$ is a orientation-preserving homeomorphism and so $f([c_3,c_2]) = [c_4,c_3]$. Hence, in general, $f([c_k,c_{k-1}]) = [c_{k+1},c_k]$, namely the forward orbit of $c$ is a monotonically decreasing sequence contained in $[a,c)$ and therefore it must converge to a fixed point, which is impossible for a T-unimodal map.
%   Now, let us go over all possible cases:

%   \vspace{2mm}\noindent
%   {\bf Case 1: $\bm{\bp\leq c}$}. Since $f$ is monotonically increasing at the left
%   of $c$ and monotonically decreasing at its right, this can happen if and only if the graph of $f$
%   is not above the diagonal at $x=c$, namely we must have $c_1\leq c$.
%   Since this implies that $x=a$ is attracting, this case is not possible.

%   \vspace{2mm}\noindent
%   {\bf Case 2: $\bm{c<\bp}$, $\bm{c\leq c_2}$.}  
%   In this case, $[c_2,c_1]$ cannot be a trapping region because $c$ is not in its interior. Hence, the attractor must be a periodic orbit, which cannot happen for a T-unimodal map.

%   \vspace{2mm}\noindent
%   {\bf Case 3: $\bm{c_2<c<\bp}$.}
%   In this case, we have that
%   $$                                          
%   f([c_2,c_1])=f([c_2,c]\cup[c,c_1])=[c_3,c_1]\cup[c_2,c_1]=[c_2,c_1]
%   $$
%   since, by our argument above, the configuration $c_3<c_2<c<\bp$ cannot take place
%   and therefore we must have $c_2\leq c_3$.
%   Hence in this case $[c_2,c_1]$ is a period-1 trapping region.
\end{proof}
% %
% \begin{example}
%   \label{ex:core}
%   For each $s\in(1,2]$, the core of $T_s$ is a period-1 tight trapping region of $T_s$. For $s\in[\sqrt{2},2]$, the core is also the attractor of $T_s$.
% \end{example}
% %
% \begin{example}
%   Let $f$ be a S-unimodal map with two fixed points and let $\bp$ be the internal one. Then the core of $f$ is a period-1 trapping region if and only if $f^2(c)<c<\bp$ (see Prop.~4 in~\cite{DL22}). For all these values, the trapping region is tight.
% \end{example}
%
%
\allblack
The conditions that define T-unimodal maps are strong but satisfied by many non-trivial unimodal maps. One is the set of all S-unimodal maps with a chaotic attractor. A second one, on which this article focuses, is the tent map family. The theorem below shows that, for $s\in(1,2]$, $T_s$ satisfies Assumption (T).  
%    satisfies (T) for each $s\in(1,2]$.
%\end{enumerate}
%On one side, all S-unimodal maps with a chaotic attractor satisfy (T).
%the tent map
%$$
%T_s(x) = s\frac{1-|1-2x|}{2}.
%$$
%For $s\in[0,2]$, $T_s([0,1])\subset[0,1]$.

%

%\teal{Next proposition collects several well-establised results on the tent map family that we need for our purposes.}
%
\begin{definition}
    Let $\Gamma$ be the graph of a dynamical system and $k$ either an integer or $\infty$.
    %Given the graph $\Gamma$ of a dynamical system, 
    By {\bf $k$-cascade} of
%    , where $k$ is possibly infinite, in the graph of a dynamical system 
    $\Gamma$ we mean a subgraph of $\Gamma$ 
    with nodes $N_1,\dots,N_k$ such that:
    \begin{enumerate}
    \item there are edges from each $N_i$ to all $N_j$ with $j>i$;
    \item $N_i$ is a periodic orbit of period $2^{i-1}$.
    \end{enumerate}
    %
    % A {\bf cascade segment} $N_1,\dots,N_k$ is a sequence of consecutive nodes such that, for some integer $r\geq1$, $N_i$ is a periodic orbit of period $2^{i-1}r$ for each $i$. The integer $r$ is the period of $N_1$ and we call it the {\em base period} of the cascade.
    % \blue{Why is this called a cascade "segment"? For some reason when I see "segment" I always think it is a sequence of intervals=segments. Is this an established term? If not, can it be simply a "cascade"?}
    % \red{It is not an established term, we definitely can just call "cascade".}
\end{definition}
\begin{definition}
    A map $f:X\to X$ is {\bf topologically exact} if, for every open subset $U\subset X$, there is an integer $n\geq1$ such that $f^n(U)=X$.  
\end{definition}
Next theorem collects several basic results on the tent map family.
\begin{theoremX}\label{thm:B}
    The following properties hold for the tent map family:
    \begin{enumerate}
        \item If $s\in[0,1)$, $\Omega_{T_s}$ consists of a single fixed point: the attracting endpoint $x=0$.
        \item If $s=1$, $\Omega_{T_s}$ consists of an interval of fixed points, $\Omega_{T_s}=[0,\frac{1}{2}]$.
        \item If $\log_2 s\in[2^{-p},2^{1-p})$, the attractor $A$ is chaotic
        %\olive{the core of the period-$2^{p-1}$ trapping region $T(N_{p-1})$} 
        and $\Omega_{T_s}$ is the disjoint union of $A$ with
        %an attracting trapping region of period $2^{p-1}$, 
        the repelling endpoint $x=0$ and a $p-1$-cascade~\cite{JR80,vS88}. 
        %\olive{In particular, for $s\in[\sqrt{2},2)$, $\Omega_{T_s}=\{0\}\cup[c_2,c_1]$.
         \item $\Omega_{T_2}=[0,1]$~\cite{UvN47}.
        \item For $s>1$, $T_s$ has no homtervals~\cite{JR80}, no attracting cycles~\cite{JR80} and the restriction of 
%        some power of 
        $T_s^{\,2^{p-1}}$ to any component of its attractor is topologically exact~\cite{JR80,BDOET91}. 
    \end{enumerate}
%    For each $s\in[0,2]$, except the degenerate case $s=1$, $T_s$ has a single attractor. For $s<1$, the attractor is the fixed point $x=0$.     For $s>1$, the attractor is a cycle of intervals.
\end{theoremX}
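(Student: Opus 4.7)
The plan is to dispatch each item separately, using direct arguments for the boundary cases and the renormalization structure of the tent family for the bulk statement.

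Items (1), (2), and (4) follow by direct inspection. For $s\in[0,1)$ one has $T_s([0,1])\subset[0,s/2]$, and the restriction of $T_s$ to $[0,s/2]$ is linear with Lipschitz constant $s<1$, so every orbit converges to the fixed point $0$ and $\Omega_{T_s}=\{0\}$. For $s=1$ an explicit computation shows that $T_1$ fixes every point of $[0,1/2]$ while $(1/2,1]$ is sent homeomorphically onto $[0,1/2)$, giving $\Omega_{T_1}=[0,1/2]$. For $s=2$ the conjugacy of~\cite{UvN47} relates $T_2$ to the doubling map of the circle; since periodic points of the doubling map are dense and $\Omega_{T_2}$ is closed and contains them, $\Omega_{T_2}=[0,1]$.

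Item (5) also admits elementary proofs. The map $T_s$ has slope $\pm s$ wherever differentiable, so $|T_s^k(J)|=s^k|J|$ for any interval $J$ on which $T_s^k$ is monotonic. If $J$ were a homterval with $|J|>0$, this would force $s^k|J|\leq 1$ for every $k$, contradicting $s>1$. The same expansion shows that no periodic orbit can be attracting, since any sufficiently small neighbourhood of a periodic orbit of period $n$ is stretched by the factor $s^n>1$ under $T_s^n$. Topological exactness of $T_s^{2^{p-1}}$ on each component of the attractor is established in~\cite{JR80,BDOET91} via symbolic/kneading dynamics.

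The substantive content is item (3), which the plan treats by induction on $p$. The base case $p=1$, namely $s\in[\sqrt{2},2)$, relies on Proposition~\ref{prop:core} together with topological exactness of $T_s$ on its core: the core $[c_2,c_1]$ is a tight period-$1$ trapping region containing a single chaotic attractor $A$ that fills it, and the only other non-wandering point is the repelling endpoint $N_0=\{0\}$. For the inductive step, one shows that when $\log_2 s\in[2^{-p},2^{1-p})$ with $p\geq 2$ the internal fixed point $\bp_s$ is a repelling node $N_1$ and the period-$2$ trapping region $\cT_1$ of Example~\ref{ex:cyclic} is tight and cyclic; the map $T_s^2$ restricted to the component of $\cT_1$ containing $c$ is then affinely conjugate to $T_{s^2}$ with $\log_2 s^2\in[2^{1-p},2^{2-p})$. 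Applying the inductive hypothesis to $T_{s^2}$ produces its $(p-2)$-cascade and chaotic attractor; pulling back through the renormalization conjugacy and symmetrising across the two components of $\cT_1$ yields the remaining nodes of the $(p-1)$-cascade and the attractor of $T_s$.

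The principal obstacle is the renormalization bookkeeping: one must verify that the trapping regions $\cT_k$ nest correctly, that each successive boundary orbit is a genuine $2^{k-1}$-cycle (rather than a shorter subcycle), and that no extra non-wandering points appear outside $A\cup\{0\}\cup\bigcup_{k=1}^{p-1}N_k$. These facts are classical consequences of the kneading-theoretic analysis of~\cite{JR80,vS88}, which we invoke to close the argument rather than reproduce in full here.
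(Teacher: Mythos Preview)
Your proposal is correct and follows essentially the same renormalization--induction strategy as the paper: direct inspection for the boundary cases, and for item~(3) the observation that when $s\le\sqrt{2}$ the core splits at the internal fixed point $\bp_s$ into two intervals on which $T_s^2$ is affinely conjugate to $T_{s^2}$, reducing to the case with one fewer cascade level. Two small remarks: the paper supplies an explicit elementary argument for topological exactness of $T_s$ on its core when $s\in(\sqrt{2},2]$ (iterating $T_s^2$ either covers the core or strictly increases length), rather than citing it; and your claim that $\cT_1$ is \emph{tight} is not quite right---it is cyclic but not tight in the sense of Definition~\ref{def:trap}---though this does not affect the conjugacy to $T_{s^2}$ or the induction.
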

\begin{proof} Note that $0$ is a fixed point of $T_s$ for all $s\in[0,2]$. If $s>1$, there is another fixed point $\bp=\frac{s}{s+1}$. We denote by $c=1/2$, and $c_n=T_s^n(c)$ for all $n\geq 1$.
    \begin{enumerate}
        \item If $s<1$, then $T_s(x)<x$ for all $x\in(0,1]$, so $0$ attracts all points in $(0,1]$. 
        \item If $s=1$, then $T_s(x)=x$ for all $x\in[0,c]$. If $x>c$, then $T_s^2(x)=\hat x<c$.
        \item If $1<s< 2$, then all $x\in(0,c_2)\cup (c_1,1]$ are attracted to the core $[c_2,c_1]$. Thus, $\Omega_{T_s}=\{0\}\cup\Omega_{T_s|_{[c_2,c_1]}}$. 
        
        If $s\in(\sqrt 2,2]$, then $T_s|_{[c_2,c_1]}\colon[c_2,c_1]\to[c_2,c_1]$ is topologically exact, see \eg \cite[Lemma~2]{BDOET91}. To see that, note that, if $J$ is an interval in $[c_2,c_1]$, then either $T^2_s(J)=[c_2,c_1]$ or the length of $T_s^2(J)$ is greater than the length of $J$. Thus, if $s\in(\sqrt 2,2]$, then $\Omega_{T_s|_{[c_2,c_1]}}=[c_2,c_1]$. 
        
        If $1<s\leq\sqrt 2$, then the core decomposes as $[c_2,c_1]=[c_2,\bp]\cup[\bp, c_1]=J_1\cup J_2$, where $T_s(J_1)=J_2$, $T_s(J_2)=J_1$, and $T_s^2|_{J_1}$, and $T_s^2|_{J_2}$ are topologically conjugate to $T_{s^2}|_{[0,c_1]}$. 
        From this the claim in point 3 follows inductively. 
        For $s=\sqrt 2$, $T_s^2|_{J_1}$ is conjugate to $T_2$, which implies $\Omega_{T_s|_{[c_2,c_1]}}=[c_2,c_1]$. For $s\in(\sqrt[4]{2},\sqrt 2)$, $T_s^2|_{J_1}$ is conjugate to $T_{s^2}|_{[0,c_1]}$, with the repelling fixed point $\bp$, and the core $[c_2,c_4]$, on which it is topologically exact. Thus $\Omega_{T_s|_{[c_2,c_1]}}=[c_2,c_4]\cup\{\bp\}\cup[c_3,c_1]$. See Figure~\ref{fig:tents}.
    
%        \teal{Notice that $\Omega_{T_{\sqrt{2}}}$ consists into the two components $\{0\}$ and $[c_2,c_1]$, just as for every $s\in(\sqrt{2},2)$, but the map is not exact because, in this particular case, $c_3=c_4=\bp$  }
        
%        \red{I am a bit puzzled: seems to me that, for $s=\sqrt{2}$, we have that $c_3=c_4=\bp$. So the non-wandering set of $T_{\sqrt{2}}$ is $\{0\}\cup[c_2,c_1]$, just as for $s>\sqrt{2}$. Or do you mean that, while the non-wandering set is the same for all $s\in[\sqrt{2},2]$, for the particular value $s=\sqrt{2}$ the action of the tent map on the attractor is not exact? }
        
        \item Note that $T_2$ is topologically exact on $[0,1]$, so $\Omega_{T_2}=[0,1]$.
        \item Follows directly from points 3 and 4.
    \end{enumerate}
\end{proof}

 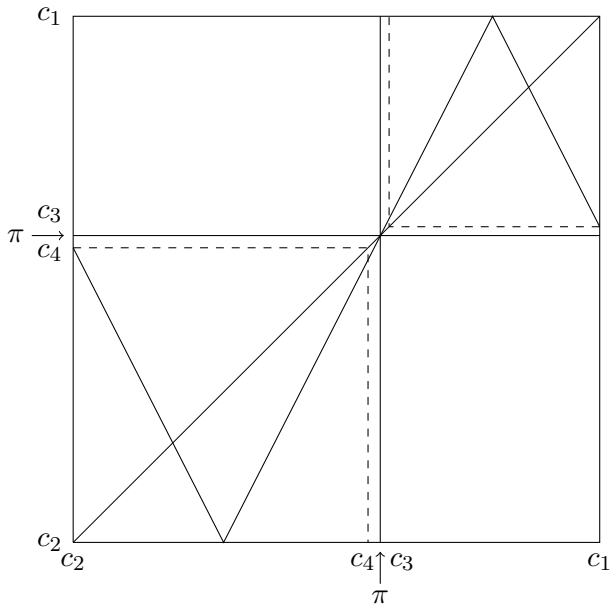
\begin{figure}[!ht]
 	\centering
 	\begin{tikzpicture}[scale=25]
 	\draw (0.42,0.42)--(0.42,0.7)--(0.7,0.7)--(0.7,0.42)--(0.42,0.42);
 	\draw[very thin] (0.42,0.42)--(0.7,0.7);
 	\draw (0.42,0.5768)--(0.5,0.42)--(0.643,0.7)--(0.7,0.588);
 	\draw (0.42,0.5833)--(0.5833,0.5833)--(0.5833,0.42);
 	\draw[dashed] (0.42,0.5768)--(0.5768,0.5768)--(0.5768,0.42);
 	\draw (0.7,0.5833)--(0.5833,0.5833)--(0.5833,0.7);
 	\draw[dashed] (0.7,0.588)--(0.588,0.588)--(0.588,0.7);
 	\node[below] at (0.42,0.42) {\small $c_2$};
 	\node[below] at (0.574,0.42) {\small $c_4$};
 	\draw[->] (0.5833,0.398)--(0.5833,0.415);
 	\node[below] at (0.5833,0.4) {\small $\bp$};
 	\node[below] at (0.595,0.42) {\small $c_3$};
 	\node[below] at (0.7,0.42) {\small $c_1$};
 	
 	\node[left] at (0.42,0.42) {\small $c_2$};
 	\node[left] at (0.42,0.574) {\small $c_4$};
 	\draw[->] (0.398,0.5833)--(0.415,0.5833);
 	\node[left] at (0.4,0.5833) {\small $\bp$};
 	\node[left] at (0.42,0.595) {\small $c_3$};
 	\node[left] at (0.42,0.7) {\small $c_1$};
 	\end{tikzpicture}
 	
    \caption{The graph of the map $T^2_s|_{[c_2, c_1]}$ where $s=1.4<\sqrt{2}$. Dashed lines denote the core of the renormalized maps {\allteal $T^2_s|_{[c_2, \pi]}$ and $T^2_s|_{[\pi, c_1]}$.}}
 	\label{fig:tents}
 \end{figure}

%
%The result below shows that the two examples above are actually related.
A last fundamental result we need is that there are enough tent maps to cover all types of chaotic dynamics of T-unimodal maps.
\begin{theoremX}[Milnor \& Thurston, 1977~\cite{MT88} (Thm. 7.4). See also Thm. 3.4.27 in~\cite{BB04}]\label{thm:C}
    Let $\cT$ be the innermost cyclic trapping region of a T-unimodal map $f$ and denote its period by $k$. 
    Then $f^k|_{J_1(\cT)}$ is T-unimodal and is topologically conjugated to a tent map $T_s$ for some $s\in(\sqrt{2},2]$.
\end{theoremX}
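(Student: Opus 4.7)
I would reduce the theorem to an application of the Milnor--Thurston classification of unimodal dynamics via kneading invariants. Writing $g := f^k|_{J_1(\cT)}$, I would (i) show $g$ is T-unimodal, (ii) use innermost-ness of $\cT$ to force $g$ to be topologically exact on its core, and (iii) apply Milnor--Thurston to obtain the conjugacy with a tent map.

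\emph{Step 1 (unimodality of $g$).} Because $\cT$ has period $k$, $c\in J_1$, and $J_1,\dots,J_k$ have pairwise disjoint interiors, the critical point appears only inside $J_1$. Hence in the composition $g=f\circ\cdots\circ f$ only the outermost $f$ folds while each $f|_{J_i}$ for $i\ge 2$ is strictly monotonic, so $g$ has $c$ as its unique turning point. If $p$ is the periodic endpoint of $J_1$ and $\hat p$ its conjugate, then $g(p)=p$ since $p$ has $f$-period dividing $k$, and $g(\hat p)=f^{k-1}(f(\hat p))=f^{k-1}(f(p))=p$, so the unimodal boundary condition holds.

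\emph{Step 2 (T-unimodality and topological exactness).} Attracting cycles, homtervals, and infinite nodal structures for $g$ all lift to the corresponding objects for $f$: for instance, a $g$-homterval $H$ is also an $f$-homterval because $f^n(H)$ sits inside some $J_i$ not containing $c$ in its interior whenever $k\nmid n$. Combined with Prop.~\ref{prop:homtervals}, this shows that $g$ inherits all parts of Assumption~(T). Hence $g$ is T-unimodal, so by Cor.~\ref{cor:Tunim} it has a unique chaotic attractor, and by Prop.~\ref{prop:core} its core $K(g)$ is a period-$1$ tight trapping region. If $g$ admitted a cyclic trapping region $\cT'=\{K_1,\dots,K_{k'}\}$ of period $k'\ge 2$, the $kk'$ intervals $\{f^i(K_1):0\le i<kk'\}$ would form a cyclic trapping region of $f$ of period $kk'>k$ strictly refining $\cT$, contradicting its innermost-ness. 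Together with Theorem~A, this forces the attractor of $g$ to equal $K(g)$ and $g|_{K(g)}$ to be topologically exact.

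\emph{Step 3 (conjugacy with a tent map).} By the Corollary to Prop.~\ref{prop:homtervals}, $g$ has no homtervals, so by the Milnor--Thurston theorem on kneading invariants (\cite{MT88}; see also Thm~3.4.27 in~\cite{BB04}) $g$ is topologically conjugated to the unique tent map $T_s$ realising its kneading sequence. Topological exactness on a single-component core is a conjugacy invariant and, by Theorem~B, holds for $T_s$ if and only if $s\in(\sqrt 2, 2]$, so the resulting $s$ must lie in that range. The principal technical challenge is the rigorous invocation of Milnor--Thurston: one must verify that the kneading sequence of $g$ is actually realised by some tent map in $(\sqrt 2, 2]$ (using monotonicity of kneading invariants over the tent family together with the matching topological entropy) and that the resulting semi-conjugacy is a homeomorphism, where the absence of homtervals plays the key role.
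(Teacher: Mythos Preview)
The paper does not supply its own proof of this theorem; it is stated as a citation to Milnor--Thurston (Thm.~7.4 of~\cite{MT88}) and to Thm.~3.4.27 in~\cite{BB04}, with no accompanying proof environment. The only piece the paper argues separately is the T-unimodality of $f^k|_{J_1}$, which appears later as Proposition~\ref{prop:unimodal} (for an arbitrary repelling node, not just the innermost one), along lines very close to your Step~2.

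Your outline is a reasonable sketch of how the cited result is actually established. Two small points are worth flagging. In Step~1 it is the \emph{first-applied} (rightmost in $f\circ\cdots\circ f$) copy of $f$, acting on $J_1\ni c$, that folds, not the outermost one; the conclusion that $g$ has a unique turning point at $c$ is unaffected. In Step~2, the Jonker--Rand theorem (Theorem~A here) gives only transitivity of $g$ on its attractor, not topological exactness; to upgrade you need the standard fact that a transitive unimodal map on an interval with no period-$2$ invariant decomposition is topologically exact, or you can bypass exactness altogether and rely directly on the no-homtervals condition in Step~3. The heart of the matter, as you correctly identify, is the Milnor--Thurston step: the semi-conjugacy to $T_s$ with $\log s = h_{\mathrm{top}}(g)$ always exists, and it is a genuine conjugacy precisely because $g$ has no homtervals; the parameter then lands in $(\sqrt 2,2]$ because innermost-ness rules out a further renormalisation.
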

\begin{corollary}
\olive{
  Let $f$ be a T-unimodal map with two nodes and suppose that its attractor, as a trapping region, is not cyclic.
%  \blue{What is a cyclic attractor?}. \red{Every attractor is a trapping region, so I meant "not a cyclic trapping region". In other words, I am assuming that no boundary point of the attractor lies on a periodic orbit.}
  Then $f$ is topologically conjugated to a tent map $T_s$ for some $s\in(\sqrt{2},2)$. 
  If $f$ has just one node, then it is topologically conjugated to $T_2$.
  }
\end{corollary}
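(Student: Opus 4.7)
The plan is to apply Theorem~\ref{thm:C} by showing, in both cases, that the innermost cyclic trapping region of $f$ is $\{[a,b]\}$ itself (so $k=1$). Granted this, $f=f^{1}|_{[a,b]}$ is conjugated to a tent map $T_s$ for some $s\in(\sqrt{2},2]$; since conjugate maps have the same number of nodes, Theorem~\ref{thm:B} yields $s=2$ in the one-node case and $s\in(\sqrt{2},2)$ in the two-node case.

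First I would note that $\{[a,b]\}$ is always a period-$1$ cyclic trapping region: unimodality gives $f(a)=f(b)=a$, so $a,b$ are conjugated and $a$ is fixed. To identify it with the innermost one I argue by contradiction: assume the innermost cyclic trapping region $\cT$ is not $\{[a,b]\}$, with period $k\geq 1$ and repelling boundary orbit $\gamma(\cT)$. Set $g:=f^{k}|_{J_1(\cT)}$: by Theorem~\ref{thm:C}, $g$ is T-unimodal and conjugated to a tent map, hence by Theorem~\ref{thm:B} has one or two nodes. If $g$ has one node, its attractor is $J_1(\cT)$, so the attractor of $f$ equals $\bigcup_{i=0}^{k-1}f^i(J_1(\cT))=\cT$, which is a cyclic trapping region; in the two-node case this contradicts the non-cyclic attractor hypothesis, and in the one-node case (where the attractor of $f$ is all of $[a,b]$) it forces $\cT=\{[a,b]\}$, a contradiction. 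If $g$ has two nodes, its repelling node is $\{p_1\}$ with $p_1\in\gamma(\cT)$; lifting the distinctness of $\{p_1\}$ from the attractor of $g$ back to $f$ makes $\gamma(\cT)$ a chain-recurrence class of $f$ distinct from both $\{a\}$ and the attractor $A$, giving $f$ at least three nodes in the two-node case and at least two nodes in the one-node case --- contradictions in both.

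The hard part will be the lifting step: translating distinctness of nodes in $g=f^{k}|_{J_1(\cT)}$ into distinctness of nodes in $f$. The idea is to propagate non-chain-equivalence via Lipschitz continuity: an $f$-$\epsilon$-chain whose length is a multiple of $k$ subsamples, at every $k$-th step, to a $g$-$C\epsilon$-chain with $C$ depending only on $\mathrm{Lip}(f)$ and $k$. Thus non-chain-equivalence in $g$ at some scale $\epsilon_0$ implies non-chain-equivalence in $f$ at scale $\epsilon_0/C$; the trapping property of $\cT$ keeps the relevant chains inside $\cT$, making the subsampling meaningful.
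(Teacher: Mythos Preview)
The paper states this corollary without proof, treating it as an immediate consequence of Theorem~C (and, implicitly, of the later bijection between repelling nodes and cyclic trapping regions in Proposition~\ref{prop:repellTrap}). Your strategy---show that the innermost cyclic trapping region is $\{[a,b]\}$, apply Theorem~C with $k=1$, then pin down $s$ via the node count and Theorem~B---is exactly the intended one, and your case split on the number of nodes of $g=f^{k}|_{J_1(\cT)}$ is sound.

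There is, however, a genuine gap in the ``hard part''. T-unimodal maps are only assumed $C^0$; nothing in Assumption~(T) gives a Lipschitz bound, so $\mathrm{Lip}(f)$ need not be finite and the constant $C$ you invoke is undefined. The subsampling idea survives if you replace the Lipschitz estimate by uniform continuity of $f,\dots,f^{k}$ on the compact interval $[a,b]$: given $\epsilon_0>0$ choose $\delta>0$ so that $d(x,y)<\delta$ implies $d(f^{j}(x),f^{j}(y))<\epsilon_0$ for $1\le j\le k$; then an $f$-$\delta$-chain subsamples to an $f^{k}$-$\epsilon_0$-chain. You must also justify why the subsampled points lie in $J_1(\cT)$ (otherwise they are not a $g$-chain at all); the trapping property only keeps the chain close to the cycle $J_1\to J_2\to\cdots$, not inside it, so a small projection argument is still needed.

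You can sidestep the lifting entirely. The orbit $\gamma(\cT)$ is periodic, hence chain-recurrent, hence contained in one of the two nodes of $f$. You already ruled out $\gamma(\cT)\subset N_0=\{a\}$. If $\gamma(\cT)\subset N_1$, then since the attractor is non-cyclic one has $N_1=A$ (this is the content of Proposition~\ref{prop:A}), so $p_1\in A\cap J_1(\cT)$; but $A\cap J_1(\cT)$ is the attractor of $g$, which forces $g$ to have a single node---your sub-case~A, already dispatched. Thus sub-case~B is vacuous once $N_1=A$ is known, and no chain-subsampling is required.
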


Since there are parameter values $s$ for which the critical point of $T_s$ is periodic, there are tent maps for which the dynamics within the attractor is not of S-unimodal type.
\allviolet
Notice that, on the other side, every tent map with $s>1$ is T-unimodal (see Thm.~B).
The example below shows that, though, not all T-unimodal maps are topologically conjugated to tent maps.
\begin{example}
  Let $\mu_0$ be any parameter value of the logistic map within a window of the bifurcation diagram. 
  Then the second node of the graph of $\ell_{\mu_0}$ is a Cantor set (see Fig.~2 in~\cite{DLY20} for concrete examples based on the period-3 window). 
  Assume that the attractor corresponding to $\mu_0$ is chaotic (this happens for a set of parameter values of positive measure within the window). 
  Then $\ell_{\mu_0}$ is T-unimodal but not topologically conjugated to a tent map, since the only repellors of the tent map are periodic orbits.
\end{example}
Notice, finally, that there are T-unimodal maps that are neither conjugated to a S-unimodal map neither to a tent map (see Ex.~\ref{ex:Tu}).
%\blue{This is a good place for the example of a T-unimodal map which is not tent or S-unimodal.}
%\red{True, but seems to me that, in order to understand the example we put later in the draft, one needs results that are proved below. That is why I put it down there.} \blue{Okay, we can just refer to it here!}
%
\section{Nodes and trapping regions of T-unimodal maps}
In this section we show that the qualitative dynamics of a T-unimodal map has the following structure. 
%Its chain-recurrent points are naturally sorted in components that we call nodes. 
Its nodes are naturally ordered with respect to the maximum value the map achieves on them.
%distance from the critical point. 
We denote nodes as $N_0,\dots,N_p$ using the ordering above.
For each repelling node $N_i$ there is a cyclic trapping region $\cT(N_i)$ that has in common with $N_i$ a periodic orbit and such that no point of $N_i$ lies in $\Jall(N_i)$.
For $0<i<p$, the node $N_i$ can be characterized as the set of all chain-recurrent points in $\Jall(N_{i-1})\setminus\Jall(N_i)$.
For each node $N_i$, $i=0,\dots,p-1$, its core $\cK(N_i)$ is a tight trapping region strictly contained in  $\Jall(N_i)$.
The attractor $A$ is always a tight trapping region. 
If it is not cyclic, then is the core of the last repelling node: $A=K(N_{p-1})$.
If it is cyclic, then the periodic orbit on its boundary is in common with a repelling Cantor set. 
%\blue{Maybe your suggestion to put this at the beginning of the section makes sense. First we explain everything in plain language and then jump into technicalities :)}

%Throughout the rest of the article we will use the tent map as a source of examples but all our general results do hold for any T-unimodal map. 
\allblack

\medskip\noindent
{\bf Nodes and cyclic trapping regions.}
Let $N$ be a repelling node of a T-unimodal map $f$. 
\iffalse
\blue{What is a repelling node? \red{Added def at line 108,109} It might be good to characterize nodes for the tent family completely.} 
\allred
You are right, I will. Quick answer in the meantime: 

\medskip Repelling nodes.\par Tent map: only periodic orbits.\par T-unimodal maps: periodic orbits, Cantor sets with a dense orbit.

\medskip Attracting nodes.\par Tent map: only a cycle of intervals.\par T-unimodal maps: a cycle of intervals or a degenerate case of a cycle of intervals having a periodic orbit in common with a repelling Cantor set (see theorem G in my 'backward asymptotics' paper). This is the type of attracting node at the right endpoint of each window in the logistic map.
\fi
%
%\medskip\allblack
\allteal
Since $N\subset X$ is closed and $c\not\in N$ (see Thm~A), $f|_N$ takes a maximum value $n<c_1$. 
We denote by $J_1(N)$ the set of all $x\in[a,b]$ with $f(x)\geq n$. 
At least one of the endpoints of $J_1(N)$ belongs to $N$ and no point of $N$ is closer to $c$ than that point.
%the connected component of the complement of $N$ in $[a,b]$ containing $c$ is an open interval. 
%closed interval $I$ point $p(N)\in N$ where the minimum distance between $N$ and $c$ is achieved. This distance cannot be zero since $c$ is always in the basin of the attractor.
%
%\begin{definition}
%    We set $J_1(N)=\{f(x)\geq n\}$.
    %the closure of the open interval containing $c$ in the complement of $N$. 
    %We denote by $p_1(N),q_1(N)\in N$ the endpoints of $J_1(N)$.
    %$[a,b]\setminus N$ containing $c$.
    %interval of all $x\in X$ such that
    %$$
    %f(x)\geq f(p(N)).
    %$$
%\end{definition}
%
%Note that the endpoints of $J_1(N)$ are $p(N)$ and $\widehat{p(N)}$.
%
%\begin{proposition}
%    Let $M,N$ two distinct nodes of $f$.
%    Then either $J_1(M)\subset J_1(N)$ or
%    $J_1(N)\subset J_1(M)$.
%\end{proposition}
%
\allblack
\begin{proposition}   
    In any T-unimodal map, there is a natural linear order among repelling nodes defined by
    $M>N\hbox{ iff }J_1(M)\supset J_1(N).$
\end{proposition}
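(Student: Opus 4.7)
The plan is to verify the three defining properties of a linear order: comparability of any two elements, transitivity, and antisymmetry. Transitivity is inherited directly from set inclusion, so the substantive content lies in comparability and antisymmetry.

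For comparability, the key observation is structural: as established in the paragraph above, $J_1(N) = \{x : f(x) \geq n\}$ with $n < c_1 = f(c)$, so unimodality of $f$ forces $J_1(N)$ to be a closed interval $[\alpha_N, \beta_N]$ containing $c$, with $f(\alpha_N) = f(\beta_N) = n$ and $\beta_N = \hat\alpha_N$. Since any two closed intervals sharing the common point $c$ are nested, for any pair of repelling nodes $M, N$ either $J_1(M) \subseteq J_1(N)$ or $J_1(N) \subseteq J_1(M)$, which is exactly the comparability required.

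The heart of the proof is antisymmetry: if $J_1(M) = J_1(N)$ then $M = N$. Under this hypothesis the two maxima coincide, so write $n = \max f|_M = \max f|_N$ and $J_1(M) = J_1(N) = [\alpha,\beta]$ with $\beta = \hat\alpha$. By the remark preceding the proposition, at least one endpoint of $[\alpha,\beta]$ belongs to $N$ and at least one belongs to $M$. If the same endpoint, say $\alpha$, lies in $M \cap N$, then disjointness of distinct chain-recurrent equivalence classes immediately yields $M = N$. Otherwise $\alpha \in N$ and $\beta \in M$ (up to relabeling); since $\alpha$ and $\beta$ are conjugated, $f(\alpha) = f(\beta) = n$, and the invariance of both nodes under $f$ places $f(\alpha) \in N$ and $f(\beta) \in M$, so $f(\alpha) = f(\beta) \in M \cap N$ and again $M = N$.

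The only genuinely non-trivial step is this last one, and it is where the argument must combine two different ingredients: the conjugation structure $\beta = \hat\alpha$ of unimodal preimages, and the fact that distinct nodes are disjoint closed invariant sets. Everything else is a routine consequence of the fact that a family of closed intervals sharing a common point is totally ordered by inclusion.
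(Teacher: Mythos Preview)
Your proof is correct and follows essentially the same approach as the paper's. The paper compresses everything into one line: for distinct nodes $M,N$ one has $M\cap N=\emptyset$, and since the maximum \emph{value} $m=\max f|_M$ lies in $M$ by forward invariance (and likewise $n\in N$), these maxima must differ, which forces strict nesting of the corresponding superlevel intervals about $c$. Your antisymmetry argument via the endpoint cases is just a slightly more explicit unpacking of this same step --- in your ``different endpoints'' case, the common image $f(\alpha)=f(\beta)$ is exactly the shared maximum value $m=n$.
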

\begin{proof}
    \allteal
    Let $M,N$ be distinct nodes of $f$. Then, since $M\cap N=\emptyset$ and $f$ is unimodal, the maxima of $f|_M$ and of $f|_N$ are different and either $J_1(M)\subset J_1(N)$ or $J_1(N)\subset J_1(M)$.
%    Let $x$ (resp. $y$) be the closest point of $N$ (resp. $M$) to $c$. Then $f(x)\neq f(y)$ since both $M$ and $N$ are forward invariant.
\end{proof}
\allteal
From now on, throughout the article we will denote by $p+1$ the number of nodes of a map $f$ and by $N_0,\dots,N_p$ the nodes themselves ordered in such a way that \olive{$N_p$ is the attractor and $N_i<N_j$ if and only if $i<j$ for all repelling nodes.} 

Note that, in principle, $p$ can be any natural integer including 0 or even infinity. 
For instance, in case of the tent map $T_s$, we have $p=0$ if and only if $s\in(0,1)$ or $s=2$, $p=1$ if and only if $s\in[\sqrt{2},2)$ and so on.
Recall that the case $p=\infty$ happens if and only if the attractor is a Cantor set (see Thm~A). 
%and so $p<\infty$ for each $T$-unimodal map. 
%Notice, finally, that $N_p$ is always the attracting node since the critical point $c$, where $f$ attains its maximum value, belongs to the attractor. \blue{Now $N_p$ is the attracting node by definition.}

\allblack
%
% \begin{definition}
%     \label{def:ic}
%     A {\bf period-\BF$k$ (cyclic) trapping region} is a trapping region $\cT=\{\J_1,J_2,\dots,J_{k-1}\}$ such that $J_1$ has \teal{conjugate} endpoints $p_1$ and $\hat p_1$ and the one denoted by $p_1$ is periodic.
%     We denote by $|\cT|=k$ the period of  $\cT$ and by $\orbit(\cT)$ the periodic orbit containing $p_1$.
% \end{definition}

%
%\begin{remark}
%    Add remark on minimal cyclic trapping regions.
%    The definition of 
%\end{remark}
%
\begin{lemma}
    \label{lemma:downstream}
    Let $N$ be a repelling node of a T-unimodal map $f$.
    Then:
    \begin{enumerate}
        \item every chain-recurrent point in $J_1(N)$ is downstream from $N$;
        \item $J_1(N)$ is forward-invariant by some positive power of $f$;
        \item one of the endpoints of $J_1(N)$ is periodic.
    \end{enumerate}
\end{lemma}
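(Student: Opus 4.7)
I'll prove items (3), (2), (1) in that order, as (2) uses the period from (3) and (1) uses the forward-invariance from (2). Let $\eta$ denote the endpoint of $J_1(N)$ that lies in $N$, assumed without loss of generality to satisfy $\eta>c$; by the setup preceding the lemma, $f(\eta)=n=\max f|_N$. For (3), $f$-invariance of $N$ forces the forward orbit $\{f^j(\eta)\}_{j\geq 0}$ to stay in $N$, while the characterization of $\eta$ as the point of $N$ closest to $c$ gives $|f^j(\eta)-c|\geq|\eta-c|$ for every $j\geq 0$. Since $\eta\in N\subset\cR_f$ is chain-recurrent, its forward orbit must return arbitrarily close to $\eta$, but the closest-to-$c$ constraint forbids such a return through points in the open gap $J_1(N)\setminus\{\eta\}$; on the other side, any accumulation along $N$ that does not already hit $\eta$ would force either a wandering interval or an attracting cycle, both excluded by Assumption~(T). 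Hence $f^k(\eta)=\eta$ for some finite $k\geq 1$.

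For (2), conjugacy of the endpoints $\alpha,\beta$ of $J_1(N)$ gives $f^k(\alpha)=f^k(\beta)=f^{k-1}(n)=\eta$. Therefore $f^k(J_1(N))$ is a closed interval both of whose endpoints equal $\eta$, lying entirely on one side of $\eta$; write it $[m,\eta]$, where $m$ is the extremum of $f^k|_{J_1(N)}$. To show $m\in J_1(N)$, I argue by contradiction: if $m$ lay outside $J_1(N)$, iterating $f^k$ would drive the orbit of the critical point out of $J_1(N)$ permanently after finitely many steps, and in one dimension such an escape could only converge to an attracting cycle or carve out a wandering interval, both excluded by~(T).

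For (1), fix $x\in J_1(N)\cap\cR_f$ and $\epsilon>0$. Start at $\eta\in N$ and trace the periodic orbit of $\eta$ for $k-1$ steps to reach $f^{k-1}(\eta)$. Since $f(f^{k-1}(\eta))=\eta$, the $\epsilon$-chain may be extended by a single jump to any $y\in[a,b]$ with $|y-\eta|<\epsilon$; choose $y\in J_1(N)$ close to but distinct from $\eta$. The corollary after Proposition~\ref{prop:homtervals} implies $f$ has no homtervals, so the forward orbit of $y$ eventually visits every neighborhood of $c$ and spreads throughout $J_1(N)$, which is forward-invariant under $f^k$ by (2). Using topological exactness of $f$ on each component of the attractor together with density of preimages of chain-recurrent points in $J_1(N)$, the orbit of $y$ comes within $\epsilon$ of $x$; a final $\epsilon$-jump to $x$ then closes the chain from $N$ to $x$.

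The most delicate step is (1): the orbit of the small perturbation $y$ must $\epsilon$-approach every chain-recurrent $x\in J_1(N)$, including those lying in repelling nodes $M>N$ and not only in the attractor $A$. This requires a spreading result showing that preimages of every chain-recurrent point in $J_1(N)$ are dense there, which in turn may lean on the tower-structure of nodes inside $J_1(N)$ — a structure that may have to be built inductively alongside the lemma rather than assumed beforehand.
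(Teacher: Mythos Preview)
Your order of proof is the source of the trouble: the paper proves (1) $\Rightarrow$ (2) $\Rightarrow$ (3), and this order is essential, not cosmetic.

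Your argument for (3) assumes that because $\eta$ is chain-recurrent, ``its forward orbit must return arbitrarily close to $\eta$''. That is recurrence, not chain-recurrence, and it is not available here: the paper itself later shows (Prop.~\ref{prop:A}) that $\cR_f\supsetneq\Omega_f$ can occur for T-unimodal maps, so chain-recurrent points need not have orbits that come back. The vague clause about accumulation forcing a wandering interval or attracting cycle does not repair this. Likewise in (2), nothing guarantees that $\eta$ is an endpoint of $f^k(J_1(N))$ --- both boundary points of $J_1(N)$ map to $\eta$, but interior points may map past $\eta$ on either side --- and your escape argument for $m$ is not a proof. For (1) you invoke topological exactness of $f$ on $J_1(N)$; that is only known for the attractor (Thm.~B), and you yourself flag that the spreading argument may be circular.

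The paper's route avoids all of this. For (1), it looks at the interval $K_\varepsilon=[p_0,p_0+\varepsilon]$; by absence of homtervals some iterate $f^r(K_\varepsilon)$ contains $c$, while $f^r(p_0)\in N$ lies outside the interior of $J_1(N)$ (since $p_0$ is the point of $N$ closest to $c$). Connectedness then forces $f^r(K_\varepsilon)$ to cover an entire half $[p_0,c]$ or $[c,\hat p_0]$ of $J_1(N)$, producing $\varepsilon$-chains from $p_0$ to every point there --- no exactness, no periodicity needed. Point (1) then yields the crucial fact that no preiterate of $N$ lies in the interior of $J_1(N)$ (such a preiterate would be both upstream and downstream from $N$, hence in $N$, contradicting the closest-to-$c$ property). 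This immediately gives (2) via the intermediate value theorem, and (3) follows since $f^r(p_0)\in N\cap\overline{J_1(N)}$ must be an endpoint.
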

\begin{proof}
    (1) Let $p_0$ be an endpoint of $J_1(N)$ belonging to $N$, so that $f|_N$ takes its maximum value at $p_0$, and assume, for discussion sake, that $p_0<c$ (the argument for $p_0>c$ is virtually the same).
    Notice that no point of $N$ is closer to $c$ than $p_0$ since, at the left of $c$, $f$ is monotonically increasing.  Set $K=K_\varepsilon=[p_0,p_0+\varepsilon]$. 
    Since $f$ has no homtervals, for every $\varepsilon>0$ there is some integer $r>1$ such that $f^r(K_\varepsilon)$ contains $c$ in its interior.

    Since $p_0$ is the closest point of $N$ to $c$, $f^r(p_0)$ cannot be in the interior of $\J_1(N)$. Then $f^r(K_\varepsilon)$ must contain at least either $[p_0,c]$ or $[c,\hat p_0]$. 
    Denote by $\J_0$ the one of the two intervals above that $K_\varepsilon$ eventually maps onto for arbitrarily small $\varepsilon$.
    Hence, for arbitrarily small $\varepsilon$, there are $\varepsilon$-chains from $p_0$ to each point within $\J_0$. Notice that chain-recurrent points $x$ and $\hat x$ belong to the same node and that, if $p_0$ is upstream from any point of a node, it is upstream from each point of that node.
    
    \medskip\noindent
    (2) Let $\J_0=[p_0,c]$ and denote by $\inter{J}_1$ the interior of $\J_1(N)$.
    Since $f$ has no homtervals, there is some integer $r\geq1$ such that $f^r(\J_0)=f^r(\J_1(N))$ contains $c$ in its interior. In other words, there is a point $x$ in $\inter{J}_1$ such that $f^r(x)=c\in\inter{J}_1$.
    We claim that $\inter{J}_1$ is invariant under $f^r$. 
%    \blue{I am a bit confused by $r$ and $k$ in the remaining of this paragraph. Should all $k$ be $r$ until the last sentence?}
%    \red{Indeed, just a typo. Fixed, thanks!} \blue{How are you so fast? :)} \red{lol, seems that by chance we were working at the paper at the same time}
    Indeed, by continuity, if there were a $y\in\inter{J}_1$ with $f^r(y)\not\in\inter{J}_1$, there would be some $\xi\in\inter{J}_1$ between $x$ and $y$ such that either $f^r(\xi)=p_0$ or $f^r(\xi)=\hat p_0$. 
    This, though, is impossible because, \teal{by point 1 above},  there are no preiterates of $N$ in $\inter{J}_1$.
    
    \medskip\noindent
    (3) By continuity, the common value of $p_0$ and $\hat p_0$ under $f^r$ must belong to $\J_1(N)$ and, since $N$ is forward invariant and there are no points of $N$ in $\inter{J}_1$, it can only be either $p_0$ or $\hat p_0$, namely either $f^r(p_0)=p_0$
    or $f^r(\hat p_0)=\hat p_0$,
    namely one of the two endpoints is periodic.
    
    % Assume now, for discussion sake, that $p_0$ is the periodic point, so that $f^r(J_1)\cap J_1\supset\{p_0\}$. 
    % These two intervals can be on the same side of $p_0$ or on opposite sides. 
    % In the first case, the period of $p_0$ is equal to $r$; in the second case, it is equal to its double. 
%    $r=2k$. \red{Some words are needed to explain why the interiors of the $J_i$ are disjoint.
\end{proof}
From now on, we will denote by $p_1(N)$ the periodic point of $N$ on which the maximum value of $f|_N$ is achieved. Recall that the endpoints of the interval $J_1(N)$ are $p_1(N)$ and its conjugate.
\begin{proposition}
    \label{prop:repellTrap}
    Let $f$ be a T-unimodal map. 
    There is a natural bijection $N\to\cT(N)$ between the repelling nodes of $f$ and its cyclic trapping regions that
    %Let $N$ be a repelling node of $f$. Then there is a cyclic trapping region 
    satifies the following properties:
    \begin{enumerate}
                \item $J_1(\cT(N))=J_1(N)$;
                \item $|\cT(N)|$ is equal to either the period of $p_1(N)$ or to its double.
%                \item there are no points of $N$ in the interior of $J_1(\cT(N))$.
    \end{enumerate} 
    The inverse of this map sends a cyclic trapping region $\cT$ into the node containing $\gamma(\cT)$.
%            and whose period $r$ is either equal to $k$ or to $2k$.
\end{proposition}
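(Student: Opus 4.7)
The plan is to construct the map $N \mapsto \cT(N)$ explicitly from $J_1(N)$, verify the defining properties of a cyclic trapping region using Lemma~\ref{lemma:downstream}, and then produce an inverse. Given a repelling node $N$, let $p_1 = p_1(N)$ be the periodic endpoint of $J_1(N)$ furnished by Lemma~\ref{lemma:downstream}(3), and let $k$ denote its period. By Lemma~\ref{lemma:downstream}(2) there is a smallest positive integer $r$ such that $f^r(J_1(N)) \subseteq J_1(N)$, and I would set $\cT(N) = \{f^{i-1}(J_1(N)) : i = 1, \dots, r\}$ with $J_1(\cT(N)) = J_1(N)$.

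First I would verify that $\cT(N)$ is a cyclic trapping region in the sense of Definition~\ref{def:trap}. The conditions $c \in J_1$ and $f(J_i) \subseteq J_{i+1}$ follow directly from the construction and from the fact that $p_1$ is the point of $N$ closest to $c$. The endpoints of $J_1(N)$ are $p_1$ and $\hat p_1$ by definition, so they are conjugated, and $p_1$ is periodic, so $\cT(N)$ is cyclic with $\gamma(\cT(N))$ equal to the orbit of $p_1$. The nontrivial check is that the intervals $f^{i-1}(J_1(N))$ for $i = 1, \dots, r$ have pairwise disjoint interiors. For this I would argue by contradiction: if two iterates overlapped then their union (together with intermediate iterates) would be an interval strictly invariant under some power of $f$ whose boundary would be a preiterate of an endpoint of $J_1(N)$ sitting in $\inter{J_1(N)}$, contradicting the same preiterate argument used in the proof of Lemma~\ref{lemma:downstream}(2).

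Next I would establish the period relation $|\cT(N)| \in \{k, 2k\}$. Since $f(p_1) = f(\hat p_1)$, one computes $f^k(\hat p_1) = f^{k-1}(f(p_1)) = p_1$, so both endpoints of $J_1(N)$ map to $p_1$ under $f^k$; in particular $p_1 \in f^k(J_1(N))$. A value $r < k$ would force $p_1$ to have a strictly smaller period, hence $r \geq k$. Conversely, because the interior of $J_1(N)$ contains no preiterates of $p_1$, the image $f^k(J_1(N))$ must lie entirely on one side of $p_1$: either inside $J_1(N)$, giving $r = k$, or on the opposite side, in which case iterating $k$ more steps and using the unimodality of $f^k$ on this interval yields $f^{2k}(J_1(N)) \subseteq J_1(N)$, so $r = 2k$.

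Injectivity of $N \mapsto \cT(N)$ is immediate from the linear order on repelling nodes established earlier in the section, since $N \neq M$ implies $J_1(N) \neq J_1(M)$. For surjectivity, given a cyclic trapping region $\cT$ the periodic orbit $\gamma(\cT)$ is closed, invariant, and chain-recurrent, hence contained in a single node $N$; the point of $\gamma(\cT)$ on the boundary of $J_1(\cT)$ closer to $c$ must coincide with $p_1(N)$, so $J_1(N) = J_1(\cT)$ and the construction recovers $\cT(N) = \cT$. The main obstacle will be the disjointness of the interiors of the iterates of $J_1(N)$, which requires careful pullback through $f$ of would-be overlap points, together with the case analysis to decide whether the trapping-region period equals $k$ or $2k$.
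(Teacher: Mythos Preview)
Your construction is the same as the paper's: take the smallest $r$ with $f^r(J_1(N))\subset J_1(N)$, set $\cT(N)=\{J_1(N),f(J_1(N)),\dots,f^{r-1}(J_1(N))\}$, and then argue $r\in\{k,2k\}$ via the same-side/opposite-side dichotomy for $f^k(J_1(N))$ relative to $p_1(N)$.

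The one place where the paper is noticeably cleaner is the disjointness of the interiors. Rather than your contradiction through an invariant union, the paper argues directly that \emph{no point upstream from $N$} lies in the interior of any $J_{i}=f^{i-1}(J_1(N))$: this holds for $J_1$ by Lemma~\ref{lemma:downstream}(1), for $J_2=[f(p_1),c_1]$ because $f(p_1)=f(\hat p_1)$, and then propagates to each later $J_i$ since $f$ restricted to the complement of $J_1(N)$ is a homeomorphism. Since the endpoints of every $J_i$ are iterates of $p_1$ (hence in $N$, hence upstream from $N$), none of them can sit in the interior of another $J_j$, and disjointness follows immediately. This bypasses the delicate tracking of boundaries and preiterates in your overlap argument. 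On the other hand, you do more than the paper on the bijection itself: the paper only constructs $N\mapsto\cT(N)$ and names the inverse, whereas you sketch injectivity (via the linear order) and surjectivity. Be aware that your surjectivity step---that the periodic endpoint of $J_1(\cT)$ must be $p_1(N)$---needs the observation that no point of $N$ can lie in $\inter{J_1(\cT)}$, which is not automatic from what you have written.
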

\begin{proof}
    Let $r$ be the smallest integer such that $f^{r}(J_1(N))\subset J_1(N)$ -- this number exists because of the Lemma above -- and recall that at least one of the two endpoints belongs to $N$, and therefore all of its iterates do.
    Notice that there can be no point upstream from $N$ in the interior of any of the intervals $J_{k+1}=f^k(J_1(N))$.  Indeed no point upstream from $N$ can be in the interior of $J_1(N)$ by the Lemma above and the same holds for $J_2=[f(p_1(N)),c_1]$, since $f(p_1(N))=f(\hat p_1(N))$, and so for every other $J_k$, since the restriction of $f$ to any interval in the complement of $J_1(N)$ is a homeomorphism. 
    Hence the interiors of the $J_k$ are all pairwise disjoint, namely $\cT(N)=\{J_1(N),J_2,\dots,J_{r-1}\}$ is a trapping region. Moreover, by the Lemma above, the endpoint $p_1(N)$ of $J_1(N)$ is periodic and so $\cT(N)$ is cyclic. 
    
    Now, denote by $k$ the period of $p_1(N)$. Then $f^k(J_1(N))$ must have at least $p_1(N)$ in common with $J_1(N)$. If these two intervals are on the same side of $p_1(N)$, then $r=k$. If they are on opposite sides, then $r=2k$.
\end{proof}
%
%The proofs of the next two propositions can be found in~\cite{DLY20} and we do not repeat them here.
%
\allteal
%\bigskip
%Below is a series of elementary statements that might be useful, I'm writing them now mainly to avoid forgetting them. Some might be wrong.

\begin{proposition}
  \label{prop:unimodal}
  Let $f$ be T-unimodal map and $N$ a repelling node of $f$. Then the restriction of $f^r$ to $J_1(N)$ is T-unimodal, where $r\geq 1$ is as in Proposition~\ref{prop:repellTrap}.
\end{proposition}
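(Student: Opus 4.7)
The plan is to verify each defining property of T-unimodality for $g := f^r|_{J_1(N)}$: that it is a unimodal self-map of $J_1(N)$, has no wandering intervals, no attracting cycles, and finitely many nodes.

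First, I would check that $g$ is unimodal. By Lemma~\ref{lemma:downstream}(2), $J_1(N)$ is forward-invariant under $f^r$, so $g$ is a well-defined continuous self-map. The factorization $g = f^{r-1}\circ f|_{J_1(N)}$ is convenient: the inner factor maps $J_1(N)$ onto $J_2 = [n,c_1]$ (where $n$ is the maximum of $f|_N$) with a unique critical point at $c$, while $f^{r-1}|_{J_2}$ is a homeomorphism onto its image in $J_1(N)$, because $c$ lies in the interior of $J_1$ but in the interior of none of $J_2,\dots,J_r$ (these intervals have pairwise disjoint interiors by Proposition~\ref{prop:repellTrap}). Hence $g$ has $c$ as its unique critical point. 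For the endpoint condition, $f(p_1(N)) = f(\hat p_1(N)) = n$ and $f^{r-1}(n) = f^r(p_1(N)) = p_1(N)$, because $p_1(N)$ has period $k$ with $r\in\{k,2k\}$ by Proposition~\ref{prop:repellTrap}(2); thus both endpoints of $J_1(N)$ are sent by $g$ to the endpoint $p_1(N)$.

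Next I would rule out wandering intervals and attracting cycles by transferring any such object for $g$ back to $f$ on $[a,b]$ and invoking Assumption (T). Suppose $W\subset J_1(N)$ were wandering for $g$. Its $f$-iterates $f^{jr+i}(W)$, $0\leq i\leq r-1$, lie in $J_{i+1}$, so iterates with different residues modulo $r$ have disjoint interiors; iterates with the same residue cannot overlap either, for otherwise one could push the overlap forward by $f^{r-i}$ to produce overlapping $g$-iterates of $W$. Moreover, if $W$ lay in the basin of an $f$-periodic orbit $O$, then since $g^j(W)=f^{jr}(W)$ stays in the closed set $J_1(N)$, the $\omega$-limit of any $x\in W$ under $g$ would be a nonempty $g$-invariant subset of $O\cap J_1(N)$, hence a $g$-periodic orbit, contradicting the wandering of $W$ for $g$. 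Hence $W$ would be a wandering interval for $f$, contradicting Assumption (T). For attracting cycles, any attracting $g$-cycle $\{z_1,\dots,z_m\}\subset J_1(N)$ is contained in the $f$-orbit of $z_1$, and a $g$-basin of that cycle is automatically an $f$-basin of the $f$-orbit, producing an attracting $f$-cycle, again a contradiction.

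Finally, I would show finiteness of $g$-nodes by pulling chain recurrence back to $f$. Any $\epsilon$-chain for $g$ from $x$ to $y$ can be refined into an $\epsilon'$-chain for $f$ by inserting the intermediate iterates $f(x_i),\dots,f^{r-1}(x_i)$ between consecutive $x_i$ and $x_{i+1}$, where $\epsilon'$ depends on $\epsilon$ through the modulus of continuity of $f^{r-1}$ on the compact set $[a,b]$. Hence every $g$-chain-recurrent point is $f$-chain-recurrent, and two $g$-equivalent points are $f$-equivalent; each $g$-node therefore sits inside some $f$-node, so the $g$-nodes are at most as many as the $f$-nodes meeting $J_1(N)$, which are finitely many by Assumption (T). The main delicate point is the wandering-interval transfer, since the inner factor $f|_{J_1(N)}$ is not injective; the argument sidesteps this by pushing overlaps forward (which $f$ always does), never pulling them back through $f$.
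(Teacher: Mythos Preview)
Your treatment of unimodality, wandering intervals, and attracting cycles is correct and essentially identical to the paper's. The problem is the last step, on finiteness of nodes.

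You correctly argue that any $\epsilon$-chain for $g=f^r|_{J_1(N)}$ refines to an $\epsilon'$-chain for $f$, so $\mathcal{R}_g\subset\mathcal{R}_f$ and $g$-equivalence implies $f$-equivalence. From this you conclude that ``each $g$-node sits inside some $f$-node, so the $g$-nodes are at most as many as the $f$-nodes meeting $J_1(N)$''. That inference is invalid: you have produced a well-defined map from $g$-nodes to $f$-nodes, but nothing you proved forces it to be injective. A single $f$-node could, a priori, contain infinitely many distinct $g$-equivalence classes, since you have only shown that $g$-equivalence is \emph{finer} than $f$-equivalence on $J_1(N)$, not that they coincide.

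What is missing is precisely the direction the paper supplies: if $x,y\in J_1(N)$ are $f$-equivalent, then they are $g$-equivalent. The argument is that, for $\epsilon$ small compared to the gaps between the intervals $J_1,\dots,J_r$ of the cyclic trapping region, every $f$-$\epsilon$-chain starting in $J_1$ is forced to cycle through $J_1,\dots,J_r$ and returns to $J_1$ exactly at steps that are multiples of $r$; subsampling those steps and using uniform continuity of $f,\dots,f^{r-1}$ yields a $g$-$\delta$-chain. With both directions in hand the node map is a bijection onto the $f$-nodes meeting $J_1(N)$, and finiteness follows. Your refinement argument and the paper's subsampling argument are thus complementary halves of the same proof; either one alone leaves a gap.
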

\begin{proof}
    Denote by $J_1=J_1(N)$, and $J_i=f^{i-1}(J_1)$, $2\leq i<r$. The proof of Proposition~\ref{prop:repellTrap} implies that $f^r(J_1)\subset J_1$, and $J_1, \ldots, J_{r-1}$ are disjoint. 
    Note first that $f^r|_{J_1}$ is unimodal since $J_1$ contains $c$ in its interior, and both endpoints $p_1=p_1(N)$ and $\hat p_1$ are mapped to $p_1$ (recall the notation of Proposition~\ref{prop:repellTrap}). We show that it is $T$-unimodal.
    
    We first argue that a wandering interval of $f^r|_{J_1}$ is also a wandering interval of $f$. Assume that $J\subset J_1$ is a closed interval such that $f^{rk}(J)$ are mutually disjoint for all $k\geq 0$. Since $J_1, \ldots, J_{r-1}$ are mutually disjoint, it follows that $f^k(J)$ are mutually disjoint for all $k\geq 0$. Furthermore, if $p\in J_1$ is an $n$-periodic orbit of $f$, then $n$ is a multiple of $r$, and $p$ is an $n/r$-periodic orbit of $f^r$. Thus, if an interval $J\subset J_1$ is $f$-attracted to a periodic orbit of $f$, then it is also $f^r$-attracted to a periodic orbit of $f^r$. That implies that a wandering interval of $f^r|_{J_1}$ is also a wandering interval of $f$. Since $f$ is $T$-unimodal, and thus has no wandering intervals, we conclude that $f^r|_{J_1}$ has no wandering intervals. 
    
    Furthermore, an attractive $m$-cycle of $f^r|_{J_1}$ is an attractive $mr$-cycle of $f$. Since $f$ is $T$-unimodal, $f^r|_{J_1}$ does not have attracting cycles.
    
    Finally, note that every chain-recurrent point of $f$ in $J_1$ is a chain-recurrent point of $f^r|_{J_1}$. Furthermore, if $x,y\in J_1$ are chain-recurrent points of $f$, and $x$ is $f$-downstream from $y$, then $x$ is also $f^r$-downstream from $y$. So, if $x, y\in J_1$ are in the same node of $f$, then they are also in the same node of $f^r|_{J_1}$. Thus, if $f^r|_{J_1}$ would have infinitely many nodes, so would $f$, which is a contradiction with $f$ being $T$-unimodal. It follows that $f^r|_{J_1}$ is $T$-unimodal.
\end{proof}
\begin{proposition}
  \label{prop:surjective}
  Let $f$ be T-unimodal map such that the boundary fixed point $a$ is repelling. %\blue{One thought: why not take $[a,b]=[0,1]$?} \red{My point of view is that if one defines unimodal maps as maps on [0,1], then the restriction of a map to a $J_k$ interval of a cyclic trapping region is not unimodal, which seems silly.} \blue{Okay, it makes sense. They can always be renormalized back to $[0,1]$ but I guess it is easier to avoid that.} \red{Rather than easier, I'd say it's less abstract and more intuitive. Why would one say that a map is unimodal only if defined over $[0,1]$?? Looks unnecessarily artificial to me.} 
  
  Then: 
  %\blue{$p$ and $N_i$ have not been defined at this point yet} \red{Oops, thanks! I just added definitions in lines 265-272} \blue{Okay!}
  %
  \begin{enumerate}
      \item there is no chain-recurrent point in $(a,c_2)$;
      \item either $p=0$ and $N_0=[a,b]$ or $p>0$ and $N_0=\{a\}$; 
      \item if $p=1$, then $N_1=[c_2,c_1]$. 
      %\blue{This is Lemma 2.7 now, but it looks fine here.} 
      %\red{Lemma 2.7 is only about tent maps.} \blue{Sorry, I can't find what was Lemma 2.7 anymore.}
  \end{enumerate}
  %with only two nodes $N_0, N_1$. Then $N_0=\{a\}$ and $N_1=[c_2,c_1]$.
\end{proposition}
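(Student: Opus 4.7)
My plan is to prove the three claims in order, using two key structural facts: (a) for any node $N$ of $f$, one has $f(N)=N$ (by chain-transitivity), and hence $N \subset f([a,b]) = [a, c_1]$; and (b) whenever $p > 0$ (equivalently $c_2 > a$), the inequality $c_3 > c_2$ is strict. Fact (b) follows because $c_3 = c_2$ would make $c_2 \in (a,c)$ a fixed point of $f$, while $a$ being repelling together with an IVT argument on $f(x)-x$ precludes any other fixed point in $(a, c]$ (such a point would be forced to be attracting or semi-attracting, contradicting T-unimodality); hence $f(x)>x$ on $(a, c_2)$ would make $c_2$ attract from the left, a contradiction.

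For (1), if $p=0$ then $c_2 = a$ and the claim is vacuous. Otherwise, suppose for contradiction that $x \in (a, c_2)$ lies in a node $N$. By (a), $x$ admits a preimage $y \in N$; the right-branch preimage satisfies $y > c_1$ (since $f(y) = x < c_2 = f(c_1)$ with $f$ decreasing on $[c,b]$) and is excluded from $N \subset [a, c_1]$, so $y$ must be the left-branch preimage, which lies in $(a, c_2)$. Iterating backward produces a strictly decreasing sequence in $N \cap (a, c_2)$, converging to the unique fixed point of $f$ in $[a, c]$, namely $a$; closedness of $N$ gives $a \in N$. On the other hand, the forward orbit of $x$ enters and remains in the core $[c_2, c_1]$ (by (b) and Proposition~\ref{prop:core}, since only $a$ is trapped in $[a, c_2]$ forever, by a nested-interval argument), so $N$ also contains a core point. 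An $\epsilon$-chain analysis now yields a contradiction: by (b) and continuity of $f$ at $c_2$, there is $\epsilon_0 > 0$ such that for $\epsilon < \epsilon_0$ every leak from the core into $(c_2 - \epsilon, c_2)$ is returned to the core within one iterate; hence no $\epsilon$-chain starting in the core can reach $a$ when $\epsilon < c_2 - a$.

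For (2), if $p = 0$ the corollary to Theorem~\ref{thm:C} gives $f$ topologically conjugate to $T_2$, so $N_0 = [a, b]$. If $p > 0$, the $\epsilon$-chain barrier from (1) also shows that no $y \neq a$ is chain-equivalent to $a$, so $\{a\}$ is a (repelling) node; since $J_1(\{a\}) = [a, b]$ is maximal and the paper's indexing orders nodes by increasing $\max f|_N$, we have $N_0 = \{a\}$. For (3), when $p = 1$ claim (1) and (a) give $N_1 \subset [c_2, c_1]$; moreover $N_1$ is a period-$1$ trapping region (a period-$k$ attractor with $k > 1$ would introduce a repelling boundary cycle, forcing $p \geq 2$), hence a closed interval $[\alpha, \beta] \ni c$. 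The equation $f([\alpha, \beta]) = [\alpha, \beta]$ together with $\max f = c_1$ forces $\beta = c_1$ and $\alpha = c_2$, yielding $N_1 = [c_2, c_1]$.

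The main obstacle is the $\epsilon$-chain analysis isolating $a$ in its own chain-equivalence class; the strict inequality $c_3 > c_2$ from (b) is the crucial ingredient, providing a dynamical barrier at $c_2$ that prevents $\epsilon$-chains from drifting from the core down to $a$ for sufficiently small $\epsilon$.
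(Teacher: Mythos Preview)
Your approach to (1) and (2) is essentially correct and close in spirit to the paper's, with some differences worth noting. For (1), the paper argues directly that any $x_0\in(a,c_2)$ fails to be chain-recurrent: its forward orbit enters the core $[c_2,c_1]$ (via the no-homtervals property), and for small $\epsilon$ no $\epsilon$-chain can drift far from the core, so none can return to $x_0$. You instead take a detour through $f(N)=N$ to show $a\in N$; this is valid but unnecessary, since the contradiction already follows from the core point in $N$ and the inability to chain back to $x_0$. Your $\epsilon$-chain barrier should also account for chains that first leak to the right of $c_1$ (landing just left of $c_2$ after one iterate) before drifting further left; the paper handles this by observing such points return to the core in two steps. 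For (2) with $p=0$, you invoke the corollary to Theorem~\ref{thm:C}; the paper instead argues elementarily that the single node contains both $a$ and $c$, forcing $c_2=a$ and hence $c_1=b$, which avoids any concern about whether identifying the innermost cyclic trapping region relies on structure proved only later.

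The genuine gap is in (3). Your claim that a period-$k$ attractor with $k>1$ ``would introduce a repelling boundary cycle, forcing $p\ge 2$'' is not justified: any periodic orbit on the boundary of a tight trapping region lies \emph{in} the attractor, so it does not furnish a separate repelling node. What actually pins down $N_1=[c_2,c_1]$ when $p=1$ is the internal fixed point $\pi\in(c,c_1)$. Since $\pi$ is fixed it is chain-recurrent, and with $p=1$ and $\pi\neq a$ this forces $\pi\in N_1$. If $N_1$ were a period-$k$ cycle of intervals with $k>1$, then $f(\pi)=\pi$ together with $f(J_i)\subset J_{i+1}$ would place $\pi$ in every $J_i$; this is impossible for $k\ge 3$ (three closed intervals with disjoint interiors cannot share a point), and for $k=2$ it forces $J_1\cup J_2=[c_2,\pi]\cup[\pi,c_1]=[c_2,c_1]$. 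Either way $N_1=[c_2,c_1]$ as a set, after which your endpoint computation is fine.
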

 \begin{proof}
     %\begin{enumerate}
       %  \item 
       {\bf 1.} We assume that $c_1 < b$ or there is nothing to prove. Notice that, since by hypothesis $f$ has no attracting periodic orbits, by Prop.~\ref{prop:homtervals} $f$ has no homtervals. Hence, for any $x_0\in(a,c_2)$, there is some $k>0$ such that $c\in f^k([a,x_0])$, namely $f^k(x_0)\in[c_2,c_1]$.
       Recall from Corollary~\ref{cor:Tunim} that $c$ belongs to the attractor and that almost all points of $[a,b]$ are in the basin of attraction of the attractor.
       Hence $c\geq c_2$, since $c$ belongs to the attractor and the attractor lies necessarily in $[c_2,c_1]$ because that interval is a trapping region by Proposition~\ref{prop:core}.
       This means that every point in $(a,c_2)$ falls into $[c_2,c_1]$ in a finite number of iterations.
       By continuity, therefore, for $\varepsilon$ small enough, every $\varepsilon$-chain based at $x_0$ falls into $[c_2,c_1]$ in a finite number of iterations.
       Moreover, since $c_3>c_2$, every point close enough to $[c_2,c_1]$ falls into it in either two steps (if it lies at its right) or one step (if it lies on its left).
       Hence, for $\varepsilon$ small enough, every $\varepsilon$-chain based at a point in $[c_2,c_1]$ cannot get further than some $d(\varepsilon)$ from $[c_2,c_1]$ with $d(\varepsilon)\to0$ for $\varepsilon\to0$.
       The two observations above imply that no $\varepsilon$-chain based at $x_0\in(a,c_2)$ can get back at $x_0$ for $\varepsilon$ small enough.
         
       {\bf 2.} If $p=0$, namely there is a single node $N_0$, then this node must be the attractor. 
       Since the core $[c_2,c_1]$ contains the attractor and $x=a$ is chain-recurrent, the only possibility is that $f^2(c)=a$ and therefore that $f(c)=b$.
       In particular, $f$ is surjective.
%         \blue{Can we then conclude that $c_1=b$ since the core contains the attractor? Then, since there is no wandering intervals, there is a dense orbit, and thus $N_0=[a,b]$? I am asking since the proof is a bit confusing.}
%         Since $a$ is always chain-recurrent, then $a\in N_0$, and so also $b\in N_0$, since if $f(c)=c_1\neq b$ then $c_2>a$, \blue{and $N_0=\{a\}$ by part (1) of this theorem.}
         %\blue{Do you mean the node would be $\{a\}$ by (1)?} \red{Yes, indeed}
%         Suppose now that $N_0$ is the disjoint union of $k$ components $A_i$. 
%         Then each $A_i$ is a distinct attractor for $f^k$ and the basins of the $A_i$ must be separated by some chain-recurrent points. 
%         These points would belong to some node, but by hypothesis there is no other node than $N_0$. 
%         Hence $k=1$, namely $N_0=[a,b]$.
         
      If $p>0$, then it must be that $c_2>a$ or, by the argument above, there would be a single node. 
      Hence, there is no chain-recurrent point close enough to $a$ and so $\{a\}$ is a node by itself.
      
      {\bf 3.} If $p=1$, then there are no other nodes besides the boundary fixed point $a$ and the attractor $N_1$. 
      By the same argument above, the attractor must be a connected interval.
      Since $c\in N_1$, then $c_1,c_2\in N_1$ and so $N_1=[c_2,c_1]$. 
         %\blue{Was this the issue where we said we need Thm C?} \red{I forgot! :(} \blue{I don't see any issues here anymore, what I am missing?}
     %\end{enumerate}
 \end{proof}
% %
\begin{example}\label{ex:tent1}
  In case of the tent map family, $0$ is repelling for $s\in(1,2]$. Then we have $p=0$ 
  %either $s\in(0,1)$, in which case $N_0=\{0\}$, or when 
  for $s=2$,  $p>0$ for $s\in[1,2)$ and $p=1$ for $s\in[\sqrt{2},2)$.
\end{example}
\begin{corollary}
  Let $f$ be a surjective T-unimodal map.
  Then $f$ has the single node $N_0=[a,b]$. 
  %the attractor of $f$ is the whole domain $[a,b]$ of $f$. 
%  In particular, $f$ has no repelling nodes.
  %and its graph consists into a single node.  
\end{corollary}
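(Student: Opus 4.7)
The plan is to deduce this corollary directly from Proposition~\ref{prop:surjective}, with only one additional observation.

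First, I would extract the consequences of surjectivity. Since $f$ is unimodal with $c$ a maximum, surjectivity forces $f(c)=b$, and the unimodal convention then gives $f(a)=f(b)=a$. In particular, $a$ is the boundary fixed point, $c_1=b$, and $c_2=a$. Because $f$ is T-unimodal it has no attracting cycles, so the fixed point $a$ cannot be attracting and is therefore repelling in the sense required by Prop.~\ref{prop:surjective}.

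Next, I would apply part (2) of that proposition. With $c_2=a$ in hand, its dichotomy reads: either $p=0$ and $N_0=[a,b]$, which is exactly the desired conclusion, or $p>0$ and $N_0=\{a\}$. The whole argument thus reduces to ruling out the second alternative.

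The key step is to show that $a$ must already lie in the attractor. By Cor.~\ref{cor:Tunim} the attractor $N_p$ is chaotic, hence by Thm.~A it is a trapping region in the sense of Def.~\ref{def:trap}; in particular $c\in J_1(N_p)\subset N_p$. Since every node is closed and forward invariant, the forward orbit of $c$ stays inside $N_p$, so $c_2\in N_p$, i.e.\ $a\in N_p$. This is incompatible with $N_0=\{a\}$ being a node distinct from $N_p$ (nodes are pairwise disjoint), so the second alternative is excluded and we conclude $p=0$ and $N_0=[a,b]$.

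I do not foresee any real obstacle: the argument is essentially the remark that surjectivity forces $a=c_2$ into the chaotic attractor, leaving no room for a separate boundary node. The one ingredient worth highlighting is that $c$ lies in the attractor in the chaotic case, which is built into Def.~\ref{def:trap} together with Thm.~A.
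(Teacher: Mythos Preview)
Your proof is correct and follows the paper's intended route: the corollary is meant to be read off Prop.~\ref{prop:surjective}, whose part~(2) already asserts that $p>0$ forces $c_2>a$, so surjectivity ($c_2=a$) immediately yields $p=0$ and $N_0=[a,b]$. Your extra step---showing $a=c_2\in N_p$ via $c\in A$ and forward invariance, then invoking disjointness of nodes---is exactly what makes the paper's terse phrase ``by the argument above, there would be a single node'' precise.
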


\begin{lemma}
    \label{lemma:notsure}
  Let $N$ be a repelling node of a T-unimodal map $f$ and set $k=|\cT(N)|$. Then the restriction of $f^k$ to $J_1(N)$ is not surjective.
%  Then:
%  \begin{enumerate}
%      \item the restriction of $f^k$ to $J_1(N)$ is not surjective;
%      \item no chain-recurrent point lying in $\Jall(N)$ is upstream to $N$.
%  \end{enumerate}
\end{lemma}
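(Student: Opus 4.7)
The plan is to argue by contradiction: suppose $g := f^k|_{J_1(N)}$ is surjective onto $J_1(N)$. By Proposition~\ref{prop:unimodal}, $g$ is itself a T-unimodal self-map of the closed interval $J_1(N)$ (the endpoints $p_1(N)$ and $\hat p_1(N)$ both map to $p_1(N)$ under $g$, so they play the role of the boundary fixed point). The Corollary following Proposition~\ref{prop:surjective} then forces $g$ to have a single node, equal to the whole interval $J_1(N)$.

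Next I would exhibit two chain-recurrent points of $f$ that both lie in $J_1(N)$ but belong to distinct nodes of $f$. The first is the periodic endpoint $p_1(N) \in N$ furnished by Lemma~\ref{lemma:downstream}. The second is the critical point $c$: on the one hand $c$ lies in the interior of $J_1(N)$ since $f(c) = c_1$ strictly exceeds the maximum value of $f$ on $N$; on the other hand Theorem~A combined with Corollary~\ref{cor:Tunim} yields that the unique attractor $N_p$ of a T-unimodal map is a chaotic (type-2) attractor, hence a trapping region containing $c$. Since $N$ is repelling, $N \neq N_p$, so $p_1(N)$ and $c$ belong to genuinely different nodes of $f$.

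The remaining ingredient is that the equivalence $\sim_g$ on $J_1(N)$ refines the restriction of $\sim_f$ to $J_1(N)$: any $g$-$\varepsilon$-chain $x_0 = x, x_1, \dots, x_n = y$ inside $J_1(N)$ can be lifted to an $f$-$\varepsilon$-chain in $[a,b]$ by inserting the intermediate iterates $f(x_i), f^2(x_i), \dots, f^{k-1}(x_i)$ between consecutive $x_i$ and $x_{i+1}$; each inserted step is exact, while the terminal gap $d(f(f^{k-1}(x_i)), x_{i+1}) = d(g(x_i), x_{i+1})$ is less than $\varepsilon$ by hypothesis. A symmetric lift applies to reverse chains. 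Hence $x \sim_g y$ implies $x \sim_f y$. Applying this observation to $p_1(N)$ and $c$, which must be $\sim_g$-equivalent because $g$ has only one node, yields $p_1(N) \sim_f c$, contradicting the fact that they lie in different $f$-nodes.

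The only step that requires mild care is the chain-lifting comparison $\sim_g \Rightarrow \sim_f$; it is routine bookkeeping but should be stated explicitly. Everything else is a direct application of Proposition~\ref{prop:unimodal}, the surjectivity corollary, and Corollary~\ref{cor:Tunim}. The conceptual content of the argument is that surjectivity of $g$ would collapse all chain-recurrent points in $J_1(N)$ into a single $f$-node, erasing the interior repelling node structure that the existence of $N$ and $N_p \cap J_1(N)$ guarantees.
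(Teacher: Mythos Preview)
Your argument is correct and follows the same route as the paper: assume surjectivity, invoke Proposition~\ref{prop:unimodal} and the corollary to Proposition~\ref{prop:surjective} to collapse $J_1(N)$ into a single node, and then derive a contradiction from the coexistence of the repelling node $N$ and the attracting node $N_p$ inside $J_1(N)$. The paper phrases the contradiction as ``$\cT(N)$ would itself be a chaotic attractor, giving an intersection between an attracting and a repelling node,'' whereas you make the mechanism explicit by naming the witnesses $p_1(N)\in N$ and $c\in N_p$ and spelling out the chain-lifting step $\sim_g\Rightarrow\sim_f$; this is a welcome clarification of a point the paper leaves implicit.
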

\begin{proof}
    If the restriction of $f^k$ to $J_1(N)$ were surjective then, by Prop.~\ref{prop:surjective}, the trapping region $\cT(N)$ itself would be a chaotic attractor. In this case, though, there would be an intersection between an attracting node and a repelling node, which is impossible.
    % 2. Since the restriction of $f^k$ to $J_1(N)$ is not surjective, for $\varepsilon$ small enough, each $\varepsilon$-chain starting at a point in the interior of $J_1(N)$ is confined to some open subset of $J_1(N)$ and therefore cannot be upstream to any chain-recurrent point outside of the interior of $J_1(N)$.
\end{proof}
\allblack
\begin{proposition}
    \label{prop:characterization}
    Let $M,N$ be two distinct repelling nodes of $f$ with $M>N$.
    Then:
    \begin{enumerate}
        \item  $|\cT(N)|>|\cT(M)|$ and $|\cT(M)|$ divides $|\cT(N)|$;
        \item $N\subset\Jall(M)\setminus\Jall(N)$;
        \item if $M$ and $N$ are consecutive, $N$ is the set of all chain-recurrent points in $\Jall(M)\setminus\Jall(N)$.
    \end{enumerate}
\end{proposition}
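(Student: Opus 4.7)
The plan is to combine three tools established earlier in the paper: the renormalization principle (Proposition~\ref{prop:unimodal}), the localization of chain-recurrent points near the boundary (Proposition~\ref{prop:surjective}), and the observation in the proof of Proposition~\ref{prop:repellTrap} that no point upstream from $N$ lies in the interior of any $J_i(N)$. For part (1), set $k_M := |\cT(M)|$ and $k_N := |\cT(N)|$. Divisibility $k_M \mid k_N$ is a bookkeeping argument: since $J_1(N) \subset J_1(M)$, we have $f^i(J_1(N)) \subset f^i(J_1(M)) = J_{((i-1)\bmod k_M)+1}(M)$, so the orbit of $J_1(N)$ returns to $J_1(M)$ only at multiples of $k_M$, while it returns to $J_1(N) \subset J_1(M)$ at time $k_N$. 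For the strict inequality, I would argue by contradiction assuming $k_N = k_M$: then $J_1(N)$ is invariant under the renormalization $\tilde f := f^{k_M}|_{J_1(M)}$ (T-unimodal by Proposition~\ref{prop:unimodal}), and $p_1(N)$ is a $\tilde f$-fixed point on the boundary of $J_1(N)$. Since $J_1(N)$ is $\tilde f$-invariant and contains $c$, it contains the core $[\tilde c_2, \tilde c_1]$ of $\tilde f$, so $p_1(N)$ lies outside the open core; then Proposition~\ref{prop:surjective}(1) applied to $\tilde f$ forces $p_1(N) \in \{\tilde a, \hat{\tilde a}, \tilde c_1, \tilde c_2\}$. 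The first two options yield $N = M$, while the last two place $p_1(N)$ in the $\tilde f$-attractor $\tilde A \subset A$, contradicting $N \cap A = \emptyset$.

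For part (2), the relation $N \cap \Jall(N) = \emptyset$ is immediate: every $x \in N$ is chain-equivalent to $p_1(N)$ and so upstream from $N$, hence by the remark above $x \notin \inter{J_i(N)}$ for any $i$. For the inclusion $N \subset \Jall(M)$ I would show separately that $N \subset \cT(M)$ and that $N \cap \partial \cT(M) = \emptyset$. The first holds because $p_1(N) \in J_1(N) \subsetneq J_1(M)$ (strict by part (1)) and its $f$-iterates distribute among the $J_j(M)$ by the cyclic structure, while the rest of $N$ stays in the $f^{k_M}$-invariant $\cT(M)$ via chain-equivalence to $p_1(N)$. The second holds because $\partial J_j(M)$ is contained in $M \cup A \cup \{\hat p_1(M)\}$: iterates of $p_1(M)$ sit in $M$, iterates of $c$ sit in $A$ by Proposition~\ref{prop:core}, and $\hat p_1(M)$ is readily seen to fail chain-recurrence; all three sets are disjoint from $N$.

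For part (3), let $x \in \Jall(M) \setminus \Jall(N)$ be chain-recurrent and let $N'$ be the node containing $x$. I would rule out each alternative to $N' = N$. If $N' = M$, then $x \in M \subset \partial \cT(M)$ contradicts $x \in \Jall(M)$. The case $M > N' > N$ is excluded by the consecutivity hypothesis. If $N'$ sits strictly below $N$ in the order (including the case $N' = A$), then iterating part (2) together with the monotone nesting $\Jall(N'') \subset \Jall(N''')$ whenever $J_1(N'') \subsetneq J_1(N''')$ gives $N' \subset \Jall(N)$, contradicting $x \notin \Jall(N)$. Finally, if $N' > M$, the same nesting yields $\Jall(M) \subset \Jall(N')$ while part (2) applied to $N'$ gives $N' \cap \Jall(N') = \emptyset$, so $x \in \Jall(M) \subset \Jall(N')$ contradicts $x \in N'$.

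The main difficulty I anticipate lies in the strict inequality of part (1): one must verify cleanly that the core endpoints $\tilde c_1, \tilde c_2$ belong to the $\tilde f$-attractor $\tilde A$ (using that $c \in \tilde A$ and $\tilde A$ is $\tilde f$-invariant) and that $\tilde A$ in turn sits inside the $f$-attractor $A$, so that the disjointness $N \cap A = \emptyset$ can be invoked to produce the contradiction. A secondary technical point is the containment $N \subset \cT(M)$ in part (2) when $N$ is not simply a periodic orbit, as can happen for T-unimodal logistic maps within a window; there one must run a chain-recurrence argument ensuring that points chain-equivalent to $p_1(N) \in \cT(M)$ cannot escape the forward-invariant trapping region.
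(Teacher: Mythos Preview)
Your proposal is correct. Parts~(2) and~(3) follow the paper's line almost exactly: for~(2) the paper simply invokes forward-invariance of $\Jall(M)$ and of $N$ to conclude $N\subset\Jall(M)$, which is in fact a bit brisk, and your decomposition into $N\subset\cT(M)$ plus $N\cap\partial\cT(M)=\emptyset$ is the cleaner way to say it (your worry about the Cantor case is legitimate and is not addressed more fully in the paper either). For~(3) both you and the paper argue that a hypothetical chain-recurrent $x$ in the gap must sit in a repelling node $N'$ strictly between $M$ and $N$, contradicting consecutivity; the only difference is that the paper first pins down $M>N'$ and then argues $N'>N$, while you run through all order cases symmetrically.

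The genuine divergence is in the strict inequality of part~(1). The paper does not renormalize: assuming $k_M=k_N$ it observes that both $p_1(M)$ and $p_1(N)$ are fixed by $f^{k_M}$ and splits on the side of $c$ where $p_1(N)$ lies. If $p_1(N)$ is on the same side as $p_1(M)$, monotonicity of $f^{k_M}$ on $[p_1(M),p_1(N)]$ together with repellingness of both endpoints forces an attracting fixed point in between, violating Assumption~(T); if $p_1(N)$ is on the opposite side, $f^{k_M}$ is orientation-reversing there and $J_1(N)$ cannot be $f^{k_M}$-invariant. Your route instead passes through Propositions~\ref{prop:unimodal} and~\ref{prop:surjective}: it is equally valid but less elementary, and one small point to tighten is that Proposition~\ref{prop:surjective}(1) only excludes $(\tilde a,\tilde c_2)$, so to rule out $p_1(N)\in(\tilde c_1,\hat{\tilde a})$ you should add the one-line observation that $\tilde f$ has range contained in $[\tilde a,\tilde c_1]$ and hence no fixed point above $\tilde c_1$. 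The paper's argument is shorter and self-contained; yours has the advantage of reusing the renormalization machinery already in place.
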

\allteal
\begin{proof}
    {\bf 1.} Set $\cT(M)=\{\J_1,\dots,\J_k\}$ and
    $\cT(N)=\{\J'_1,\dots,\J'_{k'}\}$.
    Since the maps $f:\J_i\to \J_{i+1}$,
    $i=2,\dots,k$ are all homeomorphisms, where we set $\J_{k+1}=\J_1$, then inside each $\J_i$ there must be the same number of intervals $\J'_i$, namely $k'=mk$ for some integer $m\geq1$.
    
    Suppose now that $k'=k$ and assume, for discussion sake, that $p_1(M)<c$.
    Then, since $f^k(\J'_1)\subset \J'_1$, both $p_1(M)$ and $p_1(N)$ are fixed points for $f^k$. 
    
    If $p_1(N)<c$ as well, then $N$ would be an attracting node. Indeed, if it were not, since $f^k$ is increasing between $p_1(M)$ and $p_1(N)$, and those points are both repelling by assumption, there would be an attracting point $x$, belonging to an attracting node $N'$, between $p_1(M)$ and $p_1(N)$. 
    Hence $N$ would be inside $\Jall(N')$ but, since $N'$ is an attractor, $\Jall(N')$ is a subset of the basin of $N'$.
    %, against the hypothesis that $N_1$ and $N_2$ are consecutive. 
    
    If, on the contrary, $p_1(N)>c$, then 
%    \blue{It could be non-differentiable!}
%    \red{Oops, thanks! }
    \magenta{$f^k$ would be decreasing nearby $p_1(N)$ and so  $f^k(J'_1)$ would be on the other side of $J'_1$ with respect to $p_1(N)$, so that $\J'_1$ could not be invariant under $f^k$.} 
%    \blue{Should all $J_1$ in this paragraph be $J_1'$?}
    
%    Hence, $p_1(N_2)$ is an attractor for $\ell^k_\mu$, against the hypotheses.
%    We claim that the case $m=1$ arises only when $N_2$ is an attractor. Indeed, if $m=1$, then $p_1(N_2)$ is a fixed point for $\ell^k_\mu$ and $\ell^k_\mu(c)<c$.
%    There are two cases:
%    CASE 1: $N_1,N_2$ are periodic orbits. In this case, we are in the middle of a bifurcation cascade and $|\cT(N_2)|=2|\cT(N_1)|$ (see Fig.~\ref{fig:p24ic}).

    {\bf 2.} Since $M>N$, there is at least one point of $N$ inside $\Jall(M)$ and therefore, since both $\Jall(M)$ and $N$ are forward-invariant, the whole $N$ must be contained in $\Jall(M)$. 
    Moreover, by construction, no point of $N$ can fall onto $\Jall(N)$.
    
    {\bf 3.} Suppose that there is a chain-recurrent point $x\in\Jall(M)\setminus\Jall(N)$ not belonging to $N$. Denote by $N'$ the node $x$ belongs to.
    Without loss of generality we can assume that $x\in \J_1(M)$. Notice that this means that $M>N'$.
    By Prop.~\ref{prop:upstream}, $N'$ cannot have points in $\Jall(N)$ since no chain-recurrent point of that set can be upstream to any chain-recurrent point outside of it. Hence, $N'>N$, which contradicts the hypothesis that $M,N$ are consecutive.
    %Then the trajectory under $f^k$ remains in $\J_1(M)$ and never enters $\J_1(N)$ by recurrence
%    Hence $J$. 
    %Then this point would belong to another node $N_3$ and, by the same argument above, $N_3$ would be a subset of $\Jall(N_1)\setminus\Jall(N_2)$
\end{proof}
\begin{definition}
    \label{def:Ncore}
    For $k = 0,\dots,p-1$, we denote by $r_k$ the period of $\cT (N_k)$ and set
    $$\bar f_{i} = f^{r_k}|_{J_i(N_k)} : J_i(N_k) \to J_i(N_k), i = 1,\dots,r_k.$$
    We denote by $K(\bar f_{i})$ the core of the T-unimodal map $\bar f_{i}$ and call {\bf core of $N_k$} the collection of intervals
    $$\cK(N_k) = \{K(\bar f_{1}),\dots,K(\bar f_{r_k})\}.$$
    Finally, we denote by $K(N_k)$ the union of all intervals in $\cK(N_k)$.
\end{definition}
\allmagenta
\begin{example}\label{ex:tent2}
    When the endpoint $a$ is repelling and $c_2>a$, $N_0=\{a\}$ and $\cK(N_0)=\{[c_2,c_1]\}$. This is the case of the tent map $T_s$ for $s\in(1,2)$, as it is clearly illustrated in Fig.~\ref{fig:tm}. 
    When the inner fixed point $\bp$ is repelling and a node in itself, then $N_1=\{\bp\}$ and $\cK(N_1)=\{[c_2,c_4],[c_3,c_1]\}$. 
    In case of the tent map, this holds for $s\in(1,\sqrt{2})$, as it is evident from Fig.~\ref{fig:tm}.
\end{example}
\begin{example}
%  $\cK(N_0)=\{[c_2,c_1]\}$ since $r_0=1$ is the period of $\cT(N_0)=\{[a,b]\}$.
%  If the second fixed point $\bp$ of $f$ is a repelling node (as in Fig.~\ref{fig:T1}), then $N_1=\{\bp\}$, $r_1=2$ and $\cK(N_1)=\{[c_2,c_4],[c_3,c_1]\}$.
  Consider a parameter $\mu$ belonging to the period-3 window of the bifurcation diagram in Fig.~\ref{fig:Tunimodal2}. Then $N_0=\{a\}$ and the second node $N_1$ is a Cantor set.
  %, as in case of the 
%  Consider for instance the concrete
%  case of the logistic map with $\mu=\mu_{50}$ in Fig.~\ref{fig:T2}. 
  The corresponding cyclic trapping region $\cT(N_1)$ has period equal to 3 and $\cK(N_1)=\{[c_2,c_5],[c_3,c_6],[c_4,c_1]\}$. 
%  \blue{We are assuming that we are in the period-3 window, not just that the second node is a Cantor set, right? Otherwise $\cT(N_1)$ can have some other period than 3.}
\end{example}

\begin{proposition}\label{prop:cores}
    The following properties hold for each $k=0,1,\dots,p-1$:
    \begin{enumerate}
    \item $K(\bar f_{r_k-i}) = [c_{2r_k-i-1},c_{r_k-i-1}]$ for $i=-1,0,\dots,r_k-2$, where $\bar f_{r_k+1}=\bar f_1$; 
    %\blue{This is for $0\leq i<r_k-1$? For $i=r_k-1$ it would give $K(\bar f_1)=[c_{r_k},c]$.}
    %\red{Thanks! If you see a better way to write it, I'm all for it!} \blue{I think this is okay. The only other way I see would be separating the $\bar f_1$ case from $\bar f_{r_k-i}$ for $0\leq i<r_k-1$.}
    \item $K(N_k)\subset\Jall(N_k)$;
    \item for $k\leq p-2$, $\cK(N_k)$ is a trapping region and $\Jall(N_{k+1})\subset K(N_k)$.
  \end{enumerate}
\end{proposition}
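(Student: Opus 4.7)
The plan is to apply three T-unimodal tools — Proposition~\ref{prop:core} (the core is a tight period-1 trapping region), Proposition~\ref{prop:surjective} (no chain-recurrence in the tail), and Lemma~\ref{lemma:notsure} (non-surjectivity) — to each $\bar f_j$, not just $\bar f_1$. My preliminary observation is that each $\bar f_j$ is T-unimodal: this follows either by repeating the argument of Proposition~\ref{prop:unimodal} with $J_1$ replaced by $J_j$, or via the topological conjugacy $\bar f_1=h\circ\bar f_j\circ h^{-1}$ where $h=f^{r_k-j+1}|_{J_j(N_k)}:J_j(N_k)\to J_1(N_k)$ is a homeomorphism (because $f:J_l(N_k)\to J_{l+1}(N_k)$ is a homeomorphism for every $l\ne 1$).

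\textbf{Part 1.} Since the critical point of $\bar f_1$ is $c$, Proposition~\ref{prop:core} immediately gives $K(\bar f_1)=[c_{2r_k},c_{r_k}]$. For $j\ge 2$, the critical point $\tilde c_j$ of $\bar f_j$ is the unique point of $J_j(N_k)$ whose $f$-orbit first reaches $c$ after $r_k-j+1$ steps, and it exists uniquely by the homeomorphism property above. Therefore $\bar f_j(\tilde c_j)=f^{j-1}(c)=c_{j-1}$ and $\bar f_j^2(\tilde c_j)=f^{r_k}(c_{j-1})=c_{r_k+j-1}$, so Proposition~\ref{prop:core} applied to $\bar f_j$ yields $K(\bar f_j)=[c_{r_k+j-1},c_{j-1}]$. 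The change of variable $j=r_k-i$ recovers the stated formula.

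\textbf{Part 2.} It suffices to show that both endpoints of $K(\bar f_j)$ lie in $\inter{J_j(N_k)}$. Lemma~\ref{lemma:notsure} transfers to each $\bar f_j$ via the conjugacy above, so the maximum value of $\bar f_j$ is strictly less than the non-periodic endpoint $q_j$ of $J_j(N_k)$; it cannot equal the periodic endpoint $p_j$ either, as that would force $\bar f_j$ to be constant between a preimage of $p_j$ and the critical point. The other endpoint $c_{r_k+j-1}=\bar f_j(c_{j-1})$ is just the image of the maximum and reaches the boundary of $J_j(N_k)$ only if the maximum already did.

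\textbf{Part 3.} For the trapping region claim, $c\in K(\bar f_1)$ is built into the definition of the core. For $j\ge 2$, $K(\bar f_j)\subset J_j(N_k)$ avoids $c$, so $f|_{K(\bar f_j)}$ is monotone and $f(K(\bar f_j))=[c_{r_k+j},c_j]=K(\bar f_{j+1})$ cyclically (with $K(\bar f_{r_k+1})=K(\bar f_1)$). The subtle case is $f(K(\bar f_1))$: the maximum $c_1$ is attained at $c$, and the other candidate endpoint is $\min(c_{2r_k+1},c_{r_k+1})$; the inequality $c_{2r_k+1}\ge c_{r_k+1}$ comes from $c_{r_k+1}\in K(\bar f_2)$ together with the tight trapping property $\bar f_2(K(\bar f_2))=K(\bar f_2)$ (Proposition~\ref{prop:core} applied to $\bar f_2$), giving $\bar f_2(c_{r_k+1})=c_{2r_k+1}\in K(\bar f_2)$. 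For the inclusion $\Jall(N_{k+1})\subset K(N_k)$, Proposition~\ref{prop:surjective} applied to each $\bar f_j$ shows that every chain-recurrent point of $\bar f_j$ in $J_j(N_k)$ outside $K(\bar f_j)$ is the fixed endpoint $p_j\in N_k$; since $N_{k+1}$ is disjoint from $N_k$ and $f$-chain-recurrence restricts to $\bar f_j$-chain-recurrence (an $f$-chain of length $nr_k$ yields a $\bar f_j$-chain of length $n$ after a uniform Lipschitz estimate), we obtain $N_{k+1}\cap J_j(N_k)\subset K(\bar f_j)$ for each $j$. In particular $p_1(N_{k+1})\in K(\bar f_1)$, and $n_{k+1}=f(p_1(N_{k+1}))\in N_{k+1}\cap J_2(N_k)\subset K(\bar f_2)=[c_{r_k+1},c_1]$ forces $n_{k+1}\ge c_{r_k+1}=f(c_{r_k})$; monotonicity of $f$ to the right of $c$ then yields $\hat p_1(N_{k+1})\le c_{r_k}$, so $J_1(N_{k+1})\subset K(\bar f_1)$, and iterating $f$ together with the trapping property just established propagates the inclusion to every $J_i(N_{k+1})$. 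The main obstacle I foresee is the interdependence in Part 3 between identifying $K(\bar f_j)$ as an interval from Part 1 and establishing the cyclic trapping property — both must be threaded through consistently, and in particular the inequality $c_{2r_k+1}\ge c_{r_k+1}$ needs the tight-trapping property of $K(\bar f_2)$ as an input rather than as an output of the cyclic argument.
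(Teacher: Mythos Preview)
Your proof is correct and uses the same toolkit as the paper (Propositions~\ref{prop:core}, \ref{prop:unimodal}, \ref{prop:surjective} and Lemma~\ref{lemma:notsure}), with one small technical slip: T-unimodal maps are only $C^0$, so ``uniform Lipschitz estimate'' should be replaced by uniform continuity of $f$ on the compact interval. In fact you can sidestep the chain-recurrence transfer entirely here, since $p_1(N_{k+1})$ is $f$-periodic with period a multiple of $r_k$ (its orbit cycles through the $J_i(N_k)$), hence $\bar f_1$-periodic and in particular $\bar f_1$-chain-recurrent; this already places $p_1(N_{k+1})$ in $K(\bar f_1)$ by Proposition~\ref{prop:surjective}(1), and your monotonicity argument for $\hat p_1(N_{k+1})$ then finishes the inclusion.

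In Part~3 your execution is actually more detailed than the paper's. For the trapping property $f(K(\bar f_1))\subset K(\bar f_2)$ the paper exploits its backward identification $K(\bar f_j)=f^{-1}(K(\bar f_{j+1}))\cap J_j(N_k)$ from Part~1 to write $f(K(\bar f_1))=f^{r_k}(K(\bar f_2))=\bar f_2(K(\bar f_2))\subset K(\bar f_2)$ in one line, whereas you compute $f(K(\bar f_1))$ directly and then justify $c_{2r_k+1}\ge c_{r_k+1}$ via the tight trapping of $K(\bar f_2)$. For the inclusion $\Jall(N_{k+1})\subset K(N_k)$ the paper simply invokes uniqueness of the attractor and leaves the passage from ``$A\subset K(N_k)$'' to ``$\Jall(N_{k+1})\subset K(N_k)$'' implicit; your explicit route through Proposition~\ref{prop:surjective} plus the monotonicity bound on $\hat p_1(N_{k+1})$ fills in that gap.
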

\begin{proof}\
  Recall that $J_1(N_k)=[p_1(N_k),\hat p_1(N_k)]$ is the interval of $\cT(N_k)$ containing $c$, so that
  $$K(\bar f_1) = [\bar f_1^2(c),\bar f_1(c)] = [c_{2r_k},c_{r_k}].$$
  Moreover, each map $f|_{J_i(N_k)}:J_i(N_k)\to J_{i+1}(N_k)$, $i=2,\dots,r_k$, where we set $J_{r_k+1}(N_k)=J_1(N_k)$,
  is a homeomorphism for $i>1$, and so $K(\bar f_{r_k}) = f^{-1}(K(\bar f_1))\cap J_{r_k}(N_k)=[c_{2r_k-1},c_{r_k-1}]$ and, ultimately,
  $$
  K(\bar f_{r_k-i}) = [c_{2r_k-i-1},c_{r_k-i-1}].
     $$
  The assumption that $N_k$ is not the last node implies the following two facts:
\begin{enumerate}
  \item
  $K(\bar f_{1})\subset int(J_1(N_k))$. Otherwise, $\cT(N_k)$ would be a tight trapping region and therefore an attractor and $N_k$ would be the last node, against the fact that there are $p+1$ nodes. Hence, in general,
  $K(\bar f_{i})\subset int(J_{i}(N_k))$ for $i=1,\dots,r_k$, which proves point (2).
  \item
  Since the cycle $\Gamma(N_k)$ is repelling and, for $k<p-1$,  $N_{k+1}$ is
  repelling too, then by Proposition~\ref{prop:core} we have that $K(\bar f_i)$
  is forward-invariant with respect to $\bar f_i$, so that
  $$
  f(K(\bar f_1)) = f^2(K(\bar f_{r_k})) = \dots = f^{r_k}(K(\bar f_2)) \subset  K(\bar f_2)
  $$                                      and, more generally, $f(K(\bar f_i))\subset f(K(\bar f_{i+1}))$, $i=1,\dots,r_k$, where we put $\bar f_{r_k+1}=\bar f_1$. 
\end{enumerate}                                                                
  Since the attractor is unique, it must lie inside $K(N_k)$ and so $\Jall(N_{k+1})$ must be completely  
  contained inside $K(N_k)$ as well. In particular, $c\in J_1(N_{k+1})\subset K(\bar f_1)$, so $\cK(N_k)$ is a trapping region, proving point (3).
\end{proof}

\allblack

%\medskip\noindent
%{\bf Classification of nodes.} 
\section{Classification of nodes and main results}
With the results above, we can classify all types of nodes of T-unimodal maps.
\begin{definition}
    Let $N$ be a repelling node $f$. 
    We say that $\cT(N)$ is {\bf regular} (resp. {\bf flip}) if, in a small enough neighborhood of any $p\in\gamma(\cT(N))$, $f$ is increasing (resp. decreasing).
\end{definition}
\begin{proposition}
  A repelling node for a T-unimodal map $f$ is either a repelling periodic orbit or a Cantor set on which $f$ acts transitively. 
  In particular, each point of a repelling node is also a non-wandering point.
\end{proposition}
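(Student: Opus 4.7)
The plan is a dichotomy on whether $N$ equals the periodic orbit $\Gamma=\gamma(\cT(N))$ or strictly contains it. The inclusion $\Gamma\subset N$ is immediate: $p_1(N)\in N$ by Proposition~\ref{prop:repellTrap}, and $N$ is forward $f$-invariant. If $N=\Gamma$, the first alternative holds and non-wandering is trivial. From here on I focus on the case $N\supsetneq\Gamma$, where I must show $N$ is a Cantor set on which $f$ acts transitively.

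Compactness of $N$ is automatic. For total disconnectedness I would argue by contradiction: any nondegenerate interval $I\subset N$ satisfies $I\cap\Jall(N)=\emptyset$ by Proposition~\ref{prop:characterization}(2), but the Corollary following Proposition~\ref{prop:homtervals} rules out homtervals for $f$, so some iterate $f^n(I)$ contains $c$ in its interior; then $c\in f^n(I)\subset N$ by invariance, while $c\in\Jall(N)$, contradicting $N\cap\Jall(N)=\emptyset$.

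For transitivity and perfectness, I would study the renormalization $g=f^r|_{J_1(N)}$ with $r=|\cT(N)|$, which is T-unimodal by Proposition~\ref{prop:unimodal} and non-surjective by Lemma~\ref{lemma:notsure}. Applying Theorem~A (Jonker--Rand) recursively through nested renormalizations, and excluding the adding-machine alternative at each level by the hypothesis that $f$ has finitely many nodes, I would identify the $f$-action on $N$ with the attractor dynamics of an appropriate T-unimodal renormalization, which supports a dense orbit; this yields topological transitivity of $f|_N$. Perfectness then follows: $N$ must be infinite, since a finite chain-transitive invariant set in a T-unimodal map reduces to a single periodic orbit, and a topologically transitive action on an infinite compact set admits no isolated points. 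Non-wandering of each point of $N$ follows either trivially (periodic case) or from topological transitivity (Cantor case).

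The main obstacle is the transitivity step. Chain-transitivity of $N$ does not formally imply topological transitivity, so the identification of $f|_N$ with attractor dynamics of a renormalization must be carried out carefully, using Proposition~\ref{prop:characterization}(3) to recognize the dynamics on the ``gaps'' $\Jall(M)\setminus\Jall(N)$ as compatible with such an identification. I would expect topological exactness (as invoked in Theorem~B for tent maps) to play the decisive role here, since it is what upgrades chain-transitivity to a genuine dense orbit.
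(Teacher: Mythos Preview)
Your total-disconnectedness argument via homtervals is sound, and the dichotomy $N=\Gamma$ versus $N\supsetneq\Gamma$ is the right top-level split. The difficulty is exactly where you flag it: transitivity. Your proposed mechanism---identifying $f|_N$ with the \emph{attractor} dynamics of the renormalization $g=f^r|_{J_1(N)}$ and invoking Jonker--Rand---does not work as stated. The node $N$ is a repellor, and inside $J_1(N)$ the only trace of $N$ visible to $g$ is the boundary fixed point $p_1(N)$; the attractor of $g$ lives in $\Jall(N)$, which is disjoint from $N$. So there is no renormalization in the nested tower whose attractor is $N$, and Theorem~A gives you nothing about the dynamics on $N$ itself. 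Your last paragraph hints at using topological exactness to ``upgrade'' chain-transitivity, but exactness is a property of the attractor, not of the repelling complement, so this does not transfer either.

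The paper's route is different and more concrete. One renormalizes at the \emph{previous} node: set $\bar f=f^{|\cT(N_{i-1})|}$ on $J_1(N_{i-1})$, and let $\cS$ be the cyclic trapping region of $N_i$ seen inside $J_1(N_{i-1})$. The repelling node $N_i$ is then realized as the set of points of $J_1(N_{i-1})$ that never fall into $\Jall(\cS)$ under $\bar f$. The argument splits on whether $\cS$ is \emph{regular} or \emph{flip}. In the flip case the preimages of $\cS$ exhaust everything except the periodic orbit through $p_1(N_i)$ and its preperiodic tail, so $N_i=\Gamma$. In the regular case one shows directly that the preimages of the $S_j$ form a countable dense family of open intervals with pairwise disjoint closures, so their complement is a Cantor set (this gives perfectness and total disconnectedness simultaneously), and $\bar f$ acts on it as a subshift of finite type, which yields transitivity. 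Note also that your perfectness step (``transitive on infinite compact $\Rightarrow$ no isolated points'') is false in general; in the paper perfectness comes for free from the explicit interval-deletion construction.
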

\begin{proof}\
    By Prop.~\ref{prop:characterization}, $N_{i+1}$ 
%    \blue{Should this be $N_{i+1}$?} \red{Yep!} 
    is the set of chain-recurrent points in $\Jall(N_i)\setminus\Jall(N_{i+1})$.
    Denote by $\bar f$ the power $|\cT(N_i)|$ of $f$ and let $\cS$ be the collection of all intervals of $\cT(N_{i+1})$ that lie in $J_1(N_i)$. 
    Notice that $\cS$ is a cyclic trapping region of period $k=|\cT(N_{i+1})|/|\cT(N_i)|$ for the restriction of $\bar f$ to $J_1(N_i)$ and that $S_1=J_1(\cS)=J_1(N_{i+1})$.
    We can always reduce the problem of the structure of the set $N_{i+1}$ to the problem of the structure of the set of points of the single interval $\J_1(N_i)$ not falling, under $\bar f$, on $\cS$. 
%    $\bar f$ i
%    =\{S_1,\dots,S_k\}$ 
%    \blue{In this reading I got completely lost here. What is $\cS$?} 
%    \red{Sorry! :) $\cS$ is the part of $\cT(N_{i+1})$ that lies in $J_1(N_i)$. As I point out below, one of those intervals is $J_1(N_{i+1})$.}
%    of $\J_1(N_i)$ of period $k=|\cT(N_{i+1})|/|\cT(N_i)|$. Note that $S_1=\J_1(N_{i+1})$.
    
    Assume first that
    $\cS$
%    $\cT(N_{i+1})=\{\J_1,\dots,\J_k\}$ 
    is regular. Then, in particular:
    \begin{enumerate}
        \item $S_i\cap S_j=\emptyset$ for $i\neq j$;
        \item for all $i\neq 1$, the set $\bar f^{-1}(S_{i+1})$, where we use the notation $S_{k+1}=S_1$, is the disjoint union of $S_i$ and a second interval $S'_i$, on each of which $\bar f$ restricts to a homeomorphism 
%        \blue{Or just homeomorphism?\red{definitely!}} 
        with $S_{i+1}$ 
%        \blue{$S_{i+1}$?};
        \item $\bar f^{-1}(S_{2})=S_1$. 
        In this case, every point of $f(S_1)\subset S_2$
        %\blue{$S_2$?} 
        is covered by two points of $S_1$, %\blue{$S_1$?} 
        except for $f(c)$. 
    \end{enumerate}
    Now, take any two intervals $A$ and $B$ that are connected components of, respectively, $\bar f^{-p}(S_i)$ and $\bar f^{-q}(S_j)$ and 
%    \allgreen
    assume, in order to avoid trivial cases, that neither $A$ nor $B$ are equal to $S_1$. 
    Then there is $r>0$ such that $\bar f^r(A)$ and $\bar f^r(B)$ are subsets, respectively, of two intervals $S_{i_A}$ and $S_{i_B}$ and at least either $\bar f^r(A)$ or $\bar f^r(B)$ is equal to that interval.
    
    Assume first that $i_A\neq i_B$. Then $\bar f^r(A)\cap\bar f^r(B)=\emptyset$ and therefore also $A\cap B=\emptyset$.
    Assume now that $i_A=i_B=i$;
    then $A\cap B\neq\emptyset\implies A=B$.
    If it is not so, then there are two possibilities. 
    The first is that $A\cap B$ is a single point.
    In this case, though, there would be a $s$ such that $S_i\cap\bar f^{sk}(S_i)$ is a single point, but this can happen only if $\cS$ is flip, against the hypothesis.
    The second possibility is that $A\cap B$ contains an open set. 
    In this case, either $A=B$ or there is an endpoint of one of the two intervals, say $A$ in the interior of the other one. 
    This last possibility, though, leads to a contradiction. 
    Indeed, let $s>0$ be such that $\bar f^s(A)=S_1$
    and recall that $\bar f^s|_{A}$ is a homeomorphism and so brings endpoints to endpoints. 
    Then, in the interior of $S_1$ there would be a point that eventually falls on $N_i$, but this is impossible because then such point would be at the same time upstream and downstream from $N_i$ and so would belong to it.    
    % This, though, is impossible
    % , then we should have that
    
    % Since $f$ is a homomorphism on $A$ and $B$By Lemma~\ref{lemma:downstream}, 
    % , it means that $A$ and $B$ are backward iterates of the same $\J_i$. 
    % In that case, there is a $s>0$ such that  $S_i\cap\bar f^{sk}(S_i)\neq\emptyset$ \blue{Should one $i$ be $j$ here?}.
    % The only non-trivial case is when this intersection is a single point. In this case, the common point must be the periodic endpoint, but this can happen only if $\cS$ is a flip cyclic trapping region, against the hypothesis.

    \allblack
    % $\ell^r_\mu(A)\cap\ell^r_\mu(B)$ will be a neighborhood of the periodic endpoint of $\J_{i}$, namely $A$ and $B$ are one inside the other.
    % Finally, notice that the counterimages of the $\J_i$ must be dense in $\Jall(N_i)$ because almost all points converge to the attractor. 
    
    Ultimately, therefore, the set of points of $J_1(N_i)$ that never fall in $\Jall(\cS)$ is the complement of a countable dense set of open intervals whose closures are pairwise disjoint.
    %such that the closure of any two distinct intervals have no common point. 
    Hence, it is a Cantor set and the action of $\bar f$ on it is a subshift of finite type. 
    The node $N_{i+1}$ is a closed invariant subset of a finite union of such Cantor sets, and therefore is itself a Cantor set.
    
    % Then, $A\cap B=\emptyset$ if $i-j\neq p-q \mod k$. Indeed, assume for discussion sake that $p\geq q$. 
    % Then
    % %, if $A\cap B\neq\emptyset$, also    $\ell_\mu^p(A)\cap\ell_\mu^p(B)\neq\emptyset$. By definition, though, 
    % $\ell_\mu^p(A)\subset \J_i$ and $\ell_\mu^p(B)\subset \ell_\mu^{p-q}(\J_j)\subset \J_{j'}$, where $j'=j+p-q \mod k$. 
    % Hence $\ell_\mu^p(A)\cap\ell_\mu^p(B)=\emptyset$, and therefore $A\cap B=\emptyset$, if $i\neq j+p-q \mod k$. 
    % Moreover, when $i=j+p-q \mod k$, then both $\ell_\mu^p(A)$ and $\ell_\mu^p(B)$ are contained in the same $\J_i$
    % % Moreover, when $i=j+p-q \mod k$, suppose that, for discussion sake, $i\geq j$. 
    % % Then, since $p=q+i-j+rk$ for some integer $r$, then
    % % $\ell^{-p}_\mu(\J_i)=\ell^{-p}_\mu(\ell_\mu^{i-j}\J_j)=\ell^{-p+i-j}_\mu(\J_j)=\ell^{-q-rk}_\mu(\J_j)=\ell^{-rk}_\mu(\ell_\mu^{-q} \J_j)$

    % If $\cT(N_{i+1})$ is regular, then $N_i$ is a closed subset of the complement of a dense set of open intervals with pairwise empty closure. Hence, $N_i$ is a Cantor set. 
    
    Assume now that $\cS$ is a flip cyclic trapping region. Then $\cS=\{S_1,S_2\}$, with $S_1=J_1(N_{i+1})$.
    Hence, $\cS$ is as in Fig.~\ref{fig:tr}, namely $S_1=[q_1,p_1]$ and $S_2=[p_1,q_2]$, where $p_1=p_1(N_{i+1})$ is fixed for $\bar f$.
    Then $\bar f^{-1}(S_1)$ is the disjoint union of $S_2$ and the interval $A=[\tilde q_1,q_1]$, where $\tilde q_1$ is the counterimage of $q_1$ at the left of $c$, while %$\bar\ell^{-1}(\J_2)=\J_1$. 
    $\bar{f}^{-1}(S_2)=S_1$. 
    The two counterimages of $A$ are the intervals $A_1=[\doubletilde{q}_1,\tilde q_1]$ and $A_2=[\bar q_1,\bar{\bar{q}}_1]$, where $\doubletilde{q}_1$ is the counterimage of $\tilde q_1$ at the left of $c$, $\bar{\bar{q}}_1$ the one at the right of $c$ and $\bar{q}_1=q_2$ is the counterimage of $q_1$ at the right of $c$. 
    Similarly, at every new recursion step, two new intervals arise, one at the left of $c$ and having an endpoint in common with the interval at the left of $c$ obtained at the previous recursion level and one at the right with similar properties. 
    
    Ultimately, then, the set of points of $J_1(N_{i})$ that do not fall eventually in $\Jall(N_{i+1})$ under $\bar f$ is the union of the fixed point $p_1(N_i)$ together with all of its counterimages under $\bar f$. These counterimages can be sorted into two subsequences which converge monotonically to the endpoints of $J_1(N_i)$. Hence, in this case $N_{i+1}$ consists exactly in the flip periodic orbit through $p_1(N_{i+1})$.
\end{proof}
%
%The classification of attracting nodes is based on Thm~A.
%
\begin{proposition}
  \label{prop:A}
  Let $f$ be a T-unimodal map with attractor $A$ and attracting node $N_p$.
  Then $N_p$ can be of the following two types:
  \begin{enumerate}
      \item if $A$ does not intersects any repelling Cantor set, then $N_p=A$ (case $A_2$ in~\cite{DLY20});
      \item if $A$ does intersect a repelling Cantor set $C$ (notice that, in this case, $A\cap C$ must be a periodic orbit at the boundary of $A$), then $N_p$ is a trapping region containing $A$, $C$ and part of the basin of attraction of $A$ (case $A_5$ in~\cite{DLY20}).
  \end{enumerate}
  In particular, in the first case, $\Rf=\Omega_f$ while, in the second case, $\Rf\supsetneq\Omega_f$.
\end{proposition}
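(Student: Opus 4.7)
The plan is to exploit the classification of $A$ from Corollary~\ref{cor:Tunim} together with the structural description of repelling nodes established in the preceding proposition, and to split the argument on whether $A$ intersects a repelling Cantor set. To set up, I would first observe that since $A$ is a chaotic attractor, it is a tight cyclic trapping region of some period $m$ on whose components $f^m$ is topologically exact; in particular $f|_A$ is transitive, so every point of $A$ is non-wandering and all of $A$ lies in a single chain-recurrence equivalence class, which must be $N_p$. The question is therefore what else, if anything, $N_p$ contains.

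For Case 1, I would argue by contradiction that no point $x$ outside $A$ can be chain-equivalent to points of $A$. Such an $x$ would force the existence of chain-recurrent structure lying on $\partial A$ to bridge back from $A$ to $x$ via short $\epsilon$-jumps, and by the classification of repelling nodes the only available bridges are either a repelling periodic orbit sitting on $\partial A$ or a repelling Cantor set sharing a periodic orbit with $\partial A$. The hypothesis of Case 1 excludes the latter directly; for the former, the flip/regular dichotomy of the preceding proposition shows that an isolated periodic orbit on $\partial A$ would entail, in the regular case, a Cantor set accompanying it, and in the flip case would appear only in a non-attractor-boundary position. Ruling out both possibilities yields $N_p=A$.

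For Case 2, the existence of a repelling Cantor set $C$ with $A\cap C\neq\emptyset$ forces $A\cap C$ to be a periodic orbit $\gamma$ on $\partial A$: indeed, $C$ is closed, forward-invariant, and by its construction in the preceding proposition consists of points that never fall into the interior of the trapping region supporting $A$, so the only way for $C$ to meet $A$ is at a boundary periodic orbit of the trapping region $\cT(A)$. Using $\gamma$ as a bridge, transitivity of $f$ on both $A$ and $C$ produces mutual $\epsilon$-chains connecting any point of $A$ to any point of $C$, so $A\cup C\subset N_p$. To show that $N_p$ also contains part of the basin of $A$, I would construct explicit $\epsilon$-chains landing in gaps of $C$ adjacent to $\gamma$: any point $x$ in such a gap is forward-attracted to $A$, giving one direction; for the reverse direction, the density of pre-images of $\gamma$ in these gaps plus continuity allow an $\epsilon$-jump from a point of $A$ near $\gamma$ into the gap followed by an orbit that comes arbitrarily close to $x$. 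This realises $N_p$ as a bona-fide trapping region strictly larger than $A\cup C$.

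The comparison between $\Rf$ and $\Omega_f$ then follows immediately. In Case 1, each node is either $A$ (non-wandering by transitivity) or a repelling periodic orbit or transitive Cantor set (both non-wandering by standard arguments), so $\Rf=\Omega_f$. In Case 2, the basin points newly added to $N_p$ are manifestly wandering, since their forward orbits enter $A$ and never return to a neighbourhood, giving $\Rf\supsetneq\Omega_f$. The main obstacle is expected to be the second half of Case 2, namely the precise construction of $\epsilon$-chains showing that gap points of $C$ near $\gamma$ are chain-equivalent to $A$; this requires careful use of the accumulation of pre-images of $\gamma$ on $\gamma$ itself together with the topological exactness of $f^m$ on components of $A$, and of the fact that the gaps of $C$ cluster against $\gamma$ on both sides.
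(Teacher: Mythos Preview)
Your overall architecture matches the paper's: split on whether $A$ is isolated from the rest of the non-wandering set, and in the non-isolated case identify $N_p$ with a larger trapping region. Your Case~2 plan is in fact considerably more detailed than the paper's proof, which simply asserts $N_p=K(N_{p-1})$ without constructing the $\epsilon$-chains; your outline there is sound.

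There is, however, a genuine gap in your Case~1 argument. You claim that a hypothetical $x\notin A$ chain-equivalent to $A$ would require a ``bridge'' on $\partial A$ that is either a repelling periodic orbit or a repelling Cantor set, and then invoke the flip/regular dichotomy of the preceding proposition to rule out the periodic-orbit case. This step does not work as stated. First, a periodic orbit sitting on $\partial A$ is contained in $A$ (since $A$ is closed), hence lies in $N_p$ and is not a separate repelling node at all; the flip/regular dichotomy classifies repelling nodes $N_{i+1}$ inside $\cT(N_i)$ and says nothing about boundary orbits of the attractor. Second, and more basically, $\epsilon$-chains need not pass through chain-recurrent points, so the ``bridge'' heuristic requires justification. (A minor related slip: you assert $A$ is a tight \emph{cyclic} trapping region, but $A$ need not be cyclic; e.g.\ the core of $T_s$ is cyclic only at $s=2$.)

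The clean fix, and the route the paper takes, is to bypass bridges entirely and use an absorbing-neighbourhood argument in the style of Proposition~\ref{prop:surjective}. Under the Case~1 hypothesis, every repelling node (periodic orbit or Cantor set) is a closed set disjoint from $A$, there are only finitely many of them, so all non-wandering points outside $A$ lie at positive distance from $A$. Hence a neighbourhood $U$ of $A$ contains no non-wandering point of $[a,b]\setminus A$, and since $A$ is a tight trapping region, points of $U\setminus A$ fall into $A$ in boundedly many steps. It follows that for $\epsilon$ small enough every $\epsilon$-chain started in $A$ stays within $d(\epsilon)$ of $A$ with $d(\epsilon)\to 0$, giving $N_p=A$ directly.
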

\begin{proof}\ 
\allmagenta
    Suppose first that the attractor $A$ is isolated from the rest of the non-wandering set, namely that there is an open neighborhood of $A$ that contains no other non-wandering point. 
    Then an argument of the same kind used to prove Prop.~\ref{prop:surjective} shows that, for each $x\in A$, there is no $y$ outside of $A$ such that, for every $\varepsilon>0$, there is an $\varepsilon$-chain from $x$ to $y$. 
    In particular, this means that $N_p=A$.
    
    Consider now the case when $A$ is not isolated. This can happen only under the following circumstances:  
    $A$ is a cyclic tight trapping region and 
    the periodic orbit at its boundary also belongs to a a repelling Cantor set $C$. In this case, $N_p$ is equal to $K(N_{p-1})$, that contains $A$, $C$ and part of the basin of attraction of $A$.
\end{proof}
Notice that the tent map has no Cantor repelling nodes (see Thm.~B) and so its non-wandering set always coincides with its chain-recurrent set. 
%\blue{As we show in Proposition...}
\allblack
\begin{proposition}
    \label{prop:upstream}
    If $N$ and $M$ are nodes with $N>M$, then there is an edge from $N$ to $M$. 
%    \blue{$M$ is called $N'$ in the proof. Should this and the following Theorem be moved somewhere around Prop 2.17?}
%    \red{I thought it would fit better here because is the property from which immediately follows our first main result, and also helps understanding its meaning. BTW apologies but after adding the new subsection division 2.17 changed name so I cannot find it anymore :(} \blue{It is okay here :)}
\end{proposition}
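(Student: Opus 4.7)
The plan is to split into the easy case where $M$ is the attractor and the harder case where $M$ is a repelling node. If $M=N_p$, Corollary~\ref{cor:Tunim} provides a unique attractor whose basin of attraction has full measure (its complement being a measure-zero union of the remaining repelling nodes and their countable preimages), so every neighborhood of $N$ contains points whose forward orbit converges to $N_p$; this immediately yields the edge $N\to N_p$.

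For the repelling case I would let $p_0=p_1(N)$ and assume without loss of generality $p_0<c$, so that $J_1(N)=[p_0,\hat p_0]$. Applying Proposition~\ref{prop:characterization}(2) directly to $N>M$ gives $M\subset \Jall(N)\setminus \Jall(M)$, so $p_1(M)$ lies in $(p_0,\hat p_0)\setminus\{c\}$; hence exactly one of $p_1(M),\hat p_1(M)$ is in $(p_0,c)$ and the other in $(c,\hat p_0)$, and both satisfy $f(p_1(M))=f(\hat p_1(M))\in M$ by forward invariance of $M$.

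For any $\varepsilon>0$ I would then set $K_\varepsilon=[p_0,p_0+\varepsilon]$. Since $f$ has no homtervals (the corollary to Proposition~\ref{prop:homtervals}), the argument used in the proof of Lemma~\ref{lemma:downstream}(1) produces an integer $n\geq 1$ such that $f^n(K_\varepsilon)$ covers at least one of the two half-intervals $[p_0,c]$ or $[c,\hat p_0]$; whichever side is covered contains either $p_1(M)$ itself or $\hat p_1(M)\in f^{-1}(M)$. Pulling that point back to $K_\varepsilon$ gives $x$ within $\varepsilon$ of $p_0\in N$ whose forward orbit lands in $M$; by forward invariance of $M$, $\omega_f(x)\subset M$. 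Since $\varepsilon$ is arbitrary, the edge $N\to M$ follows.

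The main technical step is verifying that the expansion of $K_\varepsilon$ reaches an entire half-interval rather than merely an interval straddling $c$. This is exactly what is done inside the proof of Lemma~\ref{lemma:downstream}(1): because no preiterate of $N$ can lie in the interior of $\Jall(N)$, the image of $K_\varepsilon$ can only stop expanding when it touches $p_0$ or $\hat p_0$, so once it contains $c$ in its interior it must contain a full half-interval up to one of these endpoints. No induction on intermediate nodes is required, since Proposition~\ref{prop:characterization}(2) already supplies the containment $M\subset\Jall(N)$ for arbitrary, not just consecutive, pairs $N>M$.
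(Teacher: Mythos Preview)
Your proof is correct and follows essentially the same idea as the paper's: expand a one-sided neighborhood $K_\varepsilon$ of $p_1(N)$ until its forward image covers one of the half-intervals $[p_0,c]$ or $[c,\hat p_0]$, then pick up a point of $M$ (or of $f^{-1}(M)$) there and pull it back. The paper does this uniformly via Lemma~\ref{lemma:downstream} without a case split on whether $M$ is attracting; your explicit treatment of which point of $M$ gets hit is if anything cleaner than the paper's somewhat elliptical passage from ``downstream'' to ``actual trajectory''.

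One caveat on your Case~1: the claim that the basin of $N_p$ has full Lebesgue measure is not established anywhere in the paper for general T-unimodal maps. Repelling nodes can be Cantor sets (Example~\ref{ex:Tu}), and nothing in Assumption~(T) forces such a set, or the set of points forward-asymptotic to it, to have measure zero. Fortunately you do not need this: your Case~2 argument already handles $M=N_p$, since $c\in N_p$ lies in \emph{both} half-intervals $[p_0,c]$ and $[c,\hat p_0]$, so whichever side $f^n(K_\varepsilon)$ covers you obtain $x\in K_\varepsilon$ with $f^n(x)=c$ and hence $\omega_f(x)=\omega_f(c)\subset N_p$. The case split and the measure claim can therefore both be dropped.
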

\begin{proof}
    By construction, the node $N$ has no common point with $\Jall(\cT(N))$. Since $M$ is closer to $c$ than $N$, on the contrary, at least one of its points lies in the interior of $\J_1(N)$ and therefore the whole $M$ lies in $\Jall(\cT(N))$.

    By Lemma.~\ref{lemma:downstream}, each point $x\in M$ is downstream from $p_1(N)$. Since $p_1(N)$ is periodic, for every $\varepsilon>0$ there is a trajectory $t_\varepsilon$ starting in $(p_1,p_1+\varepsilon)$ and falling eventually on $x$. Since $p_1$ is repelling, for each point $y$ close enough to $p_1$ there is a trajectory $t$ passing through $y$ whose backward limit is contained in $N$. 
    In other words, there are trajectories backward asymptotic to $N$ from any node inside $\cT(N)$, namely there is an edge from $N$ to any node in $\Jall(N)$.
\end{proof}
\begin{definition}
    We say that an acyclic directed \gr graph $\Gamma$ is a {\bf tower} if there is an edge between every pair of distinct nodes of $\Gamma$.
\end{definition}
\begin{theorem}
    \label{thm:tower}
%    For all $\mu\in(0,4]$, t
    The graph of any T-unimodal map is a finite tower.
\end{theorem}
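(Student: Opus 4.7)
The plan is to verify the three defining properties of a finite tower in sequence: (i) finitely many nodes, (ii) acyclicity, and (iii) a directed edge between every pair of distinct nodes. Properties (i) and (ii) require no work: finiteness is built into Assumption (T), while acyclicity of $\Gamma_f$ was noted in the Introduction as a general feature of the construction. The whole content of the theorem therefore reduces to establishing (iii).

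For any pair of distinct repelling nodes $N_i, N_j$, Prop.~\ref{prop:characterization} provides a total order on the repelling nodes; taking without loss of generality $N_i > N_j$, the same proposition yields $N_j \subset \Jall(N_i)\setminus \Jall(N_j)$. Prop.~\ref{prop:upstream} then furnishes directly a directed edge from $N_i$ to $N_j$. This disposes of every pair consisting of two repelling nodes.

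It remains only to produce an edge from each repelling $N_i$, $0\leq i<p$, to the attractor $N_p$. Iterating the nesting $\Jall(N_{k+1})\subset K(N_k)\subset \Jall(N_k)$ from Prop.~\ref{prop:cores}(3), one obtains $N_p\subset \Jall(N_i)$ for every $i<p$. Now one inspects the proof of Prop.~\ref{prop:upstream} and notes that its mechanism in fact produces an edge from $N$ to any node contained in $\Jall(N)$: it uses only (a) that points of the target node are downstream from $p_1(N)$ via Lemma~\ref{lemma:downstream}, and (b) that the repelling cycle $\gamma(\cT(N))$ supplies trajectories backward asymptotic to $N$ through each nearby point. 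Both ingredients are available with $N=N_i$ and target $N_p$, so the edge from $N_i$ to $N_p$ follows, completing the tower.

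The main obstacle, namely building up a linearly ordered sequence of nested cyclic trapping regions and turning repelling periodic boundaries into sources of backward-asymptotic trajectories, has already been absorbed into Propositions~\ref{prop:characterization}, \ref{prop:upstream}, and \ref{prop:cores}; at this point Thm~\ref{thm:tower} is essentially a bookkeeping assembly of the structural results developed earlier in the section.
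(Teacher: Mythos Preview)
Your proof is correct and follows essentially the same approach as the paper: the paper's own proof is a one-liner citing Assumption~(T) for finiteness and Prop.~\ref{prop:upstream} for the edges, while you unpack the same ingredients more explicitly. Your separate treatment of the attractor via Prop.~\ref{prop:cores}(3) is unnecessary, since Prop.~\ref{prop:upstream} as stated already applies to any pair of nodes $N>M$ (its proof requires only that $N$ be repelling, not $M$), so the edge from each $N_i$ to $N_p$ is covered directly; but the extra argument does no harm.
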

\begin{proof}
    This is an immediate consequence of Assumption (T) and Prop.~\ref{prop:upstream}.
\end{proof}
%
%Notice that the propositions above also hold when the number of nodes of $f$ are infinite. In that case of course the graph is infinite and the attractor is a Cantor set on which $f$ acts as an adding machine. When $f$ is S-unimodal, as shown in~\cite{DLY20}, the propositions above hold also when the attractor is a cycle. For general unimodal maps the theorem does not hold because {\em non-essential} attracting cycles (namely attracting cycles that do not attract $c$) might arise.

\allviolet\medskip
\begin{figure}
 \centering
 \includegraphics[width=12cm]{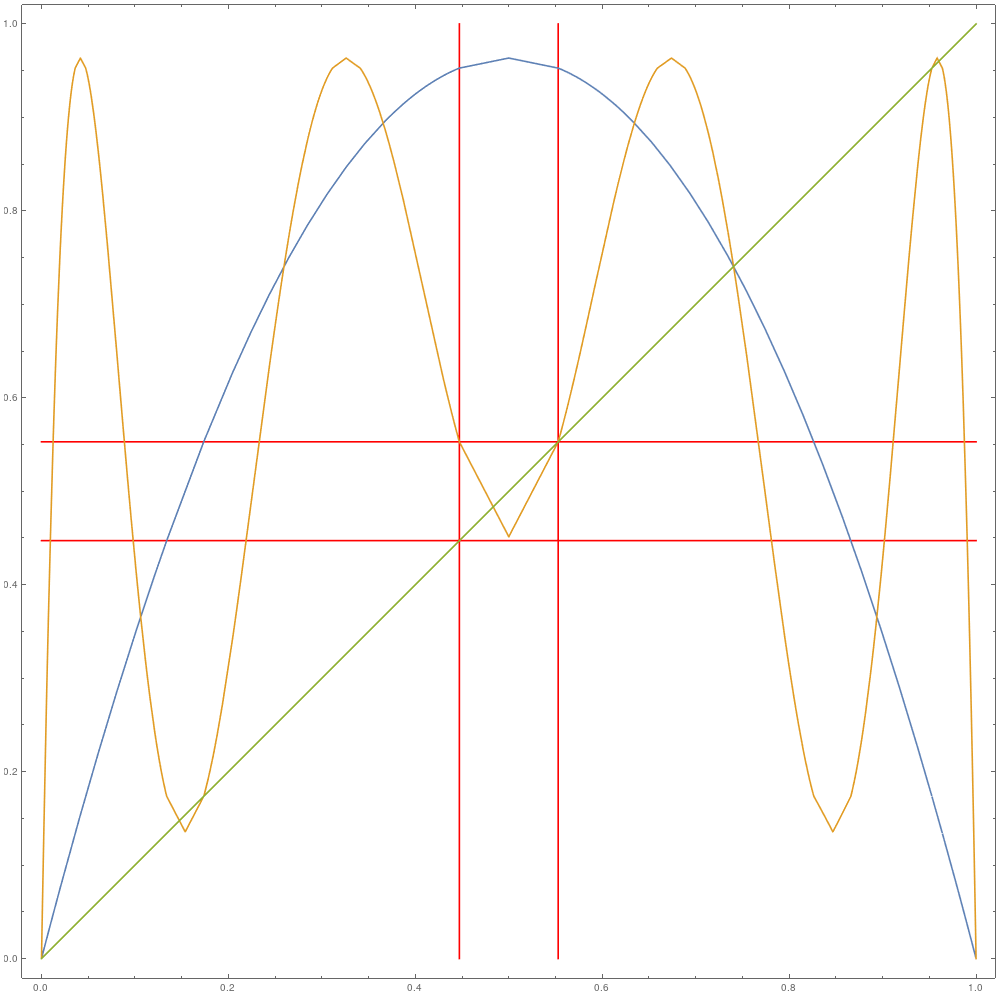}
 \caption{{\em Graph of the function $u_1$ from Ex.~\ref{ex:Tu} (blue) and its cube (orange).} 
}
 \label{fig:Tunimodal1}
\end{figure} 
\begin{figure}
 \centering
 \includegraphics[width=14cm]{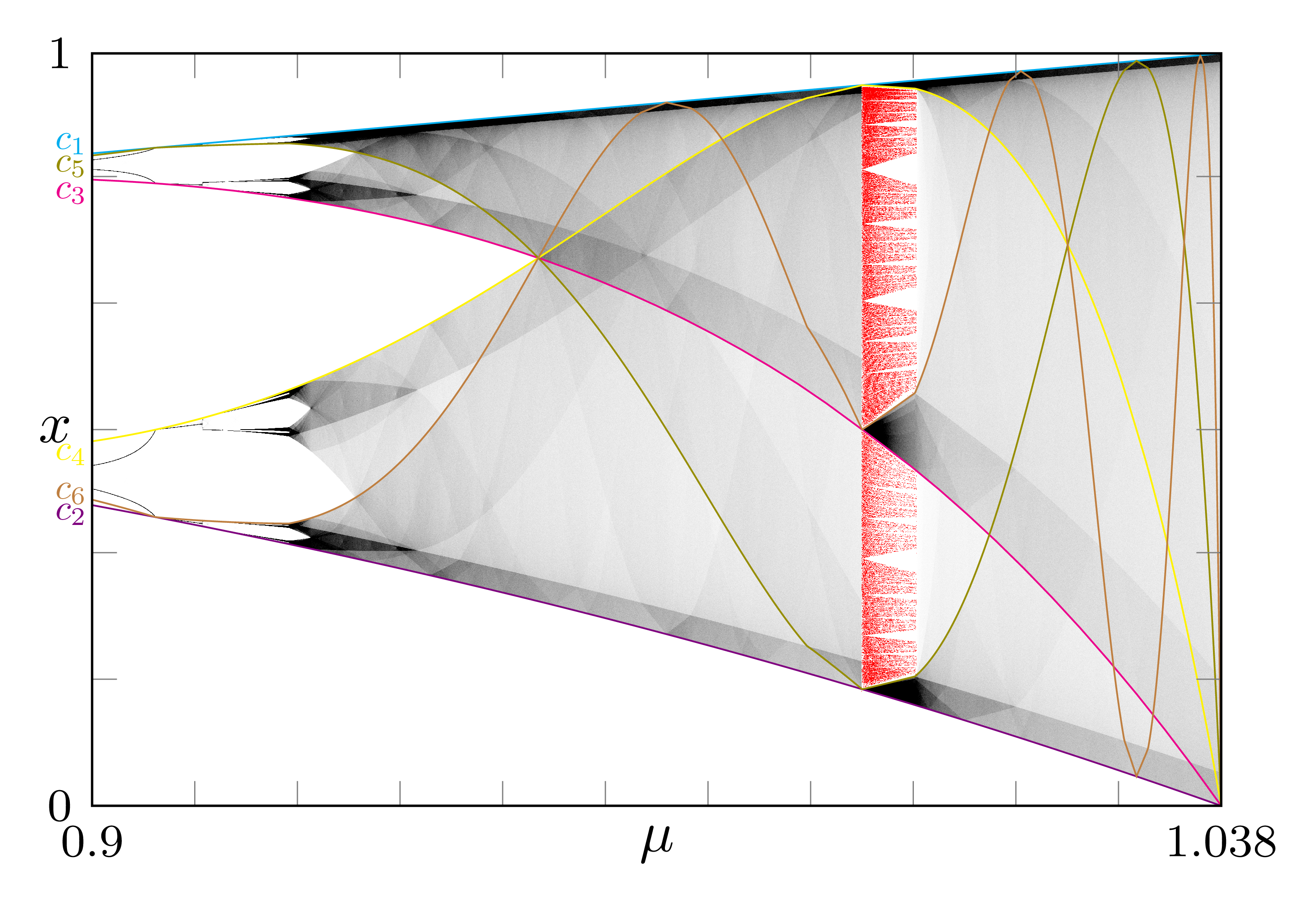}
 \caption{{\em Bifurcation diagram of a T-unimodal family.} 
 This picture shows the bifurcation diagram of the family $u_\mu(x)=\mu F(x)$, where $F(x)$ is the piecewise polynomial map in Ex.~\ref{ex:Tu}. This family leaves $[0,1]$ invariant for $\mu\in[0,4/3.854]$.
 Attractors are painted in shades of gray (depending on the density), repelling periodic orbits in green and repelling Cantor sets in red. 
 The colored lines labeled by $c_k$ are the lines $u^k_\mu(c)$, where $c = 0.5$ is the critical point of $u_\mu$. 
 This map has a single window, of period 3, for approximatively $0.994\leq\mu\leq1.001$. 
 At $\mu=1$, the repelling Cantor set coincides, by construction, with the one of the logistic map at $\mu=3.845$.
}
 \label{fig:Tunimodal2}
\end{figure} 
One can use the results above to build T-unimodal maps that are not topologically conjugated to either tent maps or S-unimodal maps. 
\begin{example}
  \label{ex:Tu}
  Let $f$ be any logistic map in the interior of the period-3 window. 
  For each such a map, the second node is a Cantor set $N$.
  The point $p_1=p(N)$ has period 3. We set $p_2=f(p_1)$ and $p_3=f(p_2)$.
  The maximal cyclic trapping region of $N$ is 
  $$
  J_1=[q_1,p_1],\;J_2=[p_2,q_2],\;J_3=[q_3,p_3],
  $$
  where 
  $$
  q_1=1-p_1,\;f(q_1)=p_2,\;f(q_2)=q_3,\;f(q_3)=q_1.
  $$
  Any unimodal map $F$ that coincides with $f$ outside $J_1\cup J_2\cup J_3$ has $N$ as its second node. 
  In particular, such $F$ cannot be topologically conjugated to a tent map.
  
  Let us now consider the following $F$. In $J_2$ and $J_3$, $F$ is linear.
  In $J_1$, $F$ is a symmetric tent map.
  This way, the attractor of $F$ is chaotic as long as $F(c)$ is chosen large enough. 
  %{\blue $F(c)$ probably has to be large enough, if it is very small the attractor would be the cycle of $q_1$, or renormalizable (as the family of tent maps for $s<1$, or $1< s\leq\sqrt 2$ respectively). Renormalization is not an issue, the attractor is still chaotic, but then it is not a period 3-window anymore, I guess.}
  Moreover, we can always choose the value of $F(c)$ so that $c$ is periodic. Then $F$ then cannot be topologically conjugated to a S-unimodal map either. 
  
  In Fig.~\ref{fig:Tunimodal2}, we show the bifurcation diagram of the family $u_\mu(x)=\mu F(x)$, where $F$ is built as described above for $f(x)=3.854x(1-x)$ (see Fig.~8 in~\cite{DLY20} for the graph of $f$ and of $f^3$ and the trapping region of $N$) and we set $F(c)=f(c)$. Arbitrarily close to $F$, there are maps whose critical point is periodic. 
\end{example}
\allblack

\medskip
In case of the tent map, we get the following more specific result.
\begin{theorem}
    \label{thm:ttower}
%    For all $\mu\in(0,4]$, t
      Let $s$ such that $\log_2 s\in [2^{-p},2^{1-p})$ for some integer $p\geq1$. Then $\Gamma_{T_s}$ is a finite tower with $p+1$ nodes $N_0,\dots,N_p$, where:
      \begin{enumerate}
        \item $N_0$ is the fixed boundary point;
        \item the subgraph $\{N_1,\dots,N_{p-1}\}$ is a $(p-1)$-cascade; 
        \item $N_p=K(N_{p-1})$, namely $N_p$ is the core of $N_{p-1}$.
      \end{enumerate}
\end{theorem}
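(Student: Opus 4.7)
The strategy is to verify that $T_s$ satisfies Assumption~(T), invoke Theorem~\ref{thm:tower} for the tower structure, and then identify each node explicitly from Theorem~B.

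First I would establish T-unimodality of $T_s$. Theorem~B(5) directly gives the absence of attracting cycles and of homtervals; combined with Proposition~\ref{prop:homtervals}, the latter rules out wandering intervals. For the third condition of~(T), Theorem~B(3) decomposes $\Omega_{T_s}$ into the fixed endpoint, a $(p-1)$-cascade of repelling periodic orbits, and the chaotic attractor $A$, that is, $p+1$ pairwise disjoint closed invariant pieces. Since the tent map has no Cantor repelling nodes (as noted after Proposition~\ref{prop:A}), $\cR_{T_s}=\Omega_{T_s}$, and each of these pieces is internally transitive, hence a single chain-recurrence class. This gives finitely many nodes and shows that~(T) holds; Theorem~\ref{thm:tower} then yields the finite tower.

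Next I would match the intrinsic ordering $N_0<\dots<N_p$ (by nesting of the intervals $J_1(N_i)$) with the explicit pieces. The endpoint is the furthest from $c$, so $N_0=\{0\}$; the attractor is innermost, and by Corollary~\ref{cor:Tunim} together with Proposition~\ref{prop:A} we get $N_p=A$. The intermediate pieces are the $p-1$ repelling periodic orbits of the cascade, whose periods I would extract from the inductive renormalization in the proof of Theorem~B(3): after $k$ renormalizations, $T_s^{\,2^k}$ on the innermost core component is conjugate to $T_{s^{2^k}}$, whose internal fixed point lifts to a period-$2^k$ orbit of $T_s$. The nesting of cores from Proposition~\ref{prop:cores}(3) coincides with the nesting of the $J_1(N_i)$, so $N_i$ is a period-$2^{i-1}$ orbit for $1\le i\le p-1$, establishing item~(2).

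Finally I would prove $N_p=K(N_{p-1})$. Proposition~\ref{prop:cores}(3) already yields $A\subseteq K(N_{p-1})$. For the reverse inclusion, Theorem~B(5) asserts that $T_s^{\,2^{p-1}}$ is topologically exact on each component of its attractor; by the renormalization used above, this restriction is conjugate to a tent map with parameter in $[\sqrt{2},2)$, whose domain is precisely one component of $K(N_{p-1})$. Topological exactness then forces each component of $K(N_{p-1})$ to be filled entirely by $A$, and equality follows. The main obstacle is the careful bookkeeping needed to align the combinatorial ordering of nodes with the concrete periods arising at each renormalization step; every remaining ingredient is a direct application of results already developed in Sections~2 and~3.
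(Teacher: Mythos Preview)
Your proposal is correct and follows essentially the same route as the paper: both arguments rest on Theorem~B for the structure of $\Omega_{T_s}=\cR_{T_s}$, the renormalization $T_s^{2}\sim T_{s^2}$ to identify the cascade inductively, and the identification of the attractor with the innermost core via topological exactness. One small caveat: Proposition~\ref{prop:cores}(3) is stated only for $k\le p-2$, so it does not literally give $A\subseteq K(N_{p-1})$; that inclusion follows instead from Proposition~\ref{prop:core} applied to each renormalized map $\bar f_i$, which makes $K(N_{p-1})$ a forward-invariant union of intervals containing $c$.
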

\begin{proof}
Recall that the non-wandering set of $T_s$ coincides with its chain-recurrent set, and its structure is given in Theorem~\ref{thm:B}. Since $p\geq 1$, the endpoint $0$ is repelling, and thus $N_0=\{0\}$, see Example~\ref{ex:tent2}. Proposition~\ref{prop:cores} implies that all other nodes are contained in $K(N_0)=[c_2,c_1]$, and are downstream from $N_0$. If $p=1$, $N_1=[c_2,c_1]$ is a chaotic attractor of period $1$.\\
If $p>1$, recall from proof of Theorem~\ref{thm:B} that the core decomposes as $[c_2,c_1]=[c_2,\pi]\cup[\pi,c_1]=J_1\cup J_2$, where $T_s(J_1)=J_2$, $T_s(J_2)=J_1$, and $T^2_s|_{J_1}$ and $T^2_s|_{J_2}$ are conjugate to $T_{s^2}|_{[0,c_1]}$ with repelling fixed point $\pi$. The rest of the proof follows inductively, since $\log_2 s^2\in [2^{-p+1},2^{2-p})$.
\end{proof}

\medskip\noindent
%\section{Backward asymptotics}
{\bf Backward asymptotics.} We now have all tools and results to find $s\alpha$-limits of points in T-unimodal maps.
\allmagenta
\begin{definition}
    A {\bf backward orbit} based at $x$ is a sequence $\{\dots,x_{-2},x_{-1},x_0\}$
    such that $f(x_{-i-1})=x_{-i}$, $i=0,1,\dots$, and $x_0=x$.
\end{definition}
\begin{theoremX}[\cite{Mal12}]
  \label{thm:trans}
  Each transitive map $f$ has a dense set of points with a dense backward orbit.
\end{theoremX}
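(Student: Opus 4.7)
The plan is to lift the problem to the inverse limit of $(X,f)$, where the dynamics becomes a homeomorphism, and then apply a standard Baire-category argument. A topologically transitive continuous map on a compact metric space is surjective (if $f(X)\subsetneq X$ then $X\setminus f(X)$ is a nonempty open set that no orbit visits after time zero, contradicting existence of a dense orbit), so the inverse limit
\[
\hat X=\{(x_0,x_{-1},x_{-2},\ldots)\in X^{\mathbb N}:f(x_{-k-1})=x_{-k}\text{ for all }k\geq 0\}
\]
with the product topology is a nonempty compact metric space; the projection $\pi_0(x_0,x_{-1},\ldots)=x_0$ is continuous and surjective; and the shift $\sigma(x_0,x_{-1},\ldots)=(f(x_0),x_0,x_{-1},\ldots)$ is a homeomorphism of $\hat X$ whose inverse is $\sigma^{-1}(x_0,x_{-1},\ldots)=(x_{-1},x_{-2},\ldots)$. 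The key identity is $\pi_0\circ\sigma^{-m}(\hat x)=x_{-m}$: a dense forward $\sigma^{-1}$-orbit in $\hat X$ projects to a dense backward $f$-orbit of its zeroth coordinate in $X$.

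The first step is to prove that transitivity of $f$ on $X$ lifts to transitivity of $\sigma$ on $\hat X$. Given two nonempty basic open cylinders
\[
U=\pi_0^{-1}(U_0)\cap\cdots\cap\pi_{-k}^{-1}(U_{-k}),\qquad V=\pi_0^{-1}(V_0)\cap\cdots\cap\pi_{-l}^{-1}(V_{-l})
\]
in $\hat X$, I would recast the realizability of each as a single forward condition on its deepest coordinate: $U$ is nonempty iff the open set $U':=\bigcap_{i=0}^k f^{-i}(U_{-k+i})$ is nonempty, and similarly one defines $V':=\bigcap_{i=0}^l f^{-i}(V_{-l+i})$. Applying transitivity of $f$ to $U'$ and $V'$ yields $y\in U'$ with $f^m(y)\in V'$ for arbitrarily large $m$; extending $y$ to a full point of $\hat X$ by choosing preimages (possible since $f$ is surjective) then produces a point of $U$ whose image under a suitable power of $\sigma$ lies in $V$. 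I expect this to be the main obstacle, because the cylinder condition in $\hat X$ mixes forward and backward constraints while transitivity of $f$ is purely forward; the trick is to anchor the description of each cylinder at its deepest coordinate.

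Once $\sigma$ (and hence $\sigma^{-1}$) is established as a transitive homeomorphism of a compact metric space, the classical Baire-category / Birkhoff transitivity argument produces a dense $G_\delta$ set $S\subset\hat X$ of points with dense forward $\sigma^{-1}$-orbit: for a countable open base $\{W_n\}$ of $\hat X$, each $\bigcup_{m\geq 0}\sigma^m(W_n)$ is open and dense by transitivity, and $S$ is their intersection. By the key identity, every $\hat x\in S$ projects to a point $x_0=\pi_0(\hat x)\in X$ whose backward orbit $\{x_{-m}\}_{m\geq 0}$ is dense in $X$. Finally, $\pi_0(S)$ is dense in $X$: for any nonempty open $V\subset X$, the set $\pi_0^{-1}(V)$ is open and, by surjectivity of $\pi_0$, nonempty, hence it meets $S$; so the set of $x\in X$ admitting a dense backward orbit is dense in $X$, as required.
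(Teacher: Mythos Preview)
The paper does not prove this statement: Theorem~\ref{thm:trans} is quoted from \cite{Mal12} and used as a black box in the proof of Theorem~\ref{thm:dense}. There is therefore nothing in the paper to compare your argument against.

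That said, your inverse-limit approach is correct and is a natural route to the result. Passing to the natural extension $\hat X$ converts backward $f$-orbits into forward $\sigma^{-1}$-orbits; since transitivity of a homeomorphism is equivalent to transitivity of its inverse, the standard Birkhoff/Baire argument on $\hat X$ produces a residual set of points whose $\sigma^{-1}$-orbit is dense, and projecting by the surjection $\pi_0$ gives a dense set in $X$ with dense backward orbits. Two small points are worth making explicit when you write this up: first, you are tacitly assuming $X$ is a compact metric space with no isolated points (which is the setting of the paper, where $X$ is a finite union of nondegenerate intervals); second, the claim that one can take $m$ \emph{arbitrarily large} in the lifting step is not the literal statement of transitivity but follows from the existence of a dense $G_\delta$ of transitive points. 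Neither point affects the validity of the argument.
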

\begin{theorem}
    \label{thm:dense}
    Let $f$ be a T-unimodal map and denote by $A$ its attractor. 
    Then each point $x\in A$ has a backward orbit dense in $A$.  
\end{theorem}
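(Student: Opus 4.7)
The plan is to exploit topological exactness of $f^k$ on each component of $A$, combined with a diagonal construction over a countable basis of open sets of $A$. By Corollary~\ref{cor:Tunim} and case~(2) of Theorem~\ref{thm:A} (case~(3) is ruled out by the finite number of nodes), $A$ decomposes as a cycle of $k$ closed intervals $A=A_1\cup\cdots\cup A_k$ with $c\in A_1$, $f(A_j)=A_{j+1}$ for $j<k$, and $f(A_k)=A_1$. By Theorem~\ref{thm:C}, applied to the innermost cyclic trapping region (which contains $A$ and has the same period $k$), $f^k$ restricted to the central interval is conjugate to a tent map $T_s$ with $s\in(\sqrt{2},2]$; under this conjugacy $A_1$ corresponds to the core of $T_s$ (and to all of $[0,1]$ when $s=2$), so Theorem~\ref{thm:B}(5) yields topological exactness of $g:=f^k|_{A_1}$. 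For $j\geq 2$ we have $c\notin A_j$, so $f^{k-j+1}\colon A_j\to A_1$ is a homeomorphism and conjugates $f^k|_{A_j}$ to $g$; hence $f^k|_{A_j}$ is topologically exact as well.

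Next I would fix $x\in A$ and a countable basis $\{V_n\}_{n\geq 1}$ of open subsets of $A$, choosing for each $n$ an index $j_n$ with $V_n\cap A_{j_n}\neq\emptyset$. The backward orbit $x_0=x,x_{-1},x_{-2},\dots$ is built inductively, arranging that some $x_{-i}$ lies in $V_n$ for each $n$. Given $x_{-M}\in A_{i_M}$, I would first append $s:=(i_M-j_n)\bmod k$ preimages lying in $A$ (these are always available because $f(A_{\ell-1})=A_\ell$ implies every point of $A_\ell$ has a preimage in $A_{\ell-1}$) so as to reach a point $y\in A_{j_n}$; then topological exactness of $f^k|_{A_{j_n}}$ yields $K\in\mathbb{N}$ and $z\in V_n\cap A_{j_n}$ with $f^{kK}(z)=y$, and I would fill in the missing $kK$ preimages in $A$ between $y$ and $z$. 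The resulting infinite backward orbit of $x$ lies in $A$ and meets every basis element $V_n$, so it is dense in $A$.

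The main technical point is transferring the topological exactness guaranteed by Theorem~\ref{thm:C} from the central component $A_1$ to all the other components $A_j$: this step is immediate because $c\notin A_j$ for $j\geq 2$ forces $f|_{A_j}$ to be a homeomorphism, and the very same observation makes it legitimate to pick each intermediate preimage inside $A$ at every step of the induction.
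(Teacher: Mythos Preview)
Your argument is correct and rests on the same key ingredient as the paper's proof---topological exactness of $f^k$ on each interval of the attractor, obtained from Theorem~\ref{thm:C} together with Theorem~\ref{thm:B}(5)---but the way you exploit it is genuinely different. The paper first invokes the external result of~\cite{Mal12} (Theorem~\ref{thm:trans}) to produce \emph{some} backward orbit $d=\{\dots,d_{-2},d_{-1},d_0\}$ dense in $A$, and then uses topological exactness to build a new backward orbit $d'$ based at the given point $x$ that $2^{-i}$--shadows $d$, inheriting its density. You bypass~\cite{Mal12} entirely: your diagonal construction over a countable basis $\{V_n\}$ manufactures the dense backward orbit from scratch, using exactness at each stage to steer into the next $V_n$. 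Your route is more self-contained and makes the role of exactness completely transparent; the paper's route is slightly shorter once one is willing to import Theorem~\ref{thm:trans}. Your remark that $f^{k-j+1}\colon A_j\to A_1$ is a homeomorphism conjugating $f^k|_{A_j}$ to $f^k|_{A_1}$ is a nice explicit justification of a point the paper handles by the phrase ``repeating the same argument on the other intervals $J_i(\cT)$''.

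Two small remarks. First, your reference to ``case~(2) of Theorem~\ref{thm:A}'' is meant to point to the Jonker--Rand classification (the unnumbered Theorem~A in Section~\ref{sec:dr}, labelled \texttt{thm:JonkerRand}), not to the $s\alpha$-limit theorem in the introduction. Second, your parenthetical claim that the innermost cyclic trapping region has the same period as $A$ is correct but deserves a word: when the attractor is of type~$A_2$ one has $A=K(N_{p-1})$, which has the same period $r_{p-1}$ as $\cT(N_{p-1})$; when $A$ is of type~$A_5$, the attractor is itself cyclic and is the innermost cyclic trapping region, so the periods again agree.
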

\begin{proof}
   Let $d=\{\dots,d_{-2},d_{-1},d_0\}$ be a  backward trajectory dense in $A$ -- such a trajectory exists because of Thm.~\ref{thm:trans}.
   Now, denote by $r$ the period of the innermost cyclic trapping region $\cT$ of $f$. By Thm.~\ref{thm:C}, $\bar f=f^r|_{J_1(\cT)}$ is topologically conjugated to a tent map with a chaotic attractor consisting of a single interval $C$.
   In particular, the restriction of $\bar f$ to $C$ is topologically exact.
   % the restriction of $f^r$
   % number of components of $A$ and recall that the restriction of $f^r$ to each component of $A$ is topologically exact (Thm.~\ref{thm:C}). \blue{Theorem~B is only about the tent family. Maybe Theorem~C? On the second thought, how does this work for the $T_{\sqrt{2}}$? There, the attractor is $[c_2,c_1]$, but the map $T_s$ is not exact on it. We would need $[c_2,\pi]$ and $[\pi,c_1]$ to formally be two components of the attractor. Perhaps only component is not the appropriate word here, maybe the images of the innermost cyclic trapping region.}
   Hence, given any two points $x,y\in C$, for every $\varepsilon>0$ we can find a $x'$ such that $|x-x'|<\varepsilon$ and $y$ lies on the orbit of $x'$ under $\bar f$.
   Take now any point $x\in C$.
   By the argument above, since $d$ is dense in the whole $A$, we can find a $d'_0$ closer than than $1/2$ to $d_0$ and whose orbit passes through $x$. 
   Then we can find a $d'_{-1}$ closer than $1/2^2$ to $d_{-1}$ whose orbit passes through $d'_0$.
   By proceeding this way, we obtain a backward orbit  $d'=\{\dots,d'_{-2},d'_{-1},d'_0,x\}$ under $\bar f$ based at $x$ such that $|d'_{-i}-d_{-i}|<2^{-i}$. Hence $d'$ is dense in $C$ and, therefore, the backward orbit of $x$ under $f$ is dense in $A$.
   By repeating the same argument on the other intervals $J_i(\cT)$, we get the claim.
\end{proof}
% %
% \begin{proposition}
%   The following properties hold for each $k=0,1,\dots,p-1$:
%   %
%   \begin{enumerate}
%   \item $K(\bar f_{r_k-i}) = [c_{2r_k-i-1},c_{r_k-i-1}]$;
%   \item $K(N_k)\subset\Jall(N_k)$;
%   \item for $k\leq p-2$, $\cK(N_k)$ is a trapping region and $\Jall(N_{k+1})\subset K(N_k)$.
%   \end{enumerate}
% \end{proposition}
% %
% \begin{proof}
% \red{Should we just say 'look at the proof of Prop~5 in~\cite{DL22}?}   
% \end{proof}
%
\allblack
\begin{definition}
  We define sets $U_1,\dots,U_p$ as follows:
  \begin{itemize}
  \item $U_p=N_p$;
  \item $U_{p-1}=K(N_{p-2})\setminus N_p$;
    \item $U_{k}=K(N_{k-1})\setminus K(N_{k})$ for $1\leq k<p-1$;
     
  \end{itemize}
  \noindent
  Finally, we set $U_0=[a,c_2)$ and $U_{-1}=(c_1,b]$. 
  \teal{For short, we say that $x$ is a level-$k$ point if $x\in U_k$.}
\end{definition}
The proof of the following proposition is a special case of the one of Prop.~6 in~\cite{DL22}.
\begin{proposition}
  The sets $U_{-1},\dots,U_p$ satisfy the following properties:
  \begin{enumerate}
  \item $[a,b]=\bigsqcup_{i=-1}^pU_i$;
  \item $N_k\subset U_k$ for $k=0,\dots,p$;
  \item $U_k\subset\Jall(N_{k-1})$ for $k=1,\dots,p$.
  \end{enumerate}
\end{proposition}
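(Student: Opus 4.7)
The plan is to reduce the proposition to the observation that the $U_i$ are the annular regions left over when the nested chain of cores is peeled off $[a,b]$. Concretely, I would first establish
\[
[a,b] \supset K(N_0) \supset K(N_1) \supset \dots \supset K(N_{p-1}) \supset N_p.
\]
The inclusion $K(N_{k-1}) \supset K(N_k)$ for $1 \leq k \leq p-1$ follows from Prop.~\ref{prop:cores}(2)--(3), which together give $K(N_k) \subset \Jall(N_k) \subset K(N_{k-1})$. The terminal inclusion $N_p \subset K(N_{p-1})$ follows from Prop.~\ref{prop:A}: in case ($A_2$), $N_p$ is the attractor and the argument inside the proof of Prop.~\ref{prop:cores} already shows it must lie in $K(N_{p-1})$; in case ($A_5$), $N_p = K(N_{p-1})$ outright.

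Once the chain is established, point 1 is essentially bookkeeping. Since $K(N_0) = [c_2,c_1]$ (by Prop.~\ref{prop:surjective} applied to $f$), we have $[a,b]\setminus K(N_0) = U_0 \sqcup U_{-1}$, and subtracting the nested cores one at a time gives
\[
[a,b] = U_{-1} \sqcup U_0 \sqcup U_1 \sqcup \dots \sqcup U_{p-2} \sqcup \bigl(K(N_{p-2})\setminus N_p\bigr) \sqcup N_p = \bigsqcup_{i=-1}^p U_i,
\]
with disjointness automatic from strict nesting. Point 3 is also immediate from the same inclusions: for $1 \leq k \leq p-1$, $U_k \subset K(N_{k-1}) \subset \Jall(N_{k-1})$ by Prop.~\ref{prop:cores}(2); and for $k = p$, $U_p = N_p \subset K(N_{p-1}) \subset \Jall(N_{p-1})$.

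The only real work is point 2, and within it the key step is to show $N_k \subset K(N_{k-1})$ for $1 \leq k \leq p-1$ (so that $N_k$ lands in the correct annulus). My approach is to renormalize: by Prop.~\ref{prop:unimodal}, the map $\bar f = f^{r_{k-1}}|_{J_1(N_{k-1})}$ is T-unimodal with repelling boundary fixed point $p_1(N_{k-1}) \in N_{k-1}$. Applying Prop.~\ref{prop:surjective}(1) to $\bar f$ shows that the only chain-recurrent point of $\bar f$ outside its core $K(\bar f_1)$ is this boundary fixed point. Since $N_k \cap J_1(N_{k-1})$ consists of chain-recurrent points of $f$ (hence of $\bar f$) lying in a node disjoint from $N_{k-1}$, it must be contained in $K(\bar f_1)$; transporting this by the homeomorphisms $f^{i-1}: J_1(N_{k-1}) \to J_i(N_{k-1})$ (which conjugate $\bar f_1$ to $\bar f_i$) then yields $N_k \cap J_i(N_{k-1}) \subset K(\bar f_i)$ for each $i$, and therefore $N_k \subset K(N_{k-1})$. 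The complementary disjointness $N_k \cap K(N_k) = \emptyset$ follows from Prop.~\ref{prop:cores}(2) together with Prop.~\ref{prop:characterization}(2), which give $K(N_k) \subset \Jall(N_k)$ and $N_k \cap \Jall(N_k) = \emptyset$; for $k = p-1$ one instead uses that distinct nodes are disjoint. The boundary cases $k = 0$ (trivial from $N_0 = \{a\}$ and Prop.~\ref{prop:surjective}(2)) and $k = p$ (trivial since $U_p = N_p$ by definition) complete the argument.
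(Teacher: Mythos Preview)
Your argument is correct and, for points 1 and 3, mirrors the paper's exactly: both set up the nested chain $K(N_0)\supset K(N_1)\supset\dots\supset K(N_{p-1})\supset N_p$ via Prop.~\ref{prop:cores}(2)--(3), telescope to get the partition, and read off $U_k\subset K(N_{k-1})\subset\Jall(N_{k-1})$.

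For point 2 you are actually more thorough than the paper. The paper's proof is a pigeonhole: from $U_j\subset\Jall(N_{j-1})$ and $N_k\cap\Jall(N_k)=\emptyset$ it deduces $N_k\cap U_j=\emptyset$ for $j>k$, then asserts that the partition forces $N_k\subset U_k$. As written this does not exclude $N_k$ from $U_j$ with $j<k$; that exclusion is equivalent to $N_k\subset K(N_{k-1})$, which the paper never isolates (it defers to~\cite{DL22}). Your renormalization via Prop.~\ref{prop:unimodal} and Prop.~\ref{prop:surjective}(1) supplies exactly this missing inclusion and makes the argument self-contained. One small refinement: Prop.~\ref{prop:surjective}(1) only rules out chain-recurrent points on the side $(\bar a,\bar c_2)$ of the core; to cover $(\bar c_1,\bar b]$ as well, note that $\bar f$ maps that interval into $[\bar a,\bar c_2)$, so any chain-recurrent point there would have to be the conjugate endpoint $\hat p_1(N_{k-1})$, which in any case lies in $N_{k-1}$, not $N_k$.
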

\begin{proof}
    1. By construction, the $U_i$ are all pairwise disjoint and, for T-unimodal maps, $p<\infty$. 
    Hence
    $$
    \bigcup_{i=1}^pU_i = (K(N_0)\setminus K(N_1)) \cup (K(N_1)\setminus K(N_2)) \cup \dots
    \cup (K(N_{p-2})\setminus N_p)\cup N_p =
    $$
    $$
    = K(N_0) = [c_2,c_1]
    $$
    and 
    $$
    \bigcup_{i=-1}^pU_i=[a,c_2)\cup[c_2,c_1]\cup(c_1,b]=[a,b].
    $$
    2,3. Since $U_k\subset K(N_{k-1})\subset \Jall(N_{k-1})$, $U_k\cap N_i=\emptyset$ for $i\leq k-1$.
    Since $U_{k+1}\subset K(N_{k})\subset \Jall(N_{k})$, $U_{k+1}\cap N_k=\emptyset$. 
    More generally, since $K(N_{k+1})\subset K(N_k)$, $U_{i}\cap N_k=\emptyset$ for $i\geq k+1$. Since every point in $[a,b]$ belongs to some $U_i$, the only possibility if that $N_k\subset U_k$ for each $k=0,\dots,p-1$.
\end{proof}
\allblack
\begin{lemma}
    Let $f$ be a T-unimodal map. Then $x\in[a,b]$ has a backward orbit asymptoting to $a$ if and only if $x\in[a,c_1]$.
\end{lemma}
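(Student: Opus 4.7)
The backward direction is immediate: $f$ attains its maximum $c_1$ at $c$ and its minimum $a$ at both endpoints, so $f([a,b])=[a,c_1]$ and every preimage of any point lies in $[a,c_1]$. If $x\notin[a,c_1]$ then $x$ has no preimage in $[a,b]$ at all, let alone an infinite backward orbit.

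For the forward direction I would proceed in three steps. First, show that $a$ is locally repelling in the sense that $f(y)>y$ for every $y$ in some one-sided neighborhood $(a,a+\delta]$. The two alternative scenarios -- $f(y)\le y$ throughout some whole $(a,a+\delta)$, or $f(y)\le y$ only on a sequence $y_n\to a^+$ interlaced with $y'_n\to a^+$ where $f(y'_n)>y'_n$ -- each contradict one of the clauses of Assumption (T). The first turns $\{a\}$ into a Milnor attractor, which is incompatible with uniqueness of the attractor from Corollary~\ref{cor:Tunim} (if $p=0$) or with $N_0$ being a repelling node (if $p\ge 1$). The second, by the intermediate value theorem, forces fixed points of $f$ to accumulate at $a$; between any two consecutive such fixed points $p<q$, the continuous function $f-\mathrm{id}$ has constant sign, which makes one of the two endpoints attracting from that side and hence produces an attracting cycle. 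Finally I shrink $\delta$ so that $a$ is the only fixed point of $f|_{[a,a+\delta]}$.

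Second, setting $g\bydef(f|_{[a,c]})^{-1}\colon[a,c_1]\to[a,c]$, the inequality $f(y)>y$ on $(a,a+\delta]$ translates to $g(z)<z$ on $(a,a+\delta]$. Hence $g$ maps $[a,a+\delta]$ into itself and the monotone decreasing sequence $g^n(z)$ converges to the only fixed point of $g$ in $[a,a+\delta]$, namely $a$. Third, since $f$ is T-unimodal it has no homtervals (Corollary following Proposition~\ref{prop:homtervals}), so $[a,a+\delta]$ is not a homterval and $c$ lies in the interior of $f^N([a,a+\delta])$ for some smallest $N\ge 0$. Since $a\in f^k([a,a+\delta])$ for every $k$ (as $f(a)=a$), this forces $f^N([a,a+\delta])=[a,M_N]$ with $M_N>c$, and then
$$f^{N+1}([a,a+\delta])\supseteq f([a,c])=[a,c_1].$$
Combined with $f([a,b])\subseteq[a,c_1]$, this gives equality. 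Given $x\in[a,c_1]$, I pick $y\in[a,a+\delta]$ with $f^{N+1}(y)=x$ and define the backward orbit by
$$x_0=x,\ x_{-k}=f^{N+1-k}(y)\text{ for }1\le k\le N+1,\ x_{-(N+1+k)}=g^k(y)\text{ for }k\ge 0.$$
For $n\ge N+1$ one has $x_{-n}=g^{n-N-1}(y)\to a$ by the second step, so this orbit asymptotes to $a$.

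The main obstacle is the first step. For a smooth map this is just the standard fact $|f'(a)|>1$ at a repelling fixed point, but a merely continuous T-unimodal $f$ has no derivative at $a$ to exploit, and the local picture can a priori be quite irregular. The point is to extract the one-sided inequality $f>\mathrm{id}$ near $a$ from all three clauses of Assumption (T) simultaneously. Once that is in hand, the rest is straightforward monotone-dynamics for $g$ and a single application of the no-homtervals property for $f$.
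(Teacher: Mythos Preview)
Your proof is correct and follows the same route as the paper: the absence of homtervals forces every small one-sided interval $[a,a+\delta]$ to eventually cover $[a,c_1]$, so every $x\in[a,c_1]$ has preimages arbitrarily close to $a$. The paper's own proof is much terser---it stops at ``backward orbit segments getting arbitrarily close to $a$'' and simply asserts the existence of a backward orbit asymptoting to $a$---whereas your Steps~1 and~2 (establishing $f(y)>y$ on a one-sided neighbourhood of $a$, hence monotone convergence $g^n\to a$ there) supply the justification, glossed over in the paper, that the backward orbit can be chosen to genuinely \emph{converge} to $a$ rather than merely accumulate there.
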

\begin{proof}
    Consider the interval $K_\varepsilon=[a,\varepsilon]$ for any $\varepsilon>0$ small enough. Since $f$ has no homtervals, there is some integer $k>0$ such that $c\in f^k(K_\varepsilon)$ and so $f^{k+1}(K_\varepsilon)=[a,c_1]$. This means that every point in $[a,c_1]$ has a backward orbit segment getting arbitrarily close to $a$, and so a backward orbit asymptoting to $a$.
\end{proof}
\begin{lemma}
    Let $0\leq k\leq p-1$. Then $K(N_k)$ is the set of all points of $\overline{\Jall(N_{k})}$
    that have a backward trajectory asymptoting to $N_{k+1}$.
\end{lemma}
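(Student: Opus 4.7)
The plan is to prove the two inclusions of the claim separately.

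For the direction $\{x\in\overline{\Jall(N_k)}:x\text{ has a backward orbit asymptoting to }N_{k+1}\}\subseteq K(N_k)$, I use the forward-invariance of $K(N_k)$ (it is a trapping region, Prop.~\ref{prop:cores}(3)). If $x_0\notin K(N_k)$ then every preimage of $x_0$ lies outside $K(N_k)$, and inductively so does every element of any backward orbit $(x_{-n})_{n\geq 0}$. The $\alpha$-limit $L$ of $(x_{-n})$ is closed, $f$-invariant, and (by the usual $\varepsilon$-chain construction) contained in $\Rf=\bigsqcup_j N_j$. The nesting $K(N_k)\subseteq K(N_j)$ for $j\leq k$ combined with $N_j\cap K(N_j)=\emptyset$ (from Prop.~\ref{prop:cores}(2) and Prop.~\ref{prop:characterization}(2)) gives $N_j\cap K(N_k)=\emptyset$ for $j\leq k$; the repelling nodes $N_j$ with $k+1\leq j<p$ are disjoint from the orbit of $c$ and hence from $\partial K(N_k)$; finally, if $L$ contained a point of $N_p\cap\partial K(N_k)$ (necessarily of the form $c_m$), then $f$-invariance of $L$ would pull in the entire forward orbit of $c_m$ and its $\omega$-limit $A$, forcing $c\in L$; but $c\in\mathrm{int}(K(N_k))$, contradicting $L\subseteq\overline{K(N_k)^c}$. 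Thus $L\subseteq\bigsqcup_{j\leq k}N_j$ is disjoint from $N_{k+1}$, which gives $\liminf_n d(x_{-n},N_{k+1})>0$, contradicting the asymptoting hypothesis.

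For the reverse inclusion, given $x\in K(N_k)$ I first reduce to $x\in K(\bar f_1)$: since $f|_{K(\bar f_i)}\colon K(\bar f_i)\to K(\bar f_{i+1})$ is a homeomorphism for $i\neq 1$, finitely many preimages cycle $x$ back into $K(\bar f_1)$, a finite prefix I can later prepend to the constructed orbit. For $x\in K(\bar f_1)$, I build the backward orbit in two stages. \emph{Local.} Since $p_1(N_{k+1})$ is a repelling periodic point of period $r_{k+1}$ under $f$, the branch of $(f^{r_{k+1}})^{-1}$ fixing it strictly contracts a small one-sided neighborhood $U\subset J_1(N_{k+1})$ onto $p_1(N_{k+1})$; hence any $y\in U$ admits a backward $f$-orbit converging to $p_1(N_{k+1})\in N_{k+1}$. \emph{Global.} I claim $f^N(U)\supseteq K(\bar f_1)$ for some $N\geq 0$; given this, there is $y\in U$ with $f^N(y)=x$, and concatenating the reverse segment $f^N(y)=x,f^{N-1}(y),\dots,f(y),y$ with the local backward tail of $y$ yields a backward $f$-orbit of $x$ asymptoting to $p_1(N_{k+1})\subset N_{k+1}$.

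The main obstacle is the global step $f^N(U)\supseteq K(\bar f_1)$. Absence of homtervals (Cor.\ to Prop.~\ref{prop:homtervals}) forces $c\in f^m(U)$ for some $m$, hence $c_1\in f^{m+1}(U)$; iterating the forward map and tracking the descending lower endpoint of $f^n(U)$, one must eventually reach $c_{2r_k}$, and by Prop.~\ref{prop:cores}(1) this produces $f^n(U)\supseteq[c_{2r_k},c_{r_k}]=K(\bar f_1)$. The cleanest way to close this finite-time covering is via Thm.~C: the innermost renormalization of $f$ is topologically conjugate to a topologically exact tent map, and lifting this exactness through the finite renormalization tower guarantees that the forward iterates of any open $U\subset J_1(N_{k+1})$ cover $K(\bar f_1)$ in finitely many steps.
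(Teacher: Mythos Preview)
Your first inclusion is essentially correct and close in spirit to the paper's argument, though the paper renormalizes to $\bar f = f^{r_k}|_{J_1(N_k)}$ first and then simply observes that backward orbits cannot enter the forward-invariant core $K(\overline N_0)=[\bar c_2,\bar c_1]$ from outside. Your detour through $N_p\cap\partial K(N_k)$ is unnecessary (and the step ``forcing $c\in L$'' does not follow, since $c$ need not be recurrent): it suffices to note that $N_{k+1}$ is compact, contained in $K(N_k)$, and disjoint from $\partial K(N_k)$ (the latter consists of iterates of $c$, hence lies in $N_p$), so $N_{k+1}$ sits at positive distance from the complement of $K(N_k)$.

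The second inclusion is where your proposal diverges from the paper and where there is a genuine gap. The paper does not funnel everything through a single repelling periodic point; it renormalizes and then splits according to whether the node $\overline N_1$ (the image of $N_{k+1}$ under renormalization) is a periodic orbit (flip case) or a Cantor set (regular case). In the flip case it applies the preceding lemma to $\bar f^{\,2}$ on each of the two touching subintervals $\bar J_1,\bar J_2$; in the Cantor case it uses that every point of $K(\overline N_0)$ is either in the Cantor set $\overline N_1$ (where the subshift structure gives a backward-dense orbit) or lies in some $\bar f^{-r}(J_1(\overline N_1))$, whose deeper preimages shrink into $\overline N_1$.

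Your global step, by contrast, asserts $f^N(U)\supseteq K(\bar f_1)$ for a one-sided neighborhood $U\subset J_1(N_{k+1})$ of $p_1(N_{k+1})$. This is false in general: since $U\subset J_1(N_{k+1})$ and $\cT(N_{k+1})$ is a trapping region, one has $f^N(U)\subset\bigcup_i J_i(N_{k+1})$ for every $N$. When $N_{k+1}$ is a Cantor set (regular cyclic trapping region of period at least $3$), the intervals $J_i(N_{k+1})$ lying in $J_1(N_k)$ are pairwise disjoint, and the single interval $K(\bar f_1)$ strictly contains their gaps --- indeed it contains the Cantor set $N_{k+1}\cap J_1(N_k)$, which lives outside $\Jall(N_{k+1})$. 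Hence no iterate of $U$ can cover $K(\bar f_1)$. Your appeal to Thm.~C does not help: that theorem yields topological exactness only on the \emph{innermost} renormalization (the attractor), and exactness does not lift to the coarser level $K(\bar f_1)$ when there are intermediate nodes $N_{k+1},\dots,N_{p-1}$. The Cantor-set case genuinely requires a different mechanism: backward orbits must be allowed to accumulate on the whole node $N_{k+1}$, not merely on the single periodic point $p_1(N_{k+1})$.
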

\begin{proof}
  Let $r$ be the period of $\cT(N_k)$ and set $\bar f = f^{r}$. 
  Then each $J_i(N_k)$ is forward-invariant under $\bar f$. 
  After restricting $\bar f$ to each of the $J_i(N_k)$, we reduce to the following problem: given a T-unimodal map $\bar f$ with domain $[\bar a,\bar b]$ and at least two repelling nodes $\overline{N}_0=\{\bar a\}$ and $\overline{N}_1$,
  %=\{\bar p_1\}$, \blue{How do we know we can assume $\overline{N}_1$ to be a single point? Oh, I see later you don't assume that, so it is probably enough to remove the assumption here.} 
  determine which points have a backward orbit asymptoting to $\overline{N}_1$.
  
  Recall that $\overline{N}_1\subset K(\overline{N}_0)$. 
  Since $K(\overline{N}_0)=[\bar c_2,\bar c_1]$ is a trapping region, there cannot be any  backward orbit $\{\dots,x_{-2},x_{-1},x_0\}$ such that $x_{-i}\not\in K(\overline{N}_0)$ and $x_{-i-1}\in K(\overline{N}_0)$, since $f(x_{-i-1})=x_{-i}$ and so there would be a forward orbit getting out of $K(\overline{N}_0)$. 
  Hence, only points in $K(\overline{N}_0)$ can asymptote backward to $\overline{N}_1$. 
  
  Assume first that $\overline{N}_1$ is a fixed point and
  consider the cyclic trapping region $\cT(\overline{N}_1)=\{\bar J_1=[\bar q_1,\bar p_1],\bar J_2=[\bar p_1,\bar q_2]\}$. 
  By the Lemma above, applied to the restrictions of $\bar f^2$ to $\bar J_1$, a point $x\in \bar J_1$ has an orbit backward asymptotic to $\bar p_1$ if and only if $x\in[\bar f^2(c),\bar p_1]=[\bar c_2,\bar p_1]$. 
  In turn, since $\bar f:\bar J_2\to\bar J_1$ is a homeomorphism, this means that $x\in\bar J_2$ has an orbit backward asymptotic to $\bar p_1$ if and only if $x\in [\bar p_1,\bar c_1]$. 
  Ultimately, this shows that every $x\in K(\overline{N}_0)$ %\blue{Should this be $K(\overline{N}_0)$?} 
  has an orbit backward asymptotic to $\overline{N}_1$.

  Assume now that $\overline{N}_1$ is a Cantor set. 
  Then a point $x\in K(\overline{N}_0)$ is either in $\overline{N}_1$ or there is some integer $r$ such that $x\in\bar f^{-r}(J_1(\overline{N}_1))$.
  In the first case, the action of $\bar f$ on $\overline{N}_1$ is a subshift of finite type and each point of a subshift of finite type has a backward dense orbit (see Thm.~2 in~\cite{DL22}). 
  In the second case, recall that, by construction, $\overline{N}_1$ is the complement in $K(\overline{N}_0)$ of all counterimages of $J_1(\overline{N}_1)$.
  Hence, for any integer $r>0$, every point belonging to $\bar f^{-r}(J_1(\overline{N}_1))\cap K(\overline{N}_0)$ has orbits backward asymptotic to $\overline{N}_1$.

%   Since $K(N_{k})\subset K(N_{k'})$ for $k'\leq k$, then every point of $K(N_{k})$ has also a backward trajectory asymptoting to $N_{k'+1}$ for $k'\leq k$. 
%   Since $U_k\subset K(N_{k-1})$, then
%   $s\alpha_f(x)\supset\cup_{i=0}^kN_i$ for every $x\in U_k$. 
%   Moreover, since
%   $U_k\cap\Jall(N_{k+1})=\emptyset$, then no point of $U_k$ can asymptote to nodes with $k'>k$.
\end{proof}
%

%\blue{Perhaps it would be good to state this as a main result in the introduction also.} \red{Absolutely!}

\begin{definition}[\cite{Her92}]
  Given a discrete dynamical system $f$ on $X$, the {\em special $\alpha$-limit} $s\alpha_f(x)$ of a point $x\in X$ is the union of all limit points of all backward trajectories under $f$ based at $x$.
\end{definition}

\allblack
\begin{theorem}[{\bf $\bm{s\alpha}$-limits of T-unimodal maps}]
  \label{thm:main}
  Let $f$ be a T-unimodal map with attractor $A$ and $p+1$ nodes $N_0,\dots,N_p$. 
  If $x$ is a level-$k$ point, $k<p$, then $$s\alpha_f(x)=\bigcup\limits_{i=0}^kN_i.$$
  \teal{If $N_p$ is not of type $A_5$ and $x$ is a level-$p$ point, then}
  $$s\alpha_f(x)=\bigcup\limits_{i=0}^pN_i=\Omega_f=\Rf.$$ 
  If $N_p$ is of type $A_5$ and $C$ is the repelling Cantor set in $N_p$, then:
  \begin{enumerate}
      \item $s\alpha_f(x)=\bigcup\limits_{i=0}^{p-1}N_i\cup C\cup A=\Omega_f\subsetneq\Rf$ for every level-$p$ point $x$ in $A$;
      \item $s\alpha_f(x)=\bigcup\limits_{i=0}^{p-1}N_i\cup C$ for every other level-$p$ point $x$.
  \end{enumerate}
%  for every point $x\in A$ 
%  In particular, $s\alpha(x)=\Omega_f=\Rf$ iff $x\in A$.
\end{theorem}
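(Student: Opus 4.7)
I would establish the two inclusions $s\alpha_f(x)\subseteq\bigcup_{i=0}^kN_i$ and $s\alpha_f(x)\supseteq\bigcup_{i=0}^kN_i$ separately, then handle the type $A_5$ correction. For the upper bound, the key fact is that $K(N_k)$ is forward-invariant by $f$ (Prop.~\ref{prop:cores}) and contains all nodes $N_i$ with $i>k$ via $N_i\subset\Jall(N_{k+1})\subset K(N_k)$. Since $x\in U_k\subset[a,b]\setminus K(N_k)$ for $1\leq k<p-1$, every backward orbit $\{x_{-n}\}$ based at $x$ must avoid $K(N_k)$: otherwise some $x_{-n}\in K(N_k)$ would force $x=f^n(x_{-n})\in K(N_k)$. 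Any accumulation point $y$ of $\{x_{-n}\}$ is chain-recurrent (splicing two close returns yields $\varepsilon$-chains from $y$ to itself), so $y\in\Rf\setminus K(N_k)\subseteq\bigcup_{i=0}^kN_i$. The cases $k=-1,0,p-1,p$ need small adjustments: for $k=-1$ there are simply no preimages of $x\in(c_1,b]$ at all, and for $k=p-1$ I would instead use that $N_p$ itself is forward-invariant and that $x\in U_{p-1}$ satisfies $x\notin N_p$, so backward orbits stay out of $N_p$ (in particular out of $A$).

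\textbf{Lower bound.} I would proceed by induction on $k$, starting from $k=0$. The base case uses the earlier lemma that every point of $[a,c_1]\supset U_0$ admits a backward orbit converging to $a=N_0$. For the inductive step, given $x\in U_k\subset K(N_{k-1})$, the second key lemma (characterising $K(N_{k-1})$ as the set of points in $\overline{\Jall(N_{k-1})}$ with a backward orbit asymptoting to $N_k$) provides a backward segment $\{x_{-n}\}_{n=0}^M$ whose tail $x_{-M}$ lies arbitrarily close to $N_k$, putting $N_k$ into $s\alpha_f(x)$. To reach the lower-index nodes I would extend the segment by choosing a preimage branch of $f^{r_{k-1}}$ that lands in $K(N_{k-2})\setminus K(N_{k-1})\subseteq U_{k-1}$ rather than staying inside $\Jall(N_k)$; such a branch exists because, by Lemma~\ref{lemma:notsure}, $f^{r_{k-1}}|_{J_1(N_{k-1})}$ is not surjective, so the fibres over points near $N_k$ contain points outside $\Jall(N_k)$ as well. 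Applying the inductive hypothesis at this extended endpoint then yields $\bigcup_{i=0}^{k-1}N_i\subseteq s\alpha_f(x)$, and splicing with the original segment closes the induction. For level-$p$ points I would precede the staircase by Thm.~\ref{thm:dense}, which supplies a backward orbit of $x$ already dense in $A$.

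\textbf{Type $A_5$ case.} When $N_p$ is not of type $A_5$, $N_p=A$ and the above gives $s\alpha_f(x)=\bigcup_{i=0}^pN_i=\Omega_f=\Rf$ directly. When $N_p$ is of type $A_5$, $N_p=K(N_{p-1})$ contains the attractor $A$, a repelling Cantor set $C$, and their common boundary periodic orbit $\gamma$. For $x\in A$, the backward orbits inside $A$ provided by Thm.~\ref{thm:dense} accumulate on $\gamma\subset C$; from there, a subshift-of-finite-type argument analogous to the one used to classify Cantor repelling nodes extends the backward orbit throughout $C$, producing the union $\bigcup_{i=0}^{p-1}N_i\cup C\cup A$. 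For level-$p$ points $x\notin A$, the forward-invariance of $A$ within $N_p$ prevents any backward orbit from entering $A$, which accounts for the absence of $A$ in $s\alpha_f(x)$ and leaves the stated union $\bigcup_{i=0}^{p-1}N_i\cup C$.

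\textbf{Main obstacle.} The hardest step is the escape construction in the inductive lower bound: one must show that, whether $N_k$ is a repelling periodic orbit or a repelling Cantor set, one can uniformly select preimage branches of $f^{r_{k-1}}$ that exit $\Jall(N_k)$ and land in $U_{k-1}$ at arbitrarily small scale, so that the spliced backward orbit truly accumulates on each intended node. In the periodic case this follows comfortably from the repelling dynamics near $p_1(N_k)$; in the Cantor case it requires combining the non-surjectivity of Lemma~\ref{lemma:notsure} with the transitivity of $f^{r_k}$ on $N_k$ and with the tower structure of Thm.~\ref{thm:tower}, and the splicing scales must be chosen with some care so that the accumulation statements survive the inductive concatenation.
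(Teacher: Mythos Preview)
Your upper-bound argument (forward invariance of $K(N_k)$ keeps every backward orbit of a level-$k$ point out of $K(N_k)$; accumulation points are non-wandering, hence chain-recurrent) and your handling of the $A_5$ case are essentially what the paper does. The divergence is in the lower bound, where you take a harder route and leave your own ``main obstacle'' unresolved.

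You argue by induction, trying to produce a single backward orbit that first approaches $N_k$ and then escapes into $U_{k-1}$ so that the inductive hypothesis applies there. The escape step is the one you flag as hardest, and your justification via Lemma~\ref{lemma:notsure} does not deliver it: non-surjectivity of $f^{r_{k-1}}|_{J_1(N_{k-1})}$ onto $J_1(N_{k-1})$ says nothing about whether the preimage of a point near $N_k$ meets $K(N_{k-2})\setminus K(N_{k-1})$. So as written the lower bound has a gap exactly where you say it does.

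The paper avoids this altogether. The cores are nested, $K(N_0)\supset K(N_1)\supset\cdots\supset K(N_{k-1})$ (Prop.~\ref{prop:cores}), so a level-$k$ point $x$ lies in every $K(N_i)$ for $i=0,\dots,k-1$. The key Lemma you cite is then applied once for each such $i$: since $x\in K(N_i)$, there is a backward orbit of $x$ asymptoting to $N_{i+1}$. Together with the base Lemma for $N_0$, this produces $k+1$ \emph{separate} backward orbits of $x$, one asymptoting to each $N_j$, $j=0,\dots,k$. Because $s\alpha_f(x)$ is by definition the union of accumulation sets over all backward orbits, this already gives $\bigcup_{i=0}^kN_i\subset s\alpha_f(x)$; no splicing, no escape construction, no induction is needed. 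Your scheme would prove something strictly stronger (a single backward orbit visiting all the $N_j$), but the theorem does not ask for that.
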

\begin{proof}
\allmagenta
  Let $x$ be a level-$k$ point for $k<p$. 
  Then $x$ belongs to $K(N_i)$ for $i=0,\dots,k-1$ and so, by the Lemma above, it has backward orbits to each $N_j$ for $j=0,\dots,k$.
  This shows that $s\alpha_f(x)=\cup_{i=0}^kN_i$ for every level-$k$ point, $k<p$.

  Consider now the case $k=p$. As showed in Thm.~\ref{thm:dense}, each point of the attractor $A\subset N_p$ has a backward trajectory dense in $A$.
  Hence, if $N_p$ is of type $A_2$,
  $s\alpha_f(x)=\cup_{i=0}^pN_i=\Omega_f$ for every level-$p$ point.

  When $N_p$ is of type $A_5$, the node contains $A$, a repelling Cantor set $C$ and part of the basin. 
  The points in the basin are wandering points and these points cannot be obtained as limit of backward trajectories.
  Recall that, in this case, the attractor is a {\em cyclic} trapping region $\{J_1,\dots,J_r\}$ and the repelling $C$ is the set obtained from $N_p$ after removing from it all counterimages
  of $J_1$.   
  Hence, if $x\in A$ then its $s\alpha$-limit contains both $A$, because each
  point in a chaotic attractor has a backward dense orbit, and $C$, because of the relation
  mentioned above between $A$ and $C$, so that $s\alpha_f(x)=\Omega_f$.
  For the same reasons, if $x\in N_p\setminus A$, then $s\alpha_f(x)=\cup_{i=0}^{p-1}N_i\cup C$.
\end{proof}
The sets $V_i$ in the claim of Thm.~\ref{thm:A} are related to the $U_j$ by the following relation: $V_i = \cup_{j=i}^p U_j$.
\section*{Acknowlegments}
This work was supported by the National Science Foundation, Grant No. DMS-1832126.
\bibliographystyle{amsplain}  
\bibliography{refs}  

\providecommand{\bysame}{\leavevmode\hbox to3em{\hrulefill}\thinspace}
\providecommand{\MR}{\relax\ifhmode\unskip\space\fi MR }
% \MRhref is called by the amsart/book/proc definition of \MR.
\providecommand{\MRhref}[2]{%
  \href{http://www.ams.org/mathscinet-getitem?mr=#1}{#2}
}
\providecommand{\href}[2]{#2}
\begin{thebibliography}{10}

\bibitem{AK01}
B.~Aulbach and B.~Kieninger, \emph{On three definitions of chaos}, Nonlinear
  Dyn. Syst. Theory \textbf{1} (2001), no.~1, 23--37.

\bibitem{Blo95}
A.M. Blokh, \emph{The “spectral” decomposition for one-dimensional maps},
  Dynamics reported, Springer, 1995, pp.~1--59.

\bibitem{BL91}
A.M. Blokh and M.Yu. Lyubich, \emph{Measurable dynamics of {S}-unimodal maps of
  the interval}, Annales scientifiques de l'Ecole normale sup{\'e}rieure,
  vol.~24, 1991, pp.~545--573.

\bibitem{Bow75}
R.~Bowen, \emph{$\omega$-limit sets for axiom {A} diffeomorphisms}, Journal of
  differential equations \textbf{18} (1975), no.~2, 333--339.

\bibitem{BB04}
K.M. Brucks and H.~Bruin, \emph{Topics from one-dimensional dynamics}, vol.~62,
  Cambridge University Press, 2004.

\bibitem{BDOET91}
K.M. Brucks, B.~Diamond, M.V. Otero-Espinar, and C.~Tresser, \emph{Dense orbits
  of critical points for the {T}ent {M}ap}, Contemp. Math \textbf{117} (1991),
  57--61.

\bibitem{Con78}
C.C. Conley, \emph{Isolated invariant sets and the morse index}, no.~38,
  American Mathematical Soc., 1978.

\bibitem{DL22}
R.~De~Leo, \emph{Backward asymptotics in {S}-unimodal maps}, International
  Journal of Bifurcations and Chaos \textbf{32} (2022), no.~6.

\bibitem{DLY20}
R.~De~Leo and J.A. Yorke, \emph{The graph of the logistic map is a tower},
  Discrete and Continuous Dynamical Systems \textbf{41} (2021), no.~11.

\bibitem{dMvS93}
W.~de~Melo and S.~van Strien, \emph{One-dimensional dynamics}, vol.~25,
  Springer Science \& Business Media, 1993.

\bibitem{Her92}
M.W. Hero, \emph{Special $\alpha$-limit points for maps of the interval},
  Proceedings of the American Mathematical Society \textbf{116} (1992), no.~4,
  1015--1022.

\bibitem{HW84}
P.~Holmes and D.~Whitley, \emph{Bifurcations of one-and two-dimensional maps},
  Philosophical Transactions of the Royal Society of London. Series A,
  Mathematical and Physical Sciences \textbf{311} (1984), no.~1515, 43--102.

\bibitem{JR80}
L.~Jonker and D.~Rand, \emph{Bifurcations in one dimension}, Inventiones
  mathematicae \textbf{62} (1980), no.~3, 347--365.

\bibitem{Mal12}
P.~Mali{\v{c}}k{\`y}, \emph{Backward orbits of transitive maps}, Journal of
  Difference Equations and Applications \textbf{18} (2012), no.~7, 1193--1203.

\bibitem{Mil85}
J.W. Milnor, \emph{On the concept of attractor}, The Theory of Chaotic
  Attractors, Springer, 1985, pp.~243--264.

\bibitem{MT88}
J.W. Milnor and W.~Thurston, \emph{On iterated maps of the interval}, Dynamical
  systems, Springer, 1988, pp.~465--563.

\bibitem{Nor95b}
D.E. Norton, \emph{The {C}onley decomposition theorem for maps: a metric
  approach}, Rikkyo Daigaku sugaku zasshi \textbf{44} (1995), no.~2, 151--173.

\bibitem{Ryb87}
K.P. Rybakowski, \emph{The homotopy index and partial differential equations},
  Springer, 1987.

\bibitem{Sma67}
S.~Smale, \emph{Differentiable dynamical systems}, Bulletin of the American
  mathematical Society \textbf{73} (1967), no.~6, 747--817.

\bibitem{UvN47}
S.~Ulam and J.~Von~Neumann, \emph{On combination of deterministic and
  stochastic processes}, The Summer Meeting in New Haven, vol.~53, 1947,
  p.~1120.

\bibitem{vS81}
S.~van Strien, \emph{On the bifurcations creating horseshoes}, Dynamical
  Systems and Turbulence, Warwick 1980, Springer, 1981, pp.~316--351.

\bibitem{vS88}
\bysame, \emph{Smooth dynamics on the interval}, New directions in dynamical
  systems \textbf{127} (1988), 57.

\end{thebibliography}

\end{document}